\theoremstyle{plain}
\newtheorem{thm}{Theorem}[section]
\newtheorem*{thm*}{Theorem}
\newtheorem{lem}{Lemma}[section] 
\newtheorem{cor}{Corollary}[section]
\newtheorem*{cor*}{Corollary}
\newtheorem{defi}{Definition}[section]
\newtheorem{rem}{Remark}
\newcommand {\R} {\mathbb{R}} \newcommand {\Z} {\mathbb{Z}}
\newcommand {\T} {\mathbb{T}} \newcommand {\N} {\mathbb{N}}
\newcommand {\p} {\partial}
\newcommand {\dt} {\partial_t}
\newcommand {\sgn} {\text{sgn}}
\newcommand{\supp}{\text{supp}}
\newcommand{\loc}{\text{loc}}
\begin{document}
\title[On mixing scales]{On geometric and analytic mixing scales: comparability
  and convergence rates for transport problems}
\author{Christian Zillinger}
\address{
Department of Mathematics,
University of Southern California,
3620 S. Vermont Avenue,
Los Angeles, CA 90089-2532, US}
\email{zillinge@usc.edu}

\begin{abstract}
  In this article we are interested in the geometric and analytic mixing scales
  of solutions to passive scalar problems.
  Here, we show that both notions are comparable after possibly removing large
  scale projections. In order to discuss our techniques in a transparent way, we
  further introduce a dyadic model problem.

  \noindent
  In a second part of our article we consider the question of sharp decay rates for
  both scales for Sobolev regular initial data when evolving under the transport
  equation and related active and passive scalar equations. Here, we show that slightly
  faster rates than the expected algebraic decay rates are optimal.
\end{abstract}
\maketitle

\tableofcontents

\section{Introduction and Main Results}
\label{sec:intro}
In this article we are interested in the mixing behavior of passive scalar
problems
\begin{align}
  \label{eq:8}
  \begin{split}
    \dt \rho + v \cdot \nabla \rho &=0, \\
    \rho|_{t=0}&=\rho_{0},
  \end{split}
\end{align}
where $v(t)$ is a given divergence-free vector field on $\R^{n}$ or $\T^{n}\times\R^{n}$. Assuming sufficient
regularity of $v$, the flow preserves all $L^{p}$ norms, i.e.
$\|\rho(t)\|_{L^{p}}=\|\rho_{0}\|_{L^{p}}$ for all $p \in [1,\infty]$ and all
$t>0$. However, if the flow is for instance ergodic and $\rho_{0}$ has mean
zero, then $\rho(t)$ weakly converges to zero as time tends to infinity and all
$L^{p}$ norms strictly decrease in this weak limit. The solution is \emph{mixed}
as $t\rightarrow \infty$.

In order to quantify this limiting behavior, one commonly considers two
different functionals:
\begin{defi}[Mixing scales; c.f. \cite{thiffeault2012using} ]
  Let $\rho: \R^{n}\rightarrow \R$ be a given measurable function. Then we call
  $\|\rho\|_{H^{-1}}$ the \emph{analytic mixing scale}.

  Furthermore, for given $r>0$, we define the \emph{geometric mixing
    functionals}
  \begin{align}
    \label{eq:7}
    \mathfrak{g}_{r}[\rho]:=\sup_{B_{R}(\xi); R \geq r} \frac{1}{|B_{R}|} \left| \int_{B_{R}(\xi)} \rho \right|.
  \end{align}
  If further $\rho \in L^{\infty}$, then for each $\kappa \in (0,1)$ we define
  the \emph{geometric mixing scale} as
  \begin{align}
    \mathcal{G}_{\kappa}[\rho]:=\inf \{r: \mathfrak{g}_{r}[\rho]\leq \kappa \|\rho\|_{L^{\infty}}\}.
  \end{align}
\end{defi}

As one of the main results of this article, we show that while both notions are
not equivalent, they are comparable in the sense that smallness of one implies
smallness of the other.
\begin{thm}[Comparison of mixing scales]
  \label{thm:continuous}
  Let $\rho \in L^{2}(\R^{n})$ and $\|\rho\|_{L^{2}}\leq 1$. Then for all
  $0<\epsilon \leq 1$ it holds that:
  \begin{enumerate}
  \item If $\|\rho\|_{H^{-1}}\leq \epsilon$ and $\rho$ is supported in $B_{1}$, then also
    $\mathfrak{g}_{\epsilon'}[\rho]\leq C \epsilon'$ for all $\epsilon'\geq
    \epsilon^{\alpha}$ and $\mathfrak{g}_{\tilde{\epsilon}}[\rho] \leq C$ for
    all $\tilde{\epsilon}\geq \epsilon^{\beta}$, where $\alpha=\frac{2}{n+2}$
    and $\beta=\frac{2}{n+4}$ depend only on the dimension.
    
    In particular, supposing additionally that $\|\rho\|_{L^{\infty}}=1$, it
    follows that
    \begin{align*}
      \mathcal{G}_{C}[\rho]&\leq \tilde{\epsilon}, \\
      \mathcal{G}_{C\epsilon'}[\rho]&\leq \epsilon',
    \end{align*}
  \item If $\mathfrak{g}_{\epsilon}[\rho]\leq \epsilon$ and $\rho$ is supported
    in a compact set $K$, then also $\|\rho\|_{H^{-1}}\leq C_{K} \epsilon$.
  \end{enumerate}
  These estimates are optimal in the powers of $\epsilon$.
\end{thm}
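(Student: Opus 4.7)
The two statements are dual and I would prove them by complementary techniques, treating the two directions in turn.

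\emph{Direction (1): from $H^{-1}$ control to ball averages.} The plan is to use the representation $\rho=-\nabla\cdot F$ with $F\in L^2(\R^n)$ and $\|F\|_{L^2}\leq\|\rho\|_{H^{-1}}\leq\epsilon$. For a ball $B_R(\xi_0)$ with $R\geq\epsilon'$ I introduce a mollified indicator $\varphi=\chi_{B_R(\xi_0)}*\phi_\delta$ at some scale $\delta\leq R$. Integration by parts against $F$ gives
\[
\left|\int\varphi\,\rho\right|=\left|\int\nabla\varphi\cdot F\right|\leq\epsilon\,\|\nabla\varphi\|_{L^2}\lesssim \epsilon\, R^{(n-1)/2}\delta^{-1/2},
\]
while the discrepancy between sharp and smoothed averages is controlled by $L^2$ duality,
\[
\left|\int(\chi_{B_R(\xi_0)}-\varphi)\,\rho\right|\leq\|\rho\|_{L^2}\|\chi_{B_R}-\varphi\|_{L^2}\lesssim R^{(n-1)/2}\delta^{1/2}.
\]
Dividing by $|B_R|$ and optimising in $\delta$ (together with the trivial $L^2$-bound $|B_R|^{-1/2}$) yields two regimes: the exponent $\alpha$ arises from demanding the right-hand side be $\lesssim\epsilon'$, and $\beta$ from demanding only that it be bounded. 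Once $\mathfrak{g}_{\epsilon'}[\rho]\leq C\epsilon'$ and $\mathfrak{g}_{\tilde\epsilon}[\rho]\leq C$ are in hand, the ``in particular'' statements on $\mathcal{G}_\kappa$ follow by unwinding the definition.

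\emph{Direction (2): from ball averages to $H^{-1}$.} I would perform a Littlewood--Paley decomposition $\rho=\sum_k P_k\rho$ and exploit the equivalence
\[
\|\rho\|_{H^{-1}}^2\sim\sum_k 2^{-2k}\|P_k\rho\|_{L^2}^2.
\]
In the low-frequency range $2^{-k}\geq\epsilon$, the projector $P_k$ is convolution against a smooth kernel essentially localised at scale $2^{-k}$, so the hypothesis $\mathfrak{g}_\epsilon[\rho]\leq\epsilon$ yields $\|P_k\rho\|_{L^\infty}\lesssim\epsilon$; combined with the compact support $K$ this gives $\|P_k\rho\|_{L^2}\lesssim_K\epsilon$. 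In the high-frequency range $2^{-k}<\epsilon$, only the trivial bound $\|P_k\rho\|_{L^2}\leq\|\rho\|_{L^2}\leq 1$ is available, but the $2^{-2k}$ weight produces a geometric series of order $\epsilon^2$. Adding the two contributions gives $\|\rho\|_{H^{-1}}\leq C_K\epsilon$.

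\emph{Sharpness.} I would exhibit two parametric families saturating the bounds in complementary regimes: oscillatory data $\rho_\epsilon(x)=e^{ix_1/\epsilon}\chi_{B_1}(x)$, for which $\|\rho\|_{H^{-1}}\sim\epsilon$ and ball-averages are of order $(\epsilon/R)^{(n+1)/2}$ via stationary phase on $\widehat{\chi_{B_1}}$, and concentrated bumps $\rho_\sigma(x)=\sigma^{-n/2}\psi(x/\sigma)$ with $\sigma\sim\epsilon$, which produce averages of order $\epsilon/R^n$. Together these rule out any improvement of the powers of $\epsilon$.

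\emph{Main obstacle.} The single most delicate point, appearing on both sides, is the passage between the sharp indicator $\chi_{B_R}$ and a smooth Fourier-type cut-off. In Direction (1) it manifests as the boundary-layer term $\|\chi_{B_R}-\varphi\|_{L^2}\sim(R^{n-1}\delta)^{1/2}$, whose scaling in $\delta$ is exactly what dictates the two exponents of $\epsilon$; in Direction (2) it manifests as the need to identify $P_k\rho$ with a ball-average at scale $2^{-k}$ uniformly in $k$, without incurring logarithmic losses or $k$-dependent constants that would corrupt the geometric summation.
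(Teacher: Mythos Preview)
Your approach to Direction~(1) does not yield the stated exponents $\alpha=\tfrac{2}{n+2}$ and $\beta=\tfrac{2}{n+4}$. After optimising $\delta=\epsilon$ your two terms combine to
\[
\frac{1}{|B_R|}\left|\int_{B_R}\rho\right|\lesssim R^{-(n+1)/2}\epsilon^{1/2},
\]
which gives thresholds $R\geq\epsilon^{1/(n+1)}$ for boundedness and $R\geq\epsilon^{1/(n+3)}$ for the estimate $\leq CR$ --- strictly weaker than the claimed $\epsilon^{2/(n+2)}$ and $\epsilon^{2/(n+4)}$ (in one dimension, $1/2$ and $1/4$ versus the optimal $2/3$ and $2/5$). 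The loss comes precisely from bounding the boundary-layer term $\int(\chi_{B_R}-\varphi)\rho$ in $L^2\times L^2$: this uses only $\|\rho\|_{L^2}\leq 1$ and effectively caps you at the $H^{1/2-}$ regularity of $\chi_{B_R}$. The paper avoids this by taking the kernel $\phi\in H^1$ from the outset and using the single duality step
\[
\|\phi_r*\rho\|_{L^\infty}\leq\|\phi_r\|_{H^1}\|\rho\|_{H^{-1}}\leq C\,r^{-n/2-1}\epsilon,
\]
with no mollification or error term; setting $r=\epsilon^t$ and solving $t(-n/2-1)+1=0$ and $t(-n/2-1)+1=t$ produces exactly the exponents $\tfrac{2}{n+2}$ and $\tfrac{2}{n+4}$. (For the sharp ball indicator $\chi_{B_1}\notin H^1$ the paper separately imposes a Fourier support condition; your mollification idea is natural there but cannot recover the $H^1$-level exponent.)

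For Direction~(2) your Littlewood--Paley route is sound in principle, but the ``main obstacle'' you flag is real and left unresolved: Littlewood--Paley kernels are sign-changing Schwartz functions, not ball indicators, so the hypothesis $\mathfrak g_\epsilon[\rho]\leq\epsilon$ does not immediately bound $\|P_k\rho\|_{L^\infty}$. The paper sidesteps this entirely with a single split $\rho=\rho_\epsilon+(\rho-\rho_\epsilon)$ where $\rho_\epsilon=|B_\epsilon|^{-1}\chi_{B_\epsilon}*\rho$: then $\|\rho_\epsilon\|_{L^\infty}=\mathfrak g_\epsilon[\rho]\leq\epsilon$ directly, compact support gives $\|\rho_\epsilon\|_{H^{-1}}\leq\|\rho_\epsilon\|_{L^2}\leq C_K\epsilon$, and the Fourier multiplier bound $|1-\widehat{\chi_{B_\epsilon}}(\xi)/|B_\epsilon||\leq C\min(1,\epsilon|\xi|)$ gives $\|\rho-\rho_\epsilon\|_{H^{-1}}\leq C\epsilon\|\rho\|_{L^2}$.

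Your sharpness examples also do not saturate the exponents: the oscillatory example $e^{ix_1/\epsilon}\chi_{B_1}$ has $\mathfrak g_R\sim(\epsilon/R)^{(n+1)/2}$, bounded already for $R\gtrsim\epsilon$, far below $\epsilon^{2/(n+2)}$; and an $L^2$-normalised bump at scale $\epsilon$ has total mass $\sim\epsilon^{n/2}$, giving averages $\sim\epsilon^{n/2}/R^n$ rather than $\epsilon/R^n$. The paper's example is a difference of rescaled indicators, $\rho=2^{j_0}1_{[0,2^{-j_0}]}-2^{j_1}1_{[0,2^{-j_1}]}$ with $j_1=\alpha j_0$, renormalised so that $\|\rho\|_{H^{-1}}\approx 2^{-j_0}$; a direct computation then shows the thresholds $\alpha=2/3$, $\alpha=2/5$ are sharp in one dimension.
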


In order to introduce our methods, we construct a dyadic \emph{Walsh-Fourier model} on
$L^{2}(\T)$ in Section \ref{sec:walsh}, where we introduce new dyadic analogues of both scales
and show them to be equivalent when restricted to appropriate subspaces $E_{j}$. In
particular, in that setting optimality of estimates is transparent.
Subsequently, we discuss the continuous case as stated in Theorem
\ref{thm:continuous} in Section \ref{sec:proofs}.

A natural question here, of course, is whether
\begin{align}
  \label{eq:6}
  \mathfrak{g}_{r}[\rho] \leq Cr
\end{align}
can be assumed in applications. Indeed, if $\rho(t)$ solves the passive scalar
problem \eqref{eq:8} and asymptotically converges weakly to a non-trivial state
$\rho_{\infty}$, then we can generally not expect better control than
\begin{align*}
  \mathfrak{g}_{r}[\rho(t)] \leq \|\rho_{\infty}\|_{L^{\infty}}.
\end{align*}
However, as we discuss in Section \ref{sec:examples}, upon removing large scale
projections (corresponding to asymptotic states) this assumption is natural and
comparability holds in the above sense.
\\

As a second part of our article, in Section \ref{sec:transport}, we consider the
evolution of the mixing scales under transport-type equations and are interested
in (sharp) upper and lower bounds on decay rates of the scales. As a first model
problem we consider the case of $\rho(t)$ evolving under the free transport
equation on $\T^{n} \times \R^{n}$:
\begin{align}
  \label{eq:9}
  \begin{split}
    \dt \rho + y \p_{x} \rho &=0 \text{ on } (0,\infty) \times \T^{n} \times \R^{n},\\
    \rho_{t=0}&=\rho_{0} \text{ on } \T^{n} \times \R^{n}.
  \end{split}
\end{align}
Here, we show that if the initial data is normalized in a Sobolev space $H^{s},
0 \leq s \leq 1$, with respect to $y$, then the at first expected decay rates of $t^{-s}$ turn out to
be slightly suboptimal and instead decay rates of $t^{-s}o(1)$ are achieved.

\begin{thm}
  \label{thm:mixing}
  In the following, let $0< s \leq 1$, $u_{0} \in L^{2}(\T^{n}; H^{s}(\R^{n}))$
  with $\int_{\T^{n}} u_{0}(x,y)dx =0$, and let
  \begin{align*}
    u(t,x,y)= u_{0}(t,x-ty,y),
  \end{align*}
  be the solution of the free transport problem.
  For $\sigma, s \in \R$ let $H^{\sigma}H^{s}= H^{\sigma}(\T^{n}; H^{s}(\R^{n}))$ denote the Hilbert space with norm
  \begin{align*}
  \|u\|_{H^{\sigma}H^{s}}^{2}= \sum_{k \in \Z^{n}} \langle k \rangle^{2\sigma} \int_{\R^{n}} \langle \eta \rangle^{2s}|\tilde{u}(k,\eta)|^{2} d\eta. 
  \end{align*}
  \begin{enumerate}
  \item There exists $C_{s}>1$ such that for all $t \geq 1$ and all initial data
    \begin{align*}
      \|u(t)\|_{L^{2}H^{-1}} \leq C t^{-s}\|u_{0}\|_{H^{-s}H^{s}}.
    \end{align*}    
  \item Let $\alpha_{j}>0$ with $\|(\alpha_{j})_{j}\|_{l^{2}}=1$. Then there
    exist $c>0$, a sequence of times $t_{j}\rightarrow \infty$ and initial data
    $u_{0}$ such that 
    \begin{align*}
      \|u(t_{j})\|_{L^{2}H^{-1}} \geq c \alpha_{j} t_{j}^{-s} \|u_{0}\|_{H^{-s}H^{s}}.
    \end{align*}
  \item There exists no non-trivial initial data $u_{0} \in L^{2}(\T^{n}; H^{s}(\R^{n}))$ such that
    \begin{align*}
      \|u(t_{j})\|_{L^{2}H^{-1}} \geq c t_{j}^{-s} \|u_{0}\|_{H^{-s}H^{s}}
    \end{align*}
    along some sequence $t_{j}\rightarrow \infty$.
  \end{enumerate}
\end{thm}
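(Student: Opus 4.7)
The plan is to transfer all three parts to the Fourier side and reduce everything to manipulations of a single scalar multiplier. Taking the joint Fourier transform in $(x,y)$ with dual variables $(k,\eta) \in \Z^{n} \times \R^{n}$, the free transport evolution reads $\tilde u(t,k,\eta) = \tilde u_{0}(k, \eta + tk)$, and the mean-zero hypothesis restricts all sums to $k \neq 0$. After a change of variables I obtain the workhorse identity
\begin{align*}
\|u(t)\|_{L^{2}H^{-1}}^{2} \;=\; \sum_{k \neq 0}\int_{\R^{n}} \langle \eta - tk\rangle^{-2}\,|\tilde u_{0}(k,\eta)|^{2}\,d\eta,
\end{align*}
to be compared against the weight $\langle k\rangle^{-2s}\langle\eta\rangle^{2s}$ defining $\|u_{0}\|_{H^{-s}H^{s}}^{2}$.

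For Part (1), I would prove the pointwise symbol bound
\begin{align*}
\langle \eta - tk\rangle^{-2} \;\leq\; C\,t^{-2s}\langle k\rangle^{-2s}\langle\eta\rangle^{2s}, \qquad k\neq 0,\ t\geq 1,\ 0 < s \leq 1,
\end{align*}
by splitting on whether $|\eta - tk|$ is comparable to $t|k|$. If $|\eta - tk|\geq \tfrac{1}{2}t|k|$, then $\langle \eta - tk\rangle^{-2}\lesssim (t|k|)^{-2} \leq (t|k|)^{-2s}$ for $t|k|\geq 1$ and $s \leq 1$, and $\langle k\rangle \asymp |k|$ for nonzero integer $k$ closes the estimate. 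In the complementary regime $|\eta|\geq \tfrac{1}{2}t|k|$, so $\langle\eta\rangle^{2s}\gtrsim (t|k|)^{2s}$ and the right-hand side is already $\gtrsim 1 \geq \langle\eta - tk\rangle^{-2}$. Integrating against $|\tilde u_{0}|^{2}$ then yields Part (1).

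For Part (2), I would concentrate Fourier mass at well-separated resonant points. Fixing $k_{0}\in \Z^{n}\setminus\{0\}$ and a rapidly growing sequence $t_{j}\to\infty$ (e.g.\ $t_{j+1}\geq 2t_{j}$), I pick bump functions $\phi_{j}$ of unit $L^{2}$ norm supported in balls of radius one around $t_{j}k_{0}$ and set $\tilde u_{0}(k_{0},\eta) := \sum_{j}\beta_{j}\phi_{j}(\eta)$ with $\beta_{j} := \alpha_{j}t_{j}^{-s}$, and $\tilde u_{0}(k,\cdot)\equiv 0$ otherwise. Since $\langle\eta\rangle\asymp t_{j}$ on $\supp\phi_{j}$, one checks $\|u_{0}\|_{H^{-s}H^{s}}^{2}\asymp \sum_{j}\beta_{j}^{2}t_{j}^{2s} = \sum_{j}\alpha_{j}^{2} = 1$, while at $t=t_{j}$ the factor $\langle\eta - t_{j}k_{0}\rangle^{-2}\asymp 1$ on $\supp\phi_{j}$, contributing $\gtrsim \beta_{j}^{2} = \alpha_{j}^{2}t_{j}^{-2s}$; the separation of the points $t_{j}k_{0}$ makes the other bumps contribute negligibly.

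For Part (3), I would apply dominated convergence to the representation above. The symbol $t^{2s}\langle\eta - tk\rangle^{-2}$ is dominated uniformly in $t\geq 1$ by $C\langle k\rangle^{-2s}\langle\eta\rangle^{2s}$ via Part (1), and this dominator is integrable against $|\tilde u_{0}|^{2}$ since $u_{0}\in L^{2}H^{s}\subset H^{-s}H^{s}$. For each fixed $(k,\eta)$ with $k\neq 0$, $\langle\eta - tk\rangle\sim t|k|$ as $t\to\infty$, so $t^{2s}\langle\eta - tk\rangle^{-2}\sim t^{2s-2}\to 0$. Dominated convergence then yields $t^{2s}\|u(t)\|_{L^{2}H^{-1}}^{2}\to 0$, which is incompatible with any lower bound of the stated form along a subsequence. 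The main obstacle is the endpoint $s=1$: there the pointwise limit of the symbol equals $|k|^{-2}$ rather than $0$, so the naive dominated-convergence argument only delivers $t^{2}\|u(t)\|_{L^{2}H^{-1}}^{2}\to\|u_{0}\|_{\dot H^{-1}L^{2}}^{2}$, and a refined argument (subtracting the limiting multiplier and applying dominated convergence to the remainder, or exploiting the strict inequality between $\|u_{0}\|_{\dot H^{-1}L^{2}}$ and $\|u_{0}\|_{H^{-1}H^{1}}$) is needed to close this case.
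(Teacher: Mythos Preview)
Your treatment of Parts (1) and (2) is correct and essentially parallel to the paper. For (1) the paper establishes the endpoints $s=0$ (unitarity) and $s=1$ (integration by parts in $y$ against a test function) and then interpolates; your pointwise symbol bound $\langle\eta-tk\rangle^{-2}\leq C\,t^{-2s}\langle k\rangle^{-2s}\langle\eta\rangle^{2s}$ is a clean direct route that avoids interpolation altogether. For (2) both you and the paper place disjoint Fourier bumps at the resonant frequencies $(k_{0},t_{j}k_{0})$; the constructions coincide.

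For Part (3) your argument is genuinely different. The paper does not use dominated convergence; instead it introduces the super-level sets $A_{C,t}=\{(k,\eta): t^{s}\langle\eta-tk\rangle^{-1}\langle\eta\rangle^{-s}\geq\sqrt{C/2}\}$, shows that a fixed fraction of the $H^{-s}H^{s}$ mass of $u_{0}$ must lie in $A_{C,t_{j}}$ for each $j$, and then argues that after passing to a subsequence these sets are pairwise disjoint, forcing $u_{0}=0$. Your dominated-convergence argument is shorter and more transparent for $0<s<1$: the symbol is uniformly dominated by the Part (1) weight and tends to zero pointwise, and that is all. What the paper's approach buys is an explicit localization of the mass that could in principle survive under weaker integrability hypotheses.

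You are right to flag the endpoint $s=1$, and in fact the paper's proof glosses over the same difficulty. For $s<1$ the sets $A_{C,t_{j}}$ lie in balls of radius $\lesssim t_{j}^{s}$ about $t_{j}k$ and are easily made disjoint; but for $s=1$ the multiplier $t\langle\eta-tk\rangle^{-1}\langle\eta\rangle^{-1}$ no longer tends to zero---its pointwise limit is $|k|^{-1}\langle\eta\rangle^{-1}$---so for small $C$ the sets $A_{C,t_{j}}$ all contain a common neighbourhood of $(e_{1},0)$ and cannot be separated. Your proposed fix via the strict inequality between $\|u_{0}\|_{\dot H^{-1}L^{2}}$ and $\|u_{0}\|_{H^{-1}H^{1}}$ runs into the same wall: concentrating $\tilde u_{0}$ at $k=e_{1}$ and $\eta$ near $0$ drives the ratio of these norms toward $\sqrt{2}$, so only sufficiently large values of $c$ are excluded. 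In short, at $s=1$ neither your approach nor the paper's establishes the claim for every $c>0$; your identification of this gap is an improvement on the paper rather than a defect in your attempt.
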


In the second statement, $t_{j}$ can always be chosen larger and more rapidly
increasing. For instance, we may chose $t_{j}=\exp(\exp(\dots \exp(j)))$ and
$\alpha_{j}=\frac{1}{j}=\ln(\ln(\dots \ln(t_{j})))$ as iterated exponentials and
logarithms. Informally stated, the theorem hence shows that algebraic decay
rates can be achieved along a subsequence up to an arbitrarily small loss.
Conversely, the third statement shows that this loss is necessary and that the
lower estimate is sharp in this sense.

We remark that in several works on (linear) inviscid damping,
\cite{Zhang2015inviscid}, \cite{CZZ17}, \cite{Zill5},
\cite{bedrossian2015inviscid} or (linear) Landau damping
\cite{bedrossian2013landau}, it is shown that perturbations to the Euler
equations or Vlasov-Poisson equations scatter to solutions of the free transport
problem as $t \rightarrow \infty$. As a corollary, we hence obtain the
optimality of the decay rates for these equations as well.

\begin{cor}
  Let $U(y)$ be Bilipschitz and $U'' \in W^{2,\infty}$ with
  $\|U''\|_{W^{2,\infty}}$ sufficiently small. Then for any $s \in (0,1)$ and
  any $\omega_{0} \in H^{s}(\T \times \R)$ the solution $\omega$ of the
  linearized Euler equations
  \begin{align*}
    \dt \omega + U(y)\p_{x} \omega - U''(y)\p_{x}\Delta^{-1} \omega &=0, \\
    \omega|_{t=0}&=\omega_{0}
  \end{align*}
  satisfies the statements of Theorem \ref{thm:mixing}.
\end{cor}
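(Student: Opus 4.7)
The plan is to reduce the corollary to Theorem \ref{thm:mixing} via the scattering theory for the linearized Euler equations developed in \cite{Zhang2015inviscid,CZZ17,Zill5,bedrossian2015inviscid}. Under the smallness assumption on $\|U''\|_{W^{2,\infty}}$, those works construct a bounded, invertible wave operator $W$ on $H^s(\T\times\R)$ such that
\begin{align*}
  \omega(t,x,y) = (W\omega_{0})(x-tU(y),y) + R(t,x,y),
\end{align*}
where $R$ solves away to zero as $t\to\infty$ at a rate strictly faster than $t^{-s}$ in the relevant weak norm.

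First I would record the precise mapping properties of $W$ and $W^{-1}$ between $H^{-s}H^{s}$ and $L^{2}H^{-1}$, together with the decay rate of $R$. Next, I would eliminate the non-trivial background by the substitution $y \mapsto U(y)$. Since $U$ is bi-Lipschitz and $U''\in W^{2,\infty}$, this substitution induces a bounded isomorphism on the Sobolev spaces $L^{2}H^{-1}$ and $H^{-s}H^{s}$ in $y$, with constants depending only on the Lipschitz and $W^{2,\infty}$ bounds on $U$ and $U^{-1}$. After this reduction, the transported part $(W\omega_{0})(x-tU(y),y)$ is exactly an instance of the free transport solution studied in Theorem \ref{thm:mixing}, with initial data $u_{0}:=W\omega_{0}$.

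With these two reductions in place, the transfer of the three statements is routine. For part (1), one applies Theorem \ref{thm:mixing}(1) to the transported profile and bounds $R$ by its faster decay; the boundedness of $W$ then converts $\|u_{0}\|_{H^{-s}H^{s}}$ into $\|\omega_{0}\|_{H^{-s}H^{s}}$. For part (2), one first selects initial data $u_{0}$ saturating the lower bound of Theorem \ref{thm:mixing}(2) along the sequence $(t_{j})$ with weights $(\alpha_{j})$, and then sets $\omega_{0}:=W^{-1}u_{0}$; the invertibility of $W$ keeps the norms comparable, while the faster decay of $R$ prevents it from cancelling the leading order lower bound. Part (3) follows by contradiction: any $\omega_{0}$ saturating the pure $t^{-s}$ bound would, via $W$, produce a non-trivial $u_{0}$ doing the same for free transport, contradicting Theorem \ref{thm:mixing}(3).

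The main obstacle I anticipate is quantitative: one must verify that the scattering remainder $R$ really decays strictly faster than $t^{-s}$ in the $L^{2}H^{-1}$ norm (and not merely in some weaker sense), since otherwise it could compete with the main term at the critical rate and destroy both the upper and lower bounds. In the cited works scattering is typically phrased in stronger norms or with an explicit rate $t^{-1}$ (or better) for the correction, so for $s<1$ one expects a gain of $t^{-(1-s)}$; confirming this, possibly via interpolation between the scattering estimate and the conserved $L^{2}$ norm, is the one technical point that has to be checked carefully. Once this strict improvement is in hand, the remainder of the argument is mechanical.
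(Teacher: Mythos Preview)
Your approach is essentially the same as the paper's: reduce to Theorem~\ref{thm:mixing} via the $H^{s}$ scattering result of \cite{Zill5} and the fact that the scattering map $\omega_{0}\mapsto W$ is an isomorphism, with the bi-Lipschitz change of variables $y\mapsto U(y)$ converting transport by $U(y)$ into free transport. The paper's three-line proof does not spell out the remainder-decay issue you flag or the change of variables, treating these as implicit in the cited scattering statement; your more careful accounting of these points is appropriate and does not depart from the paper's strategy.
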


\begin{proof}
  In \cite{Zill5}, we have shown that solutions to the linearized Euler
  equations around such a shear flow $U(y)$ scatter in $H^{s}$. That is, for any
  $\omega_{0} \in H^{s}$ there exists $W$ such that the associated solution
  $\omega$ with initial data $\omega_{0}$ satisfies
  \begin{align*}
    \omega(t,x-tU(y),y) \xrightarrow{H^{s}} W
  \end{align*}
  as $ t\rightarrow \infty$. Furthermore, the scattering map $\omega_{0}\mapsto
  W$ is a small perturbation of the identity and hence an isomorphism. Thus,
  both the decay rates and the optimality follow from Theorem \ref{thm:mixing}.
\end{proof}

Concerning more general passive scalar problems, a recent active area of
research, \cite{alberti2014exponential}, \cite{seis2017quantitative},
\cite{crippa2017cellular}, is given by the study of upper and lower bounds on
decay rates of mixing scales for solutions of \eqref{eq:8}
\begin{align*}
  \dt \rho + v \cdot \nabla \rho =0,
\end{align*}
where $v$ may be chosen arbitrarily under given constraints such as
$\|v(t)\|_{W^{1,p}}\leq 1$. Here, out comparison result allows us to obtain an
estimate of analytic mixing costs as a corollary of the results of
\cite{crippa2008estimates} on geometric mixing costs (c.f. Corollary \ref{cor:mixing_cost}).
\begin{cor}
  Let $p>1$ and $\rho|_{t=0}= 1_{[0,1/2]}(x_{2}) \in L^{1}(\T^{2})$  and suppose
  that for $\epsilon>0$ and some $0< \kappa < \frac{1}{2}$ the solution $\rho$
  of
  \begin{align*}
    \begin{split}
      \dt \rho + v\cdot \nabla \rho&=0,\\ \nabla \cdot \rho&=0.
    \end{split}
  \end{align*}
  satisfies
  \begin{align*}
    \left\|\rho|_{t=1}-\frac{1}{2}\right\|_{H^{-1}} \leq \epsilon.
  \end{align*}
  Then for $p>1$ the velocity field $v$ satisfies
  \begin{align*}
    \int_{0}^{1}\|\nabla v \|_{L^{p}} dt \geq C |\log(\epsilon)|.
  \end{align*}
\end{cor}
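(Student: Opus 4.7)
The plan is to translate the analytic mixing bound at time~$1$ into a geometric mixing bound via Theorem~\ref{thm:continuous}, and then invoke the geometric mixing cost estimate of Crippa--De Lellis \cite{crippa2008estimates}.

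First I would reduce to a mean-zero, two-valued function. Since $\rho_{0}= 1_{[0,1/2]}(x_{2})$ has mean $1/2$ on $\T^{2}$ and the divergence-free transport equation preserves the mean, $\tilde{\rho}(t):=\rho(t)-1/2$ is mean-zero, takes values in $\{\pm 1/2\}$ almost everywhere, and satisfies $\|\tilde{\rho}(t)\|_{L^{2}}=\|\tilde{\rho}(t)\|_{L^{\infty}}=1/2$ uniformly in $t$. The hypothesis then reads $\|\tilde{\rho}(1)\|_{H^{-1}}\leq \epsilon$.

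Next I would apply (a torus analogue of) Theorem~\ref{thm:continuous}(1) in dimension $n=2$, where $\alpha=\tfrac{2}{n+2}=\tfrac{1}{2}$. After rescaling so that $\|\tilde{\rho}(1)\|_{L^{2}}\leq 1$, this gives
\[
\mathfrak{g}_{\epsilon'}[\tilde{\rho}(1)]\leq C\epsilon' \qquad \text{for all } \epsilon'\geq \epsilon^{1/2}.
\]
Choosing $\epsilon'$ to be a small fixed multiple of $\kappa\|\tilde{\rho}(1)\|_{L^{\infty}}$ once $\epsilon$ is sufficiently small (the conclusion $|\log\epsilon|\leq O(1)$ being otherwise trivial), the geometric mixing scale therefore obeys $\mathcal{G}_{\kappa}[\tilde{\rho}(1)]\leq C_{\kappa}\epsilon^{1/2}$.

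Finally I would apply the Crippa--De Lellis estimate: for $p>1$ and two-valued characteristic-type initial data on $\T^{2}$, if the solution is geometrically $\kappa$-mixed at scale $\delta$ at time $1$, then $\int_{0}^{1}\|\nabla v\|_{L^{p}}\,dt\geq C_{p,\kappa}|\log\delta|$. Combining this with $\delta\lesssim \epsilon^{1/2}$ yields $\int_{0}^{1}\|\nabla v\|_{L^{p}}\,dt\geq \tfrac{1}{2}C_{p,\kappa}|\log\epsilon|$, as claimed.

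The main obstacles I anticipate are (i) transferring Theorem~\ref{thm:continuous}, which is stated on $\R^{n}$ with the support hypothesis $\supp\rho\subset B_{1}$, to the torus setting where the support of $\tilde{\rho}(1)$ is not localized — a partition-of-unity argument or a direct adaptation using the compactness of $\T^{2}$ should preserve the exponents; and (ii) reconciling the notion of geometric mixing in this paper (a supremum of averages over balls of radius at least $r$) with the notion used in \cite{crippa2008estimates} (typically phrased as a bound on the measure of unmixed regions at scale $\delta$). For the two-valued function $\tilde{\rho}(1)$ these notions are equivalent up to a harmless adjustment of the mixing parameter $\kappa$, but this bookkeeping is the only genuinely delicate step.
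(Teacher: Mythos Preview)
Your proposal is correct and follows essentially the same route as the paper: translate the $H^{-1}$ bound on $\rho(1)-\tfrac12$ into a geometric mixing bound at scale $\epsilon^{\alpha}$ via the comparison estimates, then feed this into the Crippa--De~Lellis theorem, absorbing the power $\alpha$ into the constant in front of $|\log\epsilon|$.

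The only substantive difference is that the paper invokes Theorem~\ref{thm:main} directly rather than the packaged Theorem~\ref{thm:continuous}, and this lets it be explicit about a point you identify only as obstacle~(ii): the Crippa--De~Lellis hypothesis is stated for \emph{ball} averages, i.e.\ $\phi=c\,1_{B_{1}}\notin H^{1}$, so the comparison estimate is only available with exponent $\lambda<\tfrac12$ (cf.\ Remark~\ref{rem:regularity_assumptions} and Lemma~\ref{lem:withFouriersupport}). Consequently the paper works with any $\alpha<\tfrac12$ rather than your $\alpha=\tfrac12$; this changes nothing in the logarithmic conclusion, but it means your appeal to Theorem~\ref{thm:continuous} (which assumes $\phi\in H^{1}$) should really be to Theorem~\ref{thm:main} with $\lambda<\tfrac12$. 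Your obstacle~(i) about $\R^{n}$ versus $\T^{2}$ is indeed only a bookkeeping matter, and the paper simply does not comment on it.
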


The remainder of the article is organized as follows:
\begin{itemize}
\item In Section \ref{sec:examples}, we discuss the comparability of both mixing
  scales using two prototypical examples and also discuss the role of large
  scale asymptotic profiles.
  
\item In Section \ref{sec:walsh}, we introduce a new dyadic Walsh-Fourier model of
  mixing scales. Due to improved orthogonality properties here we can establish
  our estimates in a transparent, accessible way.
  
\item Subsequently, in Section \ref{sec:proofs} we show that most properties and
  estimates persist in the continuous setting despite the loss of beneficial
  additional structure.
  
\item Finally, Section \ref{sec:transport} considers (sharp) decay rates of
  mixing scales under passive scalar problems. Here, we first establish optimal
  rates for the free transport problem and then discuss more general dynamics.
\end{itemize}

\section{Preliminaries and Prototypical Examples}
\label{sec:examples}
In \cite{lunasin2012optimal} two families of functions are constructed to
highlight the differences of the analytic mixing scale \eqref{eq:6} and the
geometric mixing functionals \eqref{eq:7}. In order to introduce our ideas, we
recall their construction and show that after removing the large scale weak
limit of the second family both notions are comparable and thus motivate our
choice of spaces and estimates.

\subsection{The Analytic Mixing Scale Controls the Geometric Mixing Scale}
\label{sec:AMtoGM}
We briefly recall the construction from \cite{lunasin2012optimal} Section IV.B.
As a building block consider the ``hat'' function
\begin{align*}
  v(x)=
  \begin{cases}
    1-|x| ,&\text{ for } |x|\leq 1, \\
    0 ,&\text{ else}.
  \end{cases}
\end{align*}
Let $\epsilon=2^{-n}$ and let $\alpha \in \N$ with $\alpha< 2^{n-1}$. We then
built an odd function $u$ on $[-1,1]$ such that for $x>0$
\begin{align*}
  u(x)=  \alpha \epsilon v\left(\frac{x}{\alpha \epsilon}\right) + \sum_{j=1}^{2^{n}-2\alpha} \epsilon \phi\left(\frac{x-(2\alpha+2j) \epsilon}{\epsilon}\right).
\end{align*}
This function is a sawtooth function with one large tooth on the interval
$(0,2\alpha \epsilon)$ and smaller teeth of width $2\epsilon$ on the remainder
of $(0,1)$. Furthermore, $u$ is Lipschitz and $\rho=u' \in \{-1,1\}$ satisfies
\begin{align*}
  \|\rho\|_{H^{-1}}^{2}=\|u\|_{L^{2}}^{2}=\epsilon^{2}\left(\frac{1}{3}-\frac{2}{3}\alpha \epsilon\right)+ \frac{2}{3}\alpha^{3} \epsilon^{3} \approx \epsilon^{2}
\end{align*}
if $\epsilon$ is small. Taking $\alpha\approx \epsilon^{-1/3}$, we thus obtain
that $\|\rho\|_{H^{-1}}\approx \epsilon$ and that $\rho=1$ on $B_{\epsilon^{2/3}}(0)$
and is hence geometrically mixed at most at scale $\epsilon^{2/3}$.
\\

Additionally, we compute that if we consider larger radii $r \geq
\epsilon^{2/5}$, then
\begin{align*}
  \frac{1}{|B_{r}|} \left| \int_{B_{r}(\xi)} \rho dx dy \right| \leq C r
\end{align*}
This example hence shows that an estimate of the type $\epsilon \leadsto
\epsilon$ is not possible, but $\epsilon \leadsto \epsilon^\alpha$ is for this
case. In Sections \ref{sec:walsh} and \ref{sec:proofs} we show that such an
estimate indeed holds for general functions and in higher dimensions and that
our exponents of $\epsilon$ are optimal. Roughly speaking, the loss of power in
$\epsilon$ here is due to the $L^{1}$ normalization of the characteristic
functions and equivalence constants of (weighted) $l^{2}$ and $l^{\infty}$ norms
on finite-dimensional vector spaces (c.f. Section \ref{sec:walsh}) and agrees
with scaling (c.f. the proof of Theorem \ref{thm:main}).

\subsection{The Geometric Mixing Scale and Weak Limits}
\label{sec:modifiedGMintro}
Consider a periodic characteristic function $u\in L^{2}(\T)$ with $\int_{\T} u
dx =\frac{1}{2}$ and for $k \in \N$ define
\begin{align*}
  \rho_{k}(x)= \sgn(x) u(k|x|)  \in L^{2}((-1,1)).
\end{align*}
We note that $\int \rho_{k} dx =0, \|\rho_{k}\|_{L^{\infty}}=1$ and that
\begin{align*}
  \rho_{k} \xrightharpoonup{L^{2}} \frac{1}{2}\sgn(x).
\end{align*}
Hence, for any given ball $B_{r}(\xi) \subset (-1,1)$,
\begin{align*}
  \frac{1}{|B_{r}|} \left| \int_{B_{r}(\xi)} \rho dx \right| \rightarrow \frac{1}{|B_{r}|} \left| \int_{B_{r}(\xi)} \frac{1}{2}\sgn(x) dx \right| \leq \frac{1}{2}
\end{align*}
as $k \rightarrow \infty$. Using the periodicity to obtain more quantitative
estimates, we obtain that for any $\kappa > \frac{1}{2}$ and any $r>0$, we can
achieve
\begin{align*}
  \frac{1}{|B_{r}|} \left| \int_{B_{r}(\xi)} \rho_{k} dx \right| \leq \kappa \|\rho_{k}\|_{L^{\infty}} 
\end{align*}
provided $k$ is sufficiently large and that thus
$\mathcal{G}_{\kappa}[\rho_{k}]\rightarrow 0$ as $k \rightarrow \infty$.

However, this fails for $\kappa < \frac{1}{2}$ and by lower semicontinuity,
\begin{align*}
  \liminf_{k\rightarrow \infty} \|u_{k}\|_{H^{-1}} \geq \|\frac{1}{2}\sgn(x)\|_{H^{-1}} >0.
\end{align*}

Hence, this at first suggests that the geometric mixing scale is distinct from
the analytic mixing scale. In view of our comparison result, Theorem \ref{thm:mixing}, we instead
suggest to interpret this example as
\begin{align*}
  \|u_{k}\|_{H^{-1}} \leq C \max(\kappa, r)
\end{align*}
and note that the lower bound on $\kappa \geq \frac{1}{2}$ here is due to large
scale structures in the weak limit. Indeed, consider the functions $v_{k}$
obtained by projecting out large scales:
\begin{align*}
  v_{k}(x)=u_{k}(x)-\frac{1}{2}\sgn(x).
\end{align*}
Then it holds that
\begin{align*}
  \|v_{k}\|_{L^{\infty}} &\geq \kappa, \\
  \frac{1}{|B_{r}|} \left| \int_{B_{r}(\xi)} \rho_{k} dx \right| &\leq \min(2\kappa, C r) \text { for } r\geq \frac{c}{k}, \\
  \|v_{k}\|_{H^{-1}} &\leq \frac{C}{k}, \\
  v_{k}&\xrightharpoonup{L^{2}} 0 \text{ as } k \rightarrow \infty.
\end{align*}
\begin{rem}
  When considering $u(t_{k})=\rho(t_{k})$ for $\rho(t)$ evolving under ergodic
  dynamics, the weak limit is given by a constant function. In that setting, we
  may assume that constant to be zero after normalization and hence, there
  without loss of generality both notions of geometric mixing coincide, i.e.
  $v_{k}=u_{k}$.
\end{rem}
More generally, we don't need to know a candidate for the weak limit (or even
have a sequence), but rather consider the following setting: \\

If $\rho \in L^{2}\cap L^{\infty}\cap H^{-1}$ is such that for all $r\geq r_{0}$
and all $x_{0} \in \R^{n}$ it holds that
\begin{align*}
  \frac{1}{|B_{r}|} \left| \int_{B_{r}(x_{0})} \rho dx \right| \leq \kappa,
\end{align*}
then we claim that (c.f. Lemma \ref{lem:H1bygm})
\begin{align*}
  \rho_{r_{0}}= \rho - \left(\frac{1}{|B_{r_{0}}|}1_{B_{r_{0}}(\xi)} * \rho\right)
\end{align*}
satisfies
\begin{align*}
  \|\rho - \rho_{r_{0}}\|_{L^{\infty}} &\leq \kappa, \\
  \frac{1}{|B_{r}|} \left| \int_{B_{r}(\xi)} \rho_{r_{0}} dx \right| &\leq Cr, \\
  \|\rho_{r_{0}}\|_{H^{-1}} &\leq C r_{0}\|\rho\|_{L^{2}},
\end{align*}
for all $r \geq r_{0}$. Here, we stress that, while
\begin{align*}
  \frac{1}{|B_{r_{0}}|}1_{B_{r_{0}}(\xi)} * \rho \rightarrow \rho
\end{align*}
as $r_{0}\downarrow 0$ for fixed $\rho$, this is much more subtle for sequences
$\rho$ depending on $r_{0}$. Indeed, letting $u_{k}(x)$ be as above, we obtain
that
\begin{align*}
  \frac{1}{|B_{\frac{1}{k}}|} 1_{B_{\frac{1}{k}}}* u_{k}(x)= \frac{1}{2}\sgn(x)
\end{align*}
for all $x$ with $\text{dist}(x,\{0,\pi,-\pi\})\geq \frac{2}{k}$.

\section{A Walsh-Fourier Model}
\label{sec:walsh}

In order to introduce our ideas and establish sharpness of estimates, we first
discuss and compare both mixing scales in a dyadic model setting. Here, we
consider averages over dyadic intervals and replace the $\sin$ basis of
$L^{2}(\T)$ by functions that are constant $+1$ or $-1$ on dyadic intervals.
This setting is known in harmonic analysis as a Walsh-Fourier setting and
associated with a ``tile'' characterization and Haar wavelet expansions,
\cite{muscalu2004p}, \cite{thiele2000quartile}, \cite{thiele2000time}. In the
following we briefly provide some definitions and statements. For a more
in-depth introduction we refer the interested reader to \cite{thiele2006wave}.
We remark that, for simplicity of notation and estimates, we here consider the
setting of $L^{2}([0,1))$ instead of $L^{2}(\R)$. The dyadic setting has the
benefit of greatly simplifying estimates due to orthogonality and allows for
explicit computations of newly introduced analogues of the mixing scales as Besov-type norms in terms of certain
$L^{2}$ bases. Hence, here it is transparent what estimates are possible and
whether they are optimal. In Section \ref{sec:proofs} we show that, with minor
modifications, these results also extend to the continuous Sobolev setting.

\subsection{Definitions, Tiles and Bases}
\label{sec:definitions_dyadic}

\begin{defi}
  Let $[0,1)$ be the half-open unit interval. Then for each $j \in \N_{+}$, we
  define the set of \emph{dyadic intervals at scale $2^{-j}$} by
  \begin{align*}
    \mathcal{D}_{j}=\{I_{k,j}:=2^{-j}[k,k+1): k \in \{0, \dots, 2^{j}-1\}\}.
  \end{align*}
  Associated with this partition of $[0,1)$, we introduce the $L^{2}$-normalized
  characteristic functions
  \begin{align*}
    \chi_{I}= \frac{1}{\sqrt{|I|}}1_{I} \in L^{2}([0,1)).
  \end{align*}
\end{defi}
We note that, if $I, I' \in \mathcal{D}_{j}$, then either $I=I'$ or the
intervals are disjoint. If the intervals are not of the same size, that is $I
\in \mathcal{D}_{j}$ and $I' \in \mathcal{D}_{j'}$ with $j \neq j'$, they are
either disjoint or one is contained in the other.

In addition to the (normalized) characteristic functions, $\chi_{I}$, the
following definition introduces a large family of oscillating $L^{2}$ normalized
functions, which we use to define (fractional) Sobolev-type spaces.

\begin{figure}[htb]
  \includegraphics[width=0.5\linewidth]{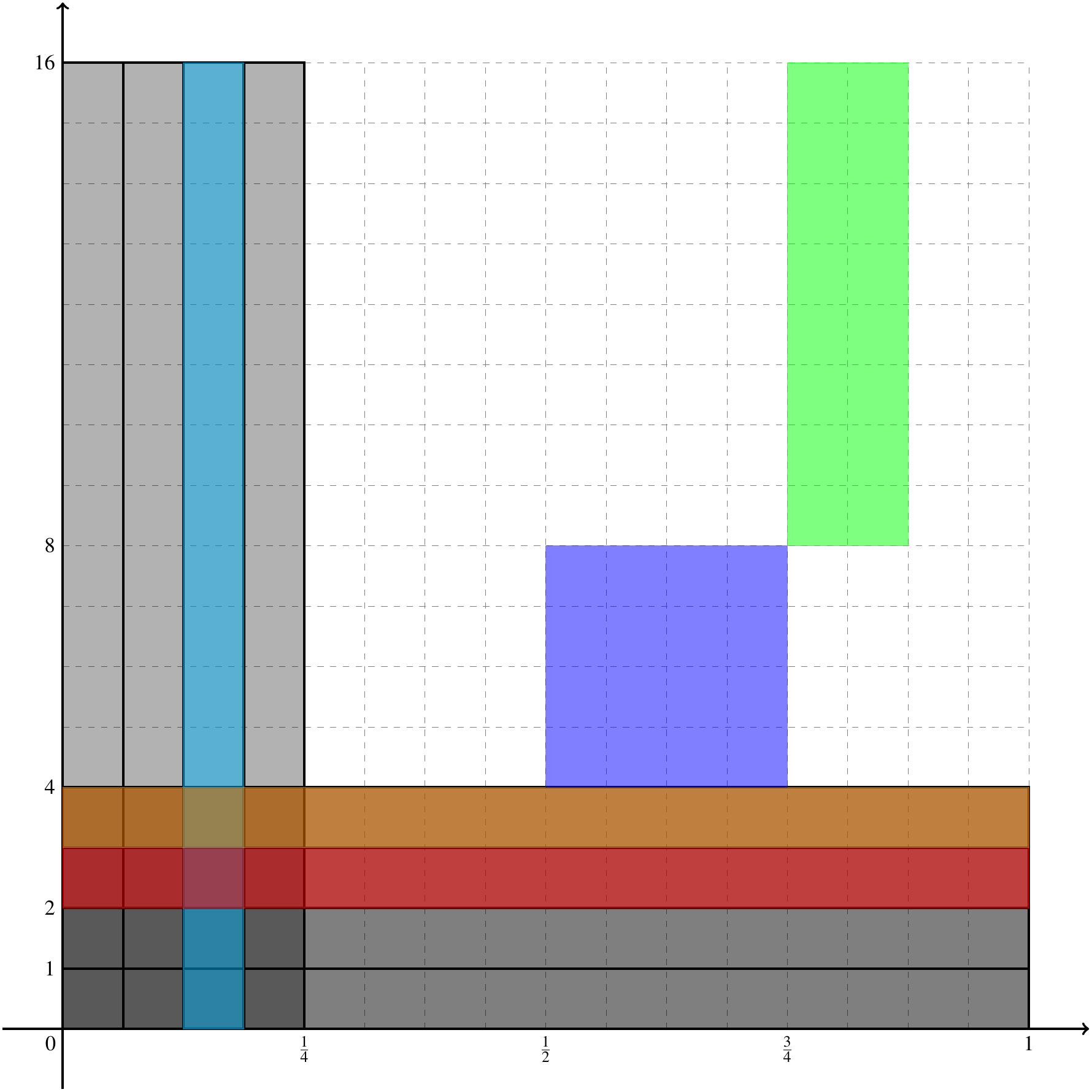}
  \label{fig:tiles}
  \caption{Various tiles down to scale $2^{-4}$. The vertical gray tiles
    correspond to characteristic functions $\chi_{I}$. The horizontal gray lines
    correspond to our replacement of a Fourier basis, $\phi_{p_{l}}$. The tiles
    in green and blue in the upper right corner are at level $l=1$. Plots of the
    corresponding wave packets of all colored tiles are given in Figure
    \ref{fig:wave_packets}. By Lemma \ref{lem:ortho} wave packs $\phi_{p},
    \phi_{p'}$ are $L^{2}$ orthogonal iff they are disjoint. }
\end{figure}

\begin{figure}[hbt]
  \includegraphics[width=0.3\linewidth]{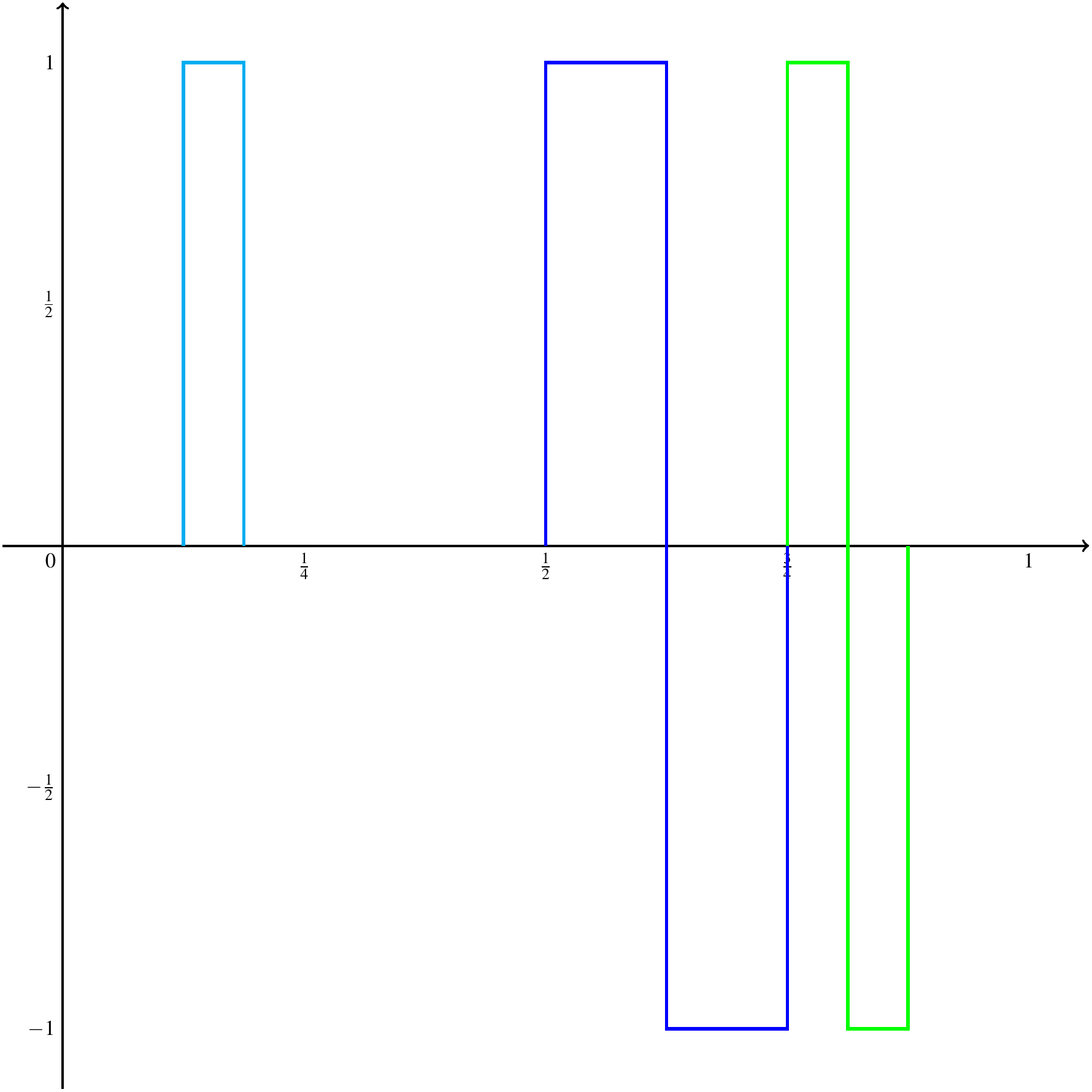}
  \includegraphics[width=0.3\linewidth]{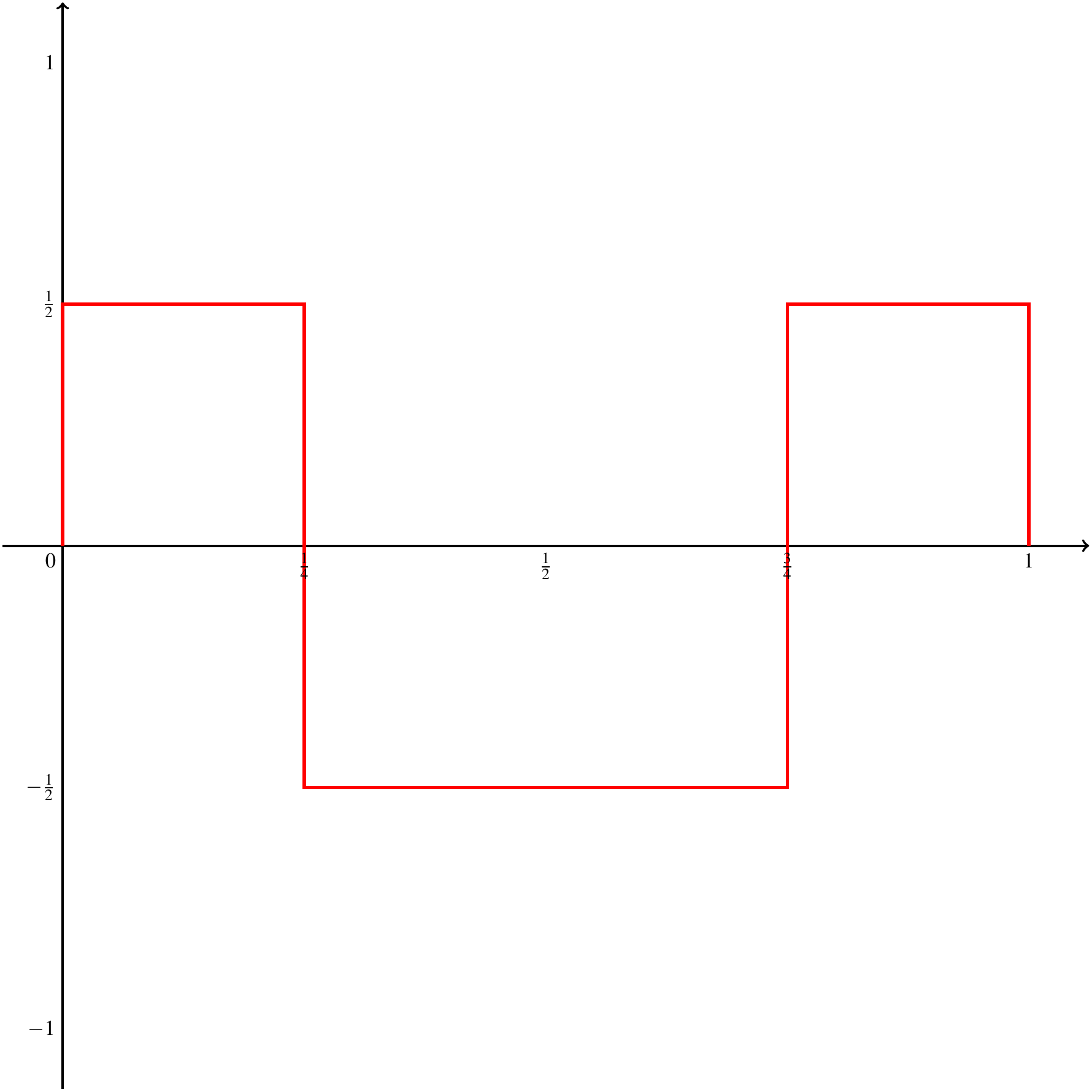}
  \includegraphics[width=0.3\linewidth]{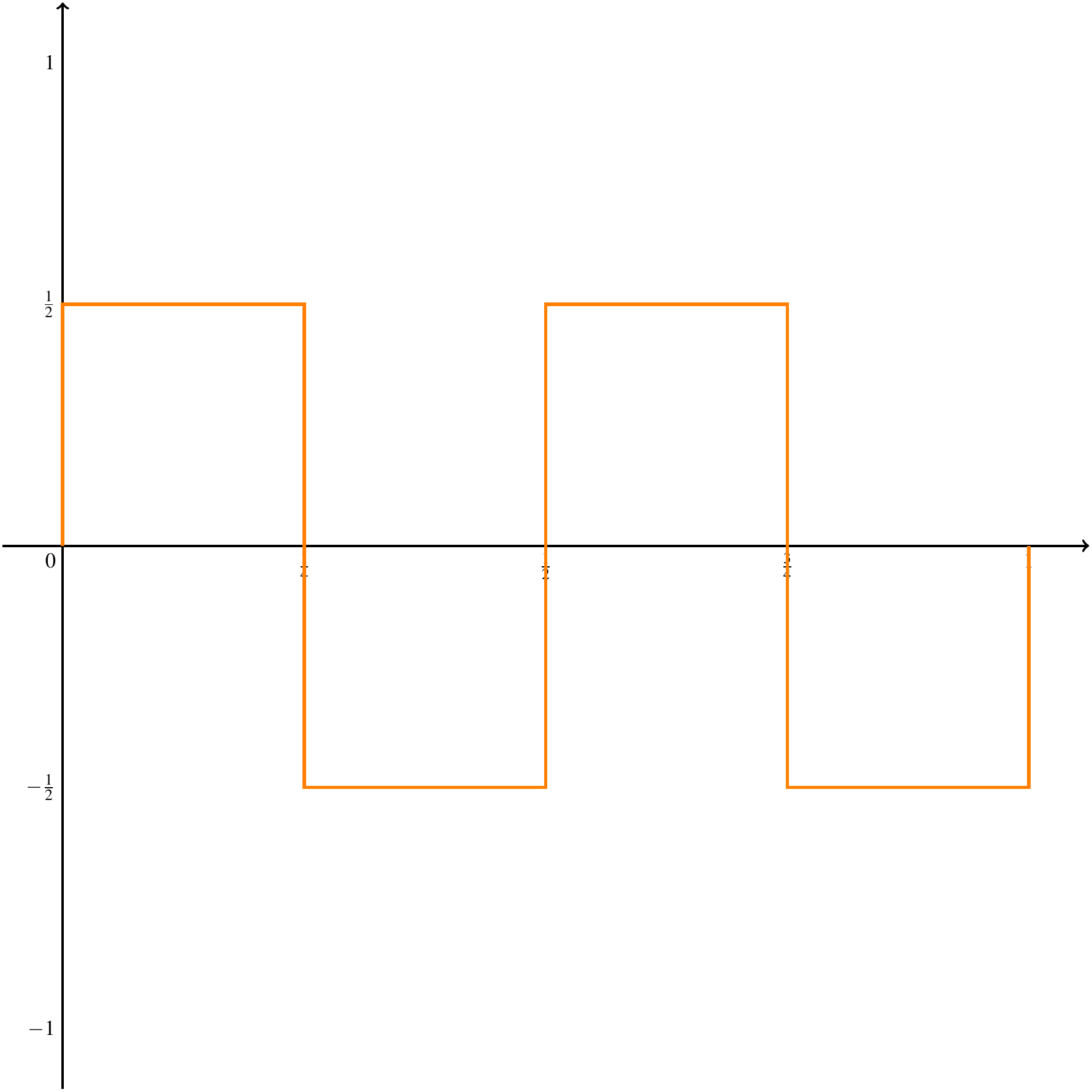}
  \caption{Walsh wave packets associated with the tiles of Figure
    \ref{fig:tiles}}
  \label{fig:wave_packets}
\end{figure}

\begin{defi}
  \label{defi:tiles}
  A \emph{tile} $p$ is a dyadic rectangle of area one in $[0,1) \times
  [0,\infty)$. That is,
  \begin{align*}
    p:= I \times \omega = [2^{-j}k,2^{-j}(k+1)) \times [2^{j}l, 2^{j}(l+1)),
  \end{align*}
  where $k \in \{0, 2^{j}-1\}, j \in \N_{0}, l \in \N_{0}$. If $l=0$, we define
  the \emph{wave-packet}
  $\phi_{p}:= \frac{1}{\sqrt{|I|}}1_{I}$. For $l>0$, we define 
  $\phi_{p}$ recursively. That is, if $p$ is a tile at level $l$ and scale
  $2^{j}$, we can express it as either upper or lower half the union of two
  tiles $p_{l}, p_{r}$ at level $\lfloor{\frac{l}{2}}\rfloor$ and scale
  $2^{-j-1}$. We thus define
  \begin{align*}
    \phi_{p}&= \frac{1}{\sqrt{2}} (\phi_{p_{l}}+\phi_{p_{r}}) \text{ if $l$ is even}, \\
    \phi_{p}&= \frac{1}{\sqrt{2}} (\phi_{p_{l}}-\phi_{p_{r}}) \text{ if $l$ is odd}.
  \end{align*}
\end{defi}

This definition allows us to consider questions of orthogonality and basis
expansions in a graphical way, a so-called \emph{cartoon} (c.f. Figure
\ref{fig:tiles}). The following lemma summarizes some of the main properties we
use in the following.

\begin{lem}[c.f. Lemma 2.9 in {\cite{thiele2000quartile}}]
  \label{lem:ortho}
  For any two tiles $p, p'$ we have
  \begin{align*}
    \left| \int_{[0,1)} \phi_{p} \phi_{p'} \right| = \sqrt{|p \cap p'|} 
  \end{align*}
  In particular two wave packets are $L^{2}$-orthogonal if and only if the underlying
  tiles have empty intersection.
\end{lem}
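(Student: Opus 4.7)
The plan is to reduce the two-tile integral to a one-column calculation via a restriction identity for wave packets. If $I\cap I'=\emptyset$ then the wave packets have disjoint supports and both $|\int\phi_p\phi_{p'}|$ and $\sqrt{|p\cap p'|}$ vanish, so by dyadic nesting I may assume throughout that $I'\subseteq I$. The core of the proof is the following restriction property: for any tile $p=I\times\omega$ and any dyadic subinterval $I'\subseteq I$,
\[
\phi_p|_{I'} \;=\; \sigma\sqrt{|I'|/|I|}\,\phi_{q},
\]
where $\sigma\in\{\pm 1\}$ and $q=I'\times\tilde\omega$ is the tile whose frequency interval $\tilde\omega$ is the unique dyadic interval of length $1/|I'|$ containing $\omega$.

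I would prove this restriction property by strong induction on the level $l$ of $p$, using the recursive definition of $\phi_p$. The base case $l=0$ is immediate, as $\phi_p$ is then a normalised indicator. For $l\geq 1$ either $I'=I$ (trivial) or, by dyadic nesting, $I'$ lies strictly inside exactly one of the two halves $I_{p_L},I_{p_R}$; the other summand then vanishes on $I'$, and the inductive hypothesis applies to the surviving half. A short check shows that the dyadic interval of length $1/|I'|$ containing $\omega_{p_L}$ coincides with the one containing $\omega$, because $\omega_{p_L}\supseteq\omega$. The factor $1/\sqrt2$ from the recursion combines with $\sqrt{|I'|/|I_{p_L}|}$ to yield exactly $\sqrt{|I'|/|I|}$, closing the induction.

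Given the restriction property, the lemma follows in a line. Since $q=I'\times\tilde\omega$ and $p'=I'\times\omega'$ share the spatial interval $I'$, both $\phi_q$ and $\phi_{p'}$ are $L^2$-normalised Walsh functions on $I'$, and by the orthonormality of the Walsh system on a dyadic interval
\[
\int_{I'}\phi_q\,\phi_{p'}\,dx \;=\; \delta_{\tilde\omega,\omega'}.
\]
If $\tilde\omega=\omega'$, then $\omega\subseteq\tilde\omega=\omega'$, hence $p\cap p'=I'\times\omega$ and $|p\cap p'|=|I'|\cdot|\omega|=|I'|/|I|$, matching $\bigl|\int\phi_p\phi_{p'}\bigr|^2$. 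If instead $\tilde\omega\neq\omega'$, these are disjoint dyadic intervals of equal length, so $\omega\subseteq\tilde\omega$ is disjoint from $\omega'$; hence $p\cap p'=\emptyset$ and the integral vanishes, as required.

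The main obstacle is the sign bookkeeping in the induction for the restriction property: the sign $\sigma$ depends non-trivially on the parity of $l$ and on which half of $I$ contains $I'$, and both branches $\phi_p=(\phi_{p_L}\pm\phi_{p_R})/\sqrt2$ occur. However, because the lemma concerns only the absolute value of the integral, it suffices to know that $\sigma\in\{\pm 1\}$, which the induction delivers automatically without further sign tracking.
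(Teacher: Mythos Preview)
The paper does not supply its own proof of this lemma; it is stated with a reference to Lemma~2.9 of \cite{thiele2000quartile} and used as a black box. So there is no in-paper argument to compare your attempt against.

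Your proof is correct. The restriction identity $\phi_p|_{I'}=\sigma\sqrt{|I'|/|I|}\,\phi_q$ is exactly the right reduction, and your induction on the level $l$ goes through as you describe: once $I'\subsetneq I$, only one of $\phi_{p_L},\phi_{p_R}$ survives on $I'$, the inductive hypothesis applies to that half at the strictly smaller level $\lfloor l/2\rfloor$, and the containment $\omega\subset\omega_{p_L}$ guarantees that the induced frequency interval $\tilde\omega$ is unchanged. The sign is indeed irrelevant for the absolute value.

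The one point worth making explicit is your appeal to ``orthonormality of the Walsh system on a dyadic interval'' for tiles $q,p'$ sharing the same spatial interval $I'$. In the logic of the paper this is not yet available (the subsequent Corollary derives the orthonormal basis property \emph{from} the present lemma), so strictly speaking you should justify it independently. Fortunately this special case is a one-line induction directly from Definition~\ref{defi:tiles}: if $\phi_{p_L},\phi_{p_R}$ are orthonormal on their respective halves, then $\tfrac{1}{\sqrt2}(\phi_{p_L}\pm\phi_{p_R})$ are orthonormal on $I'$, and orthogonality across different levels follows since the recursion ultimately resolves any pair of same-column tiles into such a sum/difference. Adding a sentence to this effect would make the argument self-contained.
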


\begin{cor}[c.f. Corollary 2.7 in {\cite{thiele2000quartile}}]
  Furthermore, two families of tiles $P, P'$ cover the same region in $[0,1)
  \times [0,\infty)$ if and only if the spans of $\{\phi_{p}: p \in P\}$ and
  $\{\phi_{p}: p \in P'\}$ are identical. In particular, denoting $p_{l}=[0,1)
  \times [l, l+1)$, we obtain that $\{\chi_{I}\}_{I \in \mathcal{D}_{j}}$ and
  $\{\phi_{p_{l}}: l \in \{0,2^{j}-1\}\}$ are both orthonormal bases of the same
  space, which we denote by $E_{j}$. We further introduce the $L^{2}$ orthogonal
  \emph{projection operators} $P_{j}$ onto $E_{j}$.
\end{cor}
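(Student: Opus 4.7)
The plan is to show that the span of $\{\phi_p : p \in P\}$ depends only on the region $\bigcup_{p \in P} p$ covered by the tiling; the corollary's ``iff'' and the identification of $E_j$ are then immediate. The orthonormality of $\{\phi_p : p \in P\}$ in any tiling follows at once from Lemma \ref{lem:ortho}, since tiles in a tiling are pairwise disjoint.

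The central tool I would develop is a \emph{two-tile exchange} identity derived from Definition \ref{defi:tiles}. Given a tile $p$ at level $l$, let $p^{\ast}$ denote the frequency partner sharing the same spatial interval $I$ whose frequency interval together with that of $p$ fills the parent dyadic interval $\omega'$ (so $p^{\ast}$ is at level $l\pm 1$), and let $p_l, p_r$ be the two tiles whose spatial intervals are the halves of $I$ and whose common frequency interval is $\omega'$ (both at level $\lfloor l/2 \rfloor$). Unwinding the recursion in Definition \ref{defi:tiles} and checking the two parity cases yields, up to a common sign,
\begin{align*}
\phi_p &= \tfrac{1}{\sqrt{2}}(\phi_{p_l} + \phi_{p_r}), \\
\phi_{p^{\ast}} &= \tfrac{1}{\sqrt{2}}(\phi_{p_l} - \phi_{p_r}),
\end{align*}
so that $\mathrm{span}\{\phi_p, \phi_{p^{\ast}}\} = \mathrm{span}\{\phi_{p_l}, \phi_{p_r}\}$. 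Applied inside any tiling that contains the pair $\{p, p^{\ast}\}$, this exchange produces a new tiling of the same region with identical span.

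Next I would argue that any two tilings of a common region $R$ are connected by finitely many such exchanges. My strategy is a greedy reduction: pick a tile $p \in P$ realizing the maximum level present in $P$, and show that its partner $p^{\ast}$ must also lie in $P$, since $R$ is a disjoint union of unit-area tiles and dyadic combinatorics force any tile in $P$ meeting the rectangle $I \times \omega'$ to be either $p$ or $p^{\ast}$. Applying the exchange strictly lowers the maximum level, and induction terminates at a canonical tiling by level-zero tiles, i.e.\ the $\chi_I$'s. The reverse implication then follows either because the canonical level-zero tiling is itself uniquely determined by the span (its $\chi_I$ basis members are recovered as the $\pm 1$-valued elements of the span supported on a single dyadic interval), or by appeal to the standard Walsh-Fourier theory of \cite{thiele2000quartile}.

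The $E_j$ statement is an immediate specialization: the collections $\{I \times [0, 2^j) : I \in \mathcal{D}_j\}$ and $\{[0,1) \times [l,l+1) : l = 0, \dots, 2^j-1\}$ are both tilings of $[0,1) \times [0, 2^j)$, so the first part of the corollary yields equal spans, while Lemma \ref{lem:ortho} supplies orthonormality of each family separately. I expect the main obstacle to be the greedy-reduction step, specifically verifying that the dyadic constraints on tile placement within $R$ force the partner $p^{\ast}$ into $P$ at every stage; this needs a careful case analysis of how tiles at distinct levels may abut along a shared spatial or frequency edge, and is the step where the structural content of Corollary 2.7 of \cite{thiele2000quartile} is really being used.
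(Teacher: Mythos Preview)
The paper does not prove this corollary; it is quoted from \cite{thiele2000quartile} as a known fact, so there is no argument of the paper's to compare against. Your two-tile exchange identity is correct. The gap is in the greedy reduction: the claim that for a maximum-level tile $p=I\times\omega$ every tile of $P$ meeting $I\times\omega'$ must be $p$ or $p^{\ast}$ is false. With $R=[0,1)\times[0,4)$ and
\[
P=\bigl\{[0,1)\times[0,1),\ [0,1)\times[1,2),\ [0,\tfrac12)\times[2,4),\ [\tfrac12,1)\times[2,4)\bigr\},
\]
the maximum level is $1$; for $p=[0,\tfrac12)\times[2,4)$ one has $\omega'=[0,4)$ and $p^{\ast}=[0,\tfrac12)\times[0,2)\notin P$, while $[0,1)\times[0,1)\in P$ meets $I\times\omega'=[0,\tfrac12)\times[0,4)$ without being $p$ or $p^{\ast}$. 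More structurally, a level-zero tiling of $R$ need not exist at all (take $R=[0,1)\times[0,3)$, where the tile $[0,1)\times[2,3)$ is forced into every tiling and its partner $[0,1)\times[3,4)$ lies outside $R$), so reducing an arbitrary tiling to level-zero tiles cannot be the mechanism behind the general ``iff''.

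A short route that avoids the combinatorial reduction uses only Lemma~\ref{lem:ortho}. For any tile $q\subset R$ and any tiling $P$ of $R$,
\[
\sum_{p\in P}|\langle\phi_q,\phi_p\rangle|^{2}
=\sum_{p\in P}|q\cap p|
=|q\cap R|
=|q|
=1
=\|\phi_q\|^{2},
\]
so equality in Bessel's inequality gives $\phi_q\in\mathrm{span}\{\phi_p:p\in P\}$. Hence that span equals $\mathrm{span}\{\phi_q:\ q\text{ a tile},\ q\subset R\}$, which depends only on $R$; the reverse implication follows by the same identity applied to $q\in P'$, giving $|q\cap R|=|q|$ and hence $q\subset R$. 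The $E_j$ identification then drops out exactly as you wrote.
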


\begin{defi}[Mixing scales]
  \label{defi:dyadic_scales}
  Given $\rho \in L^{2}([0,1))$, we introduce the \emph{geometric mixing
    seminorms} at scale $j$ as
  \begin{align*}
    \mathfrak{g}_{j}[\rho]= \sup \left\{ \frac{1}{I} \left| \int_{I} \rho \right|: I \in \mathcal{D}_{j}\right\} = \sqrt{2^{j}} \sup \left\{ |\langle \chi_{I},\rho \rangle| : I \in \mathcal{D}_{j}  \right\}.
  \end{align*}

  Furthermore, for $s \in [-1,1]$ we define \emph{analytic mixing seminorms} up
  to scale $j$ by
  \begin{align*}
    \|\rho\|_{h^{s}_{j}}^{2} = \sum_{l=0}^{2^{j}-1} (1+l^{2})^{2} \left|\langle  \rho ,\phi_{p_{l}} \rangle  \right|^{2}.
  \end{align*}
  We further define $\|\rho\|_{h^{s}}= sup_{j} \|\rho\|_{h^{s}_{j}}$.
\end{defi}

\subsection{Estimates}
\label{sec:estimates}

We note that the geometric mixing seminorms can be expressed as
\begin{align*}
  \mathfrak{g}_{j}[\rho] = \sup_{I \in \mathcal{D}_{j}} \sqrt{2^{j}} |\langle \chi_{I}, \rho \rangle|
\end{align*}
and are hence indeed norms on the space $E_{j}$. Likewise the analytic seminorms
are weighted $l^{2}$ norms on the same space $E_{j}$ when expressed in the
orthonormal basis $\{\phi_{p_{l}}\}$. Hence, both mixing seminorms are
equivalent, with constants depending on $j$.

The following theorem establishes the corresponding estimates as well as related
estimates between different scales with uniform constants.
\begin{thm}
  \label{thm:dyadic_estimate}
  Let $\rho \in L^{2}((0,1))$, then for all $j \in \N$
  \begin{align*}
    \|\rho\|_{h^{-1}_{j}} &\leq \mathfrak{g}_{j}[\rho], \\
    \|\rho\|_{h^{-1}} &\leq \mathfrak{g}_{j}[\rho] + 2^{-j}\|(1-P_{j})\rho\|_{L^{2}},\\
    \mathfrak{g}_{j}[\rho] &\leq 2^{\frac{3}{2}j} \|\rho\|_{h^{-1}_{j}}.
  \end{align*}
  Furthermore, both seminorms depend on $\rho$ only via its projection, that is
  $\|\rho\|_{h^{-1}_{j}}=\|P_{j}\rho\|_{h^{-1}_{j}}$ and
  $\mathfrak{g}_{j}[\rho]=\mathfrak{g}_{j}[P_{j}\rho]$.
\end{thm}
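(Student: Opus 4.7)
The plan is to exploit the fact that $\{\chi_I\}_{I \in \mathcal{D}_j}$ and $\{\phi_{p_l}\}_{l=0}^{2^j-1}$ are two orthonormal bases of the same $2^j$-dimensional space $E_j$, by Lemma \ref{lem:ortho}. The projection-invariance claim is then immediate: every test function appearing in the definition of both $\mathfrak{g}_j[\rho]$ and $\|\rho\|_{h^{-1}_j}$ lies in $E_j$, so the inner products $\langle \chi_I,\rho\rangle$ and $\langle \phi_{p_l},\rho\rangle$ are unchanged upon replacing $\rho$ by $P_j\rho$. The remaining three inequalities are linear-algebraic comparisons between the same vector $P_j\rho$ expressed in the two bases, with the $h^{-1}_j$ side carrying the diagonal weight $(1+l^2)^{-1}$.

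For the first estimate, I would note that the weight satisfies $(1+l^2)^{-1} \leq 1$, so $\|\rho\|_{h^{-1}_j}^2 \leq \sum_l |\langle \rho,\phi_{p_l}\rangle|^2$. By Parseval applied in either basis of $E_j$, this common sum equals $\|P_j\rho\|_{L^2}^2 = \sum_I |\langle \rho,\chi_I\rangle|^2 \leq 2^j \sup_I |\langle \rho,\chi_I\rangle|^2 = \mathfrak{g}_j[\rho]^2$. The second estimate then follows by splitting the full series $\|\rho\|_{h^{-1}}^2 = \sum_{l=0}^{\infty}(1+l^2)^{-1}|\langle \rho,\phi_{p_l}\rangle|^2$ at the cutoff $l=2^j$: the head coincides with $\|\rho\|_{h^{-1}_j}^2$ and is handled by the first estimate, while for the tail I use that $\{\phi_{p_l}\}_{l \geq 2^j}$ is an orthonormal basis of $E_j^{\perp}$ (again by Lemma \ref{lem:ortho}), so that Parseval produces $\|(1-P_j)\rho\|_{L^2}^2$, combined with the elementary bound $(1+l^2)^{-1}\leq 2^{-2j}$ valid for all $l \geq 2^j$.

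The direction that requires the most care, and is the main obstacle, is the third estimate, where the change-of-basis coefficients $b_{I,l}:=\langle \chi_I,\phi_{p_l}\rangle$ must be controlled well enough to survive multiplication by the growing weight $(1+l^2)$. The key input is the pointwise bound $|\phi_{p_l}(x)|\leq 1$, which follows from the recursive $\frac{1}{\sqrt{2}}(\phi_{p_l}\pm\phi_{p_r})$ construction in Definition \ref{defi:tiles} (each step preserves the property of taking values in $\pm 1$ on dyadic intervals). Together with $\chi_I = 2^{j/2}1_I$ and $|I|=2^{-j}$ this yields $|b_{I,l}| \leq 2^{-j/2}$. Expanding $\langle \chi_I,\rho\rangle = \sum_l b_{I,l}\langle \phi_{p_l},\rho\rangle$ and applying Cauchy--Schwarz with the weights $(1+l^2)^{\pm 1/2}$ then reduces matters to the deterministic sum $2^{-j}\sum_{l=0}^{2^j-1}(1+l^2) \asymp 2^{2j}$. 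This produces $|\langle \chi_I,\rho\rangle|\lesssim 2^j \|\rho\|_{h^{-1}_j}$, and the $\sqrt{2^j}$ prefactor in the definition of $\mathfrak{g}_j$ contributes exactly the remaining half-power, giving the stated $2^{3j/2}$ loss, which is consistent with the scaling heuristic flagged at the end of Section \ref{sec:AMtoGM}.
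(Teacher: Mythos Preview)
Your proposal is correct. For the projection-invariance and for the first two inequalities your argument coincides with the paper's: Parseval between the two orthonormal bases of $E_j$, followed by the trivial $\ell^2\hookrightarrow\ell^\infty$ bound on a $2^j$-dimensional space, and the tail split at $l=2^j$ with $(1+l^2)^{-1}\leq 2^{-2j}$.

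For the third inequality you take a slightly more elaborate route than the paper. You estimate the change-of-basis coefficients $|b_{I,l}|=|\langle\chi_I,\phi_{p_l}\rangle|\leq 2^{-j/2}$ via the pointwise bound $|\phi_{p_l}|\leq 1$, and then run a weighted Cauchy--Schwarz together with $2^{-j}\sum_{l<2^j}(1+l^2)\asymp 2^{2j}$. The paper instead uses no structural information about the Walsh packets beyond orthonormality: writing $P_j\rho=\sum_I d_I\chi_I=\sum_l c_l\phi_{p_l}$, it simply chains
\[
\sqrt{2^j}\,\max_I|d_I|\;\leq\;\sqrt{2^j}\,\|d_I\|_{\ell^2}\;=\;\sqrt{2^j}\,\|c_l\|_{\ell^2}\;\leq\;\sqrt{2^j}\cdot 2^j\cdot\|\langle l\rangle^{-1}c_l\|_{\ell^2},
\]
the last step being just $\langle l\rangle\leq 2^j$ for $0\leq l\leq 2^j-1$. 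Both arguments land on the same factor $2^{3j/2}$; the paper's version shows that the pointwise control of $\phi_{p_l}$, while true, is not actually needed for this estimate, so what you flagged as ``the main obstacle'' turns out to be no harder than the first inequality.
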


We remark that the loss of the factor $2^{\frac{3}{2}j}$ here can be interpreted
as corresponding to the embedding $H^{-1}\subset H^{1/2}\subset L^{\infty}$.
Indeed, if $\rho \in E_{j}$, then $\rho$ is constant on each interval $I \in
\mathcal{D}_{j}$ and thus $\mathfrak{g}_{j}[\rho]=\|\rho\|_{L^{\infty}}$.

\begin{cor}
  \label{cor:dyadic_estimate}
  Let $\rho \in L^{2}((0,1))$, then
  \begin{enumerate}
  \item If $\|\rho\|_{h^{-1}_{j_{0}}}\leq 2^{-j_{0}}$, then
    $\mathfrak{g}_{j}[\rho] \leq C 2^{\frac{3}{2}j-j_{0}}$. In particular, if $j
    \leq \frac{2}{3}j_{0}$, then $\rho$ is geometrically mixed at scale
    $2^{-j}$.
  \item Furthermore, for $j \leq \frac{2}{5}j_{0}$, we obtain that
    $\mathfrak{g}_{j}[\rho] \leq 2^{-j}$.
  \item Conversely, if $\mathfrak{g}_{j}[\rho]\leq 2^{-j}$, then
    \begin{align*}
      \|\rho\|_{h^{-1}_{j}} \leq \mathfrak{g}_{j}[\rho] \leq 2^{-j}.
    \end{align*}
    If we additionally assume that $\|(1-P_{j})\rho\|_{L^{2}}\leq 1$, then
    furthermore
    \begin{align*}
      \|\rho\|_{h^{-1}} \leq \mathfrak{g}_{j}[\rho] \leq 2^{-j} + 2^{-j}\|(1-P_{j})\rho\|_{L^{2}} \leq C 2^{-j}.
    \end{align*}
  \end{enumerate}
\end{cor}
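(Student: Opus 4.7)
The plan is to deduce this corollary almost immediately from Theorem \ref{thm:dyadic_estimate} by combining the three estimates there with one easy monotonicity observation for the seminorms $\|\cdot\|_{h^{-1}_{j}}$.

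The monotonicity I would record first is that whenever $j \leq j_{0}$, one has $\|\rho\|_{h^{-1}_{j}} \leq \|\rho\|_{h^{-1}_{j_{0}}}$. This is immediate from the definition, since $\|\rho\|_{h^{-1}_{j}}^{2}$ is a sum of non-negative terms indexed by $l \in \{0,\dots,2^{j}-1\}$ and the $h^{-1}_{j_{0}}$-norm includes all of the same terms plus additional ones. Equivalently, since $E_{j} \subset E_{j_{0}}$ and both seminorms are weighted $\ell^{2}$-norms in the $\{\phi_{p_{l}}\}$-basis, the $h^{-1}_{j}$ piece sits inside the $h^{-1}_{j_{0}}$ piece.

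For parts (1) and (2) I would then chain this monotonicity with the third inequality of Theorem \ref{thm:dyadic_estimate}: starting from $\|\rho\|_{h^{-1}_{j_{0}}}\leq 2^{-j_{0}}$ and any $j \leq j_{0}$, we get
\begin{align*}
\mathfrak{g}_{j}[\rho] \leq 2^{\frac{3}{2}j}\|\rho\|_{h^{-1}_{j}} \leq 2^{\frac{3}{2}j}\|\rho\|_{h^{-1}_{j_{0}}} \leq 2^{\frac{3}{2}j-j_{0}},
\end{align*}
which is exactly the stated bound in (1). The two threshold conditions are then just arithmetic on the exponent: requiring $2^{\frac{3}{2}j-j_{0}} \leq 1$ gives $j \leq \tfrac{2}{3}j_{0}$, proving the ``in particular'' claim of (1); requiring $2^{\frac{3}{2}j-j_{0}}\leq 2^{-j}$ is equivalent to $\tfrac{5}{2}j \leq j_{0}$, i.e.\ $j \leq \tfrac{2}{5}j_{0}$, proving (2).

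Part (3) requires no new work beyond reading off Theorem \ref{thm:dyadic_estimate}: the first inequality there gives $\|\rho\|_{h^{-1}_{j}} \leq \mathfrak{g}_{j}[\rho] \leq 2^{-j}$, and the second inequality together with the hypothesis $\|(1-P_{j})\rho\|_{L^{2}}\leq 1$ yields
\begin{align*}
\|\rho\|_{h^{-1}} \leq \mathfrak{g}_{j}[\rho] + 2^{-j}\|(1-P_{j})\rho\|_{L^{2}} \leq 2^{-j}+2^{-j} = C 2^{-j}.
\end{align*}
There is no real obstacle here; the content of the corollary is purely organizational, and the main thing to be careful about is keeping the direction of the monotonicity straight (larger $j$ gives larger $h^{-1}_{j}$-seminorm, so the hypothesis at scale $j_{0}$ transfers down to all smaller scales, not up).
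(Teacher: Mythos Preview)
Your proposal is correct and essentially identical to the paper's proof; the only difference is that you make explicit the monotonicity $\|\rho\|_{h^{-1}_{j}} \leq \|\rho\|_{h^{-1}_{j_{0}}}$ for $j \leq j_{0}$, which the paper uses implicitly when it writes ``We apply Theorem \ref{thm:dyadic_estimate} to obtain that $\mathfrak{g}_{j}[\rho] \leq C 2^{\frac{3}{2}j-j_{0}}$.'' The arithmetic on the thresholds and the handling of part (3) match the paper verbatim.
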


\begin{rem}    
  Denoting $2^{-j_{0}}=\epsilon$, the above results show that
  $\|\rho\|_{h^{-1}}\leq \epsilon$ implies
  $\mathfrak{g}_{\log(\epsilon^{2/3})}[\rho] \leq C$ and
  $\mathfrak{g}_{\log(\epsilon')}[\rho]\leq \epsilon'$ for $\epsilon'\geq
  \epsilon^{2/5}$.
  
  Conversely, if $\mathfrak{g}_{\log(\epsilon)}[\rho] \leq \epsilon$ and we
  control $\|\rho\|_{L^{2}}$, then also $\|\rho\|_{h^{-1}}\leq \epsilon$.

  The analytic and geometric mixing scales are hence comparable with a loss in
  the exponent in one direction (we hence do not use the word equivalent).
  
  Furthermore, we show in Lemma \ref{lem:dyadic_optimal} that this loss is
  optimal.

  As mentioned in the introductory Section \ref{sec:examples}, in this article
  we hence stress the viewpoint that the examples constructed in
  \cite{lunasin2012optimal} should instead be interpreted as showing the
  necessity of the control of $\mathfrak{g}_{\log(\epsilon)}[\rho] \leq
  \epsilon$ instead of $\mathfrak{g}_{\log(\epsilon)}[\rho] \leq \kappa$ and of the loss $\epsilon \rightarrow
  \epsilon^{2/3}$.

  We discuss this interpretation, scaling and the constructions further in
  Section \ref{sec:proofs}.
\end{rem}

\begin{proof}[Proof of Theorem \ref{thm:dyadic_estimate}]
  Let $\rho \in L^{2}$ be given and consider the basis expansions of $P_{j}\rho
  \in E_{j}$:
  \begin{align*}
    P_{j}\rho= \sum_{I} d_{I} \chi_{I}= \sum_{l} c_{l} \phi_{p_{l}}.  
  \end{align*}
  Since both $\chi_{I}$ and $\phi_{p_{l}}$ are orthonormal bases of $E_{j}$, it
  follows that
  \begin{align*}
    \|d_{I}\|_{l^{2}}=\|c_{l}\|_{l^{2}}.
  \end{align*}
  Then we may estimate
  \begin{align*}
    \|\rho\|_{h^{-1}_{j}} = \|<l>^{-1}c_{l}\|_{l^{2}} \leq \|c_{l}\|_{l^{2}}= \|d_{I}\|_{l^{2}} \leq \sqrt{2^{j}} \max{|d_{I}|}.
  \end{align*}
  On the other hand, the normalization of the geometric mixing functionals is such that
  \begin{align*}
    \mathfrak{g}_{j}[\rho]= \max \frac{1}{|I|} \langle \chi_{I},\rho \rangle = \sqrt{2^{j}}\max{|d_{I}|}.
  \end{align*}
  For the converse estimate, we note that
  \begin{align*}
    \mathfrak{g}_{j}[\rho]&= \sqrt{2^{j}}\max{|d_{I}|} \leq \sqrt{2^{j}} \|d_{I}\|_{l^{2}}\\
                          &= \sqrt{2^{j}} \|c_{l}\|_{l^{2}} \leq \sqrt{2^{j}}2^{j} \|<l>^{-1}c_{l}\|_{l^{2}} \\
                          &\leq 2^{\frac{3}{2}j}\|\rho\|_{h^{-1}_{j}}.
  \end{align*}

\noindent
If $\rho \not \in E_{j}$, we note that by definition
\begin{align*}
  \|\rho\|_{h^{-1}_{j}}^{2}= \sum_{l=0}^{2^{j}-1} \frac{1}{1+l^{2}} |\langle \rho, \phi_{p_{l}} \rangle|^{2} \leq \sum_{l=0}^{\infty} \frac{1}{1+l^{2}} |\langle \rho, \phi_{p_{l}} \rangle|^{2}= \|\rho\|_{h^{-1}}^{2}.
\end{align*}
For the estimate of $\|\rho\|_{h^{-1}}$, we thus split $\rho$ using $P_{j},
1-P_{j}$ and obtain
\begin{align*}
  \|\rho\|_{h^{-1}}^{2} &= \|\rho\|_{h^{-1}_{j}}^{2} + \sum_{l\geq 2^{j}} \frac{1}{1+l^{2}} |\langle \rho, \phi_{p_{l}} \rangle|^{2}
  \\ &\leq \mathfrak{g}_{j}[\rho]^{2} + 2^{-2j}\|\rho\|_{L^{2}}^{2}. 
\end{align*}
\end{proof}

\begin{proof}[Proof of Corollary \ref{cor:dyadic_estimate}]
  We apply Theorem \ref{thm:dyadic_estimate} to obtain that
  \begin{align*}
    \mathfrak{g}_{j}[\rho] \leq C 2^{\frac{3}{2}j-j_{0}}.
  \end{align*}
  The first statements hence follow by noting that
  \begin{align*}
    \frac{3}{2}j -j_{0}\leq 0 \Leftrightarrow j \leq \frac{2}{3}j_{0},
  \end{align*}
  and the second from
  \begin{align*}
    \frac{3}{2}j -j_{0}\leq -j \Leftrightarrow j \leq \frac{2}{5}j_{0}.
  \end{align*}
  The last statement similarly follows as a direct corollary of Theorem
  \ref{thm:dyadic_estimate}.
\end{proof}

The following lemma shows that these restrictions on $j$ are optimal.
\begin{lem}
  \label{lem:dyadic_optimal}
  There exists a family of functions $\rho=\rho(j_{0})$ such that
  $\|\rho\|_{h^{-1}_{j_{0}}}\leq C 2^{-j_{0}}$ and which satisfies the following
  properties:
  \begin{enumerate}
  \item For any $\alpha < \frac{2}{3}$ and $j=\lfloor {\alpha j_{0}} \rfloor$,
    it holds that $\mathfrak{g}_{j}[\rho] =o(1)$ as $j_{0} \rightarrow \infty$.
    If instead $\alpha > \frac{2}{3}$, then $\mathfrak{g}_{j}[\rho]\rightarrow
    \infty$.
  \item For any $\alpha < \frac{2}{5}$ and $j=\lfloor {\alpha j_{0}} \rfloor$,
    $2^{j} \mathfrak{g}_{j}[\rho] = o(1)$ as $j_{0}\rightarrow \infty$. If
    instead $\alpha > \frac{2}{5}$, then $2^{j}
    \mathfrak{g}_{j}[\rho]\rightarrow \infty$.
  \end{enumerate}
\end{lem}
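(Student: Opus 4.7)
Both upper halves of the lemma ($\mathfrak{g}_j[\rho]\to 0$ for $\alpha<2/3$ and $2^j\mathfrak{g}_j[\rho]\to 0$ for $\alpha<2/5$) are automatic for any family with $\|\rho\|_{h^{-1}_{j_0}}\le C 2^{-j_0}$ by Corollary \ref{cor:dyadic_estimate}, so the work is to exhibit one family $\rho(j_0)$ witnessing the matching lower bounds. The plan is to choose Walsh coefficients of a specific self-similar profile so that the Cauchy--Schwarz inequality converting $\|\rho\|_{h^{-1}_{j}}$ into $\mathfrak{g}_j[\rho]$ is saturated \emph{simultaneously for every} $j\le j_0$, at the cost of only a logarithmic loss. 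The motivating heuristic is that the bound $\mathfrak{g}_j[\rho]\le 2^{3j/2}\|\rho\|_{h^{-1}_j}$ is tight at scale $j$ only when (a) $\|P_j\rho\|_{h^{-1}_j}$ captures essentially all of $\|\rho\|_{h^{-1}_{j_0}}$, and (b) the low-frequency Walsh coefficients have their signs aligned on a single interval $I\in\mathcal{D}_j$; both (a) and (b) must hold for every $j$ at once.

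Concretely, fix $I^{\star}:=[0,2^{-j_0})\in\mathcal{D}_{j_0}$, let $\sigma_l:=\sgn\langle\chi_{I^{\star}},\phi_{p_l}\rangle\in\{\pm 1\}$, and set
\begin{align*}
  \rho \;=\; K\sum_{l=1}^{2^{j_0}-1}\sqrt{l}\,\sigma_l\,\phi_{p_l}, \qquad K\asymp 2^{-j_0}/\sqrt{j_0}.
\end{align*}
The crucial structural observation, enabling one single sign vector to work at every scale, is that for $l<2^j$ the wave packet $\phi_{p_l}$ lies in $E_j$ and is therefore constant $\pm 1$ on every $I\in\mathcal{D}_j$. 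Hence for any $I_j\in\mathcal{D}_j$ with $I_j\supset I^{\star}$ we have $\phi_{p_l}|_{I_j}=\phi_{p_l}|_{I^{\star}}$, so $\sgn\langle\chi_{I_j},\phi_{p_l}\rangle=\sigma_l$ for all $l<2^j$.

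The remaining computations are routine. First, by orthonormality of the $\phi_{p_l}$,
\begin{align*}
\|\rho\|_{h^{-1}_{j_0}}^2 \;=\; K^2\sum_{l=1}^{2^{j_0}-1}\frac{l}{1+l^2} \;\asymp\; K^2 j_0 \;\asymp\; 2^{-2j_0},
\end{align*}
giving the hypothesized bound $\|\rho\|_{h^{-1}_{j_0}}\le C 2^{-j_0}$. Second, choosing $I_j\supset I^{\star}$ and combining Lemma \ref{lem:ortho} with the sign alignment,
\begin{align*}
\mathfrak{g}_j[\rho] \;\ge\; 2^{j/2}\,\langle\chi_{I_j},\rho\rangle \;=\; K\sum_{l=1}^{2^j-1}\sqrt{l} \;\asymp\; K\cdot 2^{3j/2} \;\asymp\; \frac{2^{3j/2-j_0}}{\sqrt{j_0}}.
\end{align*}
Substituting $j=\lfloor\alpha j_0\rfloor$ reads off all four cases: $\mathfrak{g}_j[\rho]\asymp 2^{(3\alpha/2-1)j_0}/\sqrt{j_0}$ and $2^j\mathfrak{g}_j[\rho]\asymp 2^{(5\alpha/2-1)j_0}/\sqrt{j_0}$ diverge when the respective exponent is positive and vanish when it is negative, the polynomial factor $1/\sqrt{j_0}$ being harmless against an exponential.

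The main obstacle, or rather the only step that is not completely mechanical, is the sign-alignment property: it is what allows a single coherent choice of signs $\sigma_l$ (determined by the single fine-scale interval $I^{\star}$) to realize the extremal case of Cauchy--Schwarz on every coarser interval $I_j\supset I^{\star}$ simultaneously. Once this is identified, the whole construction reduces to the elementary asymptotics $\sum_{l\le N} l/(1+l^2)\asymp\log N$ and $\sum_{l\le N}\sqrt{l}\asymp\tfrac{2}{3}N^{3/2}$.
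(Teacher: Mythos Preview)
Your proof is correct and takes a genuinely different route from the paper's.

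The paper builds, for each threshold parameter $\alpha$, the function
$\rho=2^{-j_{0}(1-\alpha/2)}\sum_{2^{\alpha j_{0}}\le l< 2^{j_{0}}}\phi_{p_{l}}$,
i.e.\ constant Walsh coefficients on a frequency band whose lower edge sits at $2^{\alpha j_{0}}$; the family therefore depends on $\alpha$ and one reads off the critical exponent by sliding the band edge. Your construction instead fixes the weights $c_{l}\propto\sqrt{l}$ on the full range $1\le l<2^{j_{0}}$, producing a \emph{single} family independent of $\alpha$ and verifying all four cases simultaneously, at the price of a harmless $1/\sqrt{j_{0}}$ factor. Both arguments rest on the same structural fact---that $\phi_{p_{l}}$ with $l<2^{j}$ is constant on each $I\in\mathcal{D}_{j}$ and equals $+1$ on $[0,2^{-j})$---which you package as the ``sign alignment'' $\sigma_{l}$ (in fact $\sigma_{l}\equiv+1$ for this choice of $I^{\star}$, so the signs are not strictly needed, but the formulation is harmless). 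Your version matches the lemma's wording more literally (one family $\rho(j_{0})$ for all $\alpha$); the paper's version trades this for the cleaner closed form $\rho=2^{j_{0}}1_{[0,2^{-j_{0}})}-2^{\alpha j_{0}}1_{[0,2^{-\alpha j_{0}})}$ noted in Remark~\ref{rem:onalpha}, which makes the geometric mixing scale visible by inspection.
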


\begin{proof}
  As shown in the preceding Theorem \ref{thm:dyadic_estimate} and Corollary
  \ref{cor:dyadic_estimate}, we have
  \begin{align*}
    \mathfrak{g}_{j}[\rho]= \sqrt{2^{j}}\|d_{I}\|_{l^{\infty}} \leq \sqrt{2^{j}}\|d_{I}\|_{l^{2}} \leq 2^{\frac{3}{2}j}\|\rho\|_{H^{-1}} \leq 2^{\frac{3}{2}j-j_{0}}.
  \end{align*}
  Hence, we note that $\frac{3}{2}j-j_{0}\leq 0 \Leftrightarrow j \leq
  \frac{2}{3}j_{0}$ and that $\frac{3}{2}j -j_{0}\leq j \Leftrightarrow j \leq
  \frac{2}{5}$.

  It hence only remains to show that these estimates are indeed sharp in the
  thresholds in $j$. For this purpose, consider
  \begin{align*}
    \rho=\sum_{2^{\alpha j_{0}}-1 \leq l \leq 2^{j_{0}}-1} \phi_{p_{l}}. 
  \end{align*}
  with $\alpha \in (0,1)$ to be chosen later (c.f. also Remark \ref{rem:onalpha}). Then we can compute
  \begin{align*}
    \|\rho\|_{h^{-1}_{j_{0}}} = \left( \sum_{2^{\alpha j_{0}}-1 \leq l \leq 2^{j_{0}}-1} l^{-2}  \right)^{\frac{1}{2}}\approx 2^{-j_{0} \frac{\alpha}{2}}.
  \end{align*}
  On the other hand, averaging over the interval $[0,2^{-j})$, wave packets
  $\phi_{p_{l}}$ with $l>2^{-j}$ are orthogonal, while wave packets with $l\leq
  2^{j}$ are constantly equal to one when restricted to this interval. Hence, we
  obtain that for any $j \leq j_{0}$
  \begin{align*}
    \mathfrak{g}_{j}[\rho] = \sum_{2^{j_{0}\alpha}-1 \leq l \leq 2^{j-1}} 1 \approx 2^{j}
  \end{align*}
  if $j> j_{0} \alpha$. If we now multiply $\rho$ by
  $2^{-j_{0}(1-\frac{\alpha}{2})}$, we are exactly in the setting described.
  That is,
  \begin{align*}
    \|2^{-j_{0}(1-\frac{\alpha}{2})} \rho\|_{H^{-1}} \approx 2^{-j_{0}(1-\frac{\alpha}{2})}2^{-j_{0} \frac{\alpha}{2}} =2^{-j_{0}}
  \end{align*}
  and
  \begin{align*}
    \mathfrak{g}_{j}[2^{-j_{0}(1-\frac{\alpha}{2})} \rho] \approx 2^{j-j_{0}(1-\frac{\alpha}{2})}
  \end{align*}
  for any $j$ with $\alpha j_{0} \leq j \leq j_{0}$. Since the exponent is
  monotone in $j$, we only need to consider the case when $\alpha j_{0}=j$ and
  thus the behavior of $(\alpha-(1-\frac{\alpha}{2}))$. This exponent is less or
  equal than zero if and only if $\alpha \leq \frac{2}{3}$ and less or equal
  than $-\alpha$ if and only if $\alpha \leq \frac{2}{5}$. The thresholds in
  $j$, respectively $\alpha$, are thus indeed optimal.
\end{proof}
\begin{rem}
  \label{rem:onalpha}
  We remark that $\langle \chi_{[0,2^{-j})}, \phi_{p_{l}} \rangle = 2^{-j}$ for
  all $j=0,\dots, 2^{j-1}$. Hence, $\sum_{0 \leq l \leq 2^{j}-1} \phi_{p_{l}}=
  2^{j}1_{[0,2^{-j})}$. Thus, if $\alpha j_{0}$ is an integer, we obtain that
  \begin{align*}
    \sum_{2^{ \alpha j_{0}} \leq l \leq
    2^{j_{0}}-1} \phi_{p_{l}} = 2^{j_{0}}1_{[0,2^{-j_{0}})} -  2^{\alpha j_{0}}1_{[0,2^{-\alpha j_{0}})},
  \end{align*}
  which provides a more immediate view of the geometric mixing size. However,
  this explicit characterization is much less simple if $2^{\alpha j_{0}}$ is
  not a power of two and also intransparent in terms of the $H^{-1}$ norm.
\end{rem}

\section{The Continuous Setting}
\label{sec:proofs}
In the following we show that, with minor modifications, the estimates of the
dyadic setting of Section \ref{sec:walsh} persist in the continuous setting.
Here, additional key challenges are given by the lack of orthogonality and thus
non-existence of spaces like $E_{j}$.
\subsection{Definition of Mixing Scales}
\label{sec:definitions}
\begin{defi}
  \label{defi:scales}
  If $\rho \in \dot H^{-1}(\R^{n})$, we call $\|\rho\|_{\dot H^{-1}}$ the \emph{analytic
    mixing scale}.
  \\
  
  Let $\phi \in L^{1}(\R^{n})$ with $\phi\geq 0$ and $\|\phi\|_{L^{1}}=1$ and denote
  $\phi_{r}(x):=\frac{\phi(rx)}{r^{n}}$. Then for any $\rho \in
  L^{1}_{\loc}(\R^{n})$ and every $\epsilon_{0}>0$, we introduce the (nonlinear)
  functionals
  \begin{align*}
    \mathfrak{g}_{\epsilon_{0}}[\rho] := \|\phi_{r} * \rho\|_{L^{\infty}}.
  \end{align*}
  Here, the most common choice is given by $\phi=\frac{1}{|B_{1}|}1_{B_{1}}$, in
  which case
  \begin{align*}
    \mathfrak{g}_{\epsilon_{0}}[\rho] = \sup_{r> \epsilon_{0}, x \in \R^{n}} |B_r(x)|^{-1} \left| \int_{B_r(x)} \rho(y) dy \right|.
  \end{align*}

  We say that a function $\rho \in L^\infty(\Omega) \cap L^1_{loc}(\Omega)$ is
  \emph{geometrically mixed} by a factor $\kappa \in (0,1)$ up to scale
  $\epsilon_0>0$ if
  \begin{align*}
    \mathfrak{g}_{\epsilon_{0}}[\rho] \leq \kappa \|\rho\|_{L^{\infty}}.
  \end{align*}
  For a given $\kappa$, we denote
  \begin{align*}
    \mathcal{G}_{\kappa}[\rho]= \inf \{\epsilon_{0}>0: \mathfrak{g}_{\epsilon_{0}}[\rho] \leq \kappa \|\rho\|_{L^{\infty}}\},
  \end{align*}
  the infimum over all such $\epsilon_0$ and call it the \emph{geometric mixing
    scale}.
\end{defi}

We remark that, by Hölder's inequality,
\begin{align*}
  \mathfrak{g}_{\epsilon_{0}}[\rho] \leq \|\rho\|_{L^{\infty}} \|\phi_{r}\|_{L^{1}} =\|\rho\|_{L^{\infty}},
\end{align*}
and that by Lebesgue integration theory
\begin{align*}
  \lim_{\epsilon_{0} \downarrow 0}\mathfrak{g}_{\epsilon_{0}}[\rho] = \|\rho\|_{L^{\infty}}.
\end{align*}
The functionals $\mathfrak{g}$ and the geometric mixing scale $\mathcal{G}$ thus
describe the competition between cancellations in Hölder's inequality and
convergence of Dirac sequences.

\begin{rem}
  \label{rem:regularity_assumptions}
  The reason for our more general formulation in terms of $\phi \in L^{1}$ is
  that in later estimates optimality is easier to phrase and establish if we
  additionally require that $\phi \in H^{1}$. In particular, by duality
  \begin{align*}
    \sup_{\rho \in H^{-s}: \|\rho\|_{H^{-s}}\leq 1}
    \mathfrak{g}_{1}[\rho]= \|\phi\|_{H^{s}}
  \end{align*}
  and hence an estimate like \eqref{eq:4} is not possible unless $\phi$ is
  sufficiently regular. However, for most estimates this only poses technical
  challenges (c.f. Lemma \ref{lem:withFouriersupport}) in terms of the control
  of certain Fourier projections. In the dyadic setting of Section
  \ref{sec:walsh} these complications could be avoided by using orthogonality
  properties.
\end{rem}

\subsection{Comparison Estimates}

Our main results are given by the following theorems and corollaries.
\begin{thm}[Estimates]
  \label{thm:main}
  Let $\rho \in L^{2}\cap \dot H^{-1}(\R^{n})$ and suppose that $\phi \in H^{
    \lambda}\cap L^{1}$ for some $\lambda \in [0,1]$. Then for any
  $\epsilon_{0}>0$ it holds that
  \begin{align*}
    \mathfrak{g}_{\epsilon_{0}}[\rho] &\leq C \frac{\|\phi\|_{H^{\lambda}}}{\|\phi\|_{L^{1}}} \epsilon_{0}^{-n/2-\lambda}\|\rho\|_{H^{-\lambda}}.
  \end{align*}
  As a consequence, the geometric mixing scale of $\rho$ can be estimated by
  \begin{align*}
    \mathcal{G}_{\kappa}(\rho) & \leq  \left( \frac{C_{\lambda, \phi}\|\rho\|_{H^{-\lambda}}}
                                 {\kappa \|\rho\|_{L^{\infty}}}\right)^{1/(n/2+\lambda)}.
  \end{align*}
  In particular, if $\lambda=1$, then
  \begin{align}
    \label{eq:4}
    \begin{split}
      \mathfrak{g}_{\epsilon_{0}}[\rho] &\leq C \epsilon_{0}^{-n/2-1} \|\rho\|_{H^{-1}}, \\
      \mathcal{G}_{\kappa}(\rho) &\leq C \left( \frac{\|\rho\|_{H^{-1}}}{\kappa
          \|\rho\|_{L^{\infty}}} \right)^{\frac{1}{n/2+1}}.
    \end{split}
  \end{align}

  Conversely, if $\rho$ is compactly supported and $C$ denotes the measure of a
  $1$-neighborhood of the support, then for every $\epsilon_{0} \leq 1$ it holds
  that
  \begin{align}
    \label{eq:5}
    \|\rho\|_{H^{-1}} \leq C \mathfrak{g}_{\epsilon_{0}}[\rho] + C \epsilon_{0}\|\rho\|_{L^{2}}.
  \end{align}
\end{thm}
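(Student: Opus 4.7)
The plan is to prove both estimates by Fourier-side computations, trading the orthogonality enjoyed in the dyadic setting of Section \ref{sec:walsh} for Plancherel and careful rescaling. For the upper bound on $\mathfrak{g}_{\epsilon_{0}}[\rho]$, I would estimate $\|\phi_{r} * \rho\|_{L^{\infty}}$ by Fourier inversion in terms of the Fourier $L^{1}$ norm of its transform and then split with weights $\langle \xi \rangle^{\lambda}$ via Cauchy--Schwarz:
\[
\|\phi_{r} * \rho\|_{L^{\infty}} \leq \|\hat{\phi}(r\xi)\hat{\rho}(\xi)\|_{L^{1}_{\xi}} \leq \|\langle \xi \rangle^{\lambda}\hat{\phi}(r\xi)\|_{L^{2}}\,\|\rho\|_{H^{-\lambda}}.
\]
The key computation is then the rescaling $\eta = r\xi$, which together with the elementary inequality $\langle \eta/r \rangle \leq r^{-1}\langle \eta \rangle$ for $r \leq 1$ yields $\|\langle \xi \rangle^{\lambda}\hat{\phi}(r\xi)\|_{L^{2}} \leq r^{-n/2-\lambda}\|\phi\|_{H^{\lambda}}$. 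Since this bound is monotone decreasing in $r$, taking the supremum over $r \geq \epsilon_{0}$ reduces to the case $r = \epsilon_{0}$. The estimate for $\mathcal{G}_{\kappa}(\rho)$ then follows by solving $\mathfrak{g}_{\epsilon_{0}}[\rho] \leq \kappa \|\rho\|_{L^{\infty}}$ for $\epsilon_{0}$.

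For the converse estimate \eqref{eq:5}, the plan is to split
\[
\rho = \phi_{\epsilon_{0}} * \rho + (\rho - \phi_{\epsilon_{0}} * \rho)
\]
into a mollified piece and a high-frequency remainder. The first piece is supported in the $\epsilon_{0}$-thickening of $\supp \rho$, which for $\epsilon_{0} \leq 1$ lies in the $1$-neighborhood $K$ of $\supp \rho$, and its $L^{\infty}$ norm is at most $\mathfrak{g}_{\epsilon_{0}}[\rho]$ by definition; combining $\|u\|_{L^{2}(K)} \leq |K|^{1/2}\|u\|_{L^{\infty}}$ with the embedding $\|u\|_{H^{-1}} \leq \|u\|_{L^{2}}$ then gives $\|\phi_{\epsilon_{0}} * \rho\|_{H^{-1}} \leq |K|^{1/2}\mathfrak{g}_{\epsilon_{0}}[\rho]$. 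For the remainder I would use that, whenever $\phi$ has finite first moment (for instance $\phi = |B_{1}|^{-1}1_{B_{1}}$), the symbol satisfies $|1 - \hat{\phi}(\eta)| \leq C\min(1,|\eta|)$, so that the multiplier $(1 - \hat{\phi}(\epsilon_{0}\xi))\langle \xi \rangle^{-1}$ is bounded by $C\epsilon_{0}$ uniformly in $\xi$. Plancherel then yields $\|\rho - \phi_{\epsilon_{0}} * \rho\|_{H^{-1}} \leq C\epsilon_{0}\|\rho\|_{L^{2}}$, and summing the two contributions gives \eqref{eq:5}.

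The main obstacle, as opposed to the dyadic setting of Section \ref{sec:walsh}, is precisely the absence of a sharp frequency projector: the mollification $\phi_{\epsilon_{0}}*\,\cdot\,$ is not an orthogonal projection, so the low- and high-frequency pieces must be tracked simultaneously rather than via disjoint subspaces $E_{j}$, and in the forward direction the Fourier rescaling must be carried out uniformly over all scales $r \geq \epsilon_{0}$. Fortunately the relevant bound is monotone in $r$, so no additional loss is incurred beyond the dyadic case. Optimality of the exponents $\alpha = \tfrac{2}{n+2}$ and $\beta = \tfrac{2}{n+4}$ of Theorem \ref{thm:continuous} then follows by transferring the dyadic Lemma \ref{lem:dyadic_optimal} to the continuous setting, i.e.\ by an appropriate adaptation of the sawtooth construction recalled in Section \ref{sec:AMtoGM}.
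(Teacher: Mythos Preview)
Your proposal is correct and follows essentially the same route as the paper. The forward bound is precisely the duality pairing $|\langle \phi_{r},\rho\rangle|\leq \|\phi_{r}\|_{H^{\lambda}}\|\rho\|_{H^{-\lambda}}$ (which you make explicit on the Fourier side) combined with the rescaling $\|\phi_{r}\|_{H^{\lambda}}\leq C r^{-n/2-\lambda}\|\phi\|_{H^{\lambda}}$, and the converse estimate is exactly the splitting $\rho=\phi_{\epsilon_{0}}*\rho+(\rho-\phi_{\epsilon_{0}}*\rho)$ with the multiplier bound $|1-\hat\phi(\epsilon_{0}\xi)|\langle\xi\rangle^{-1}\leq C\epsilon_{0}$ on the remainder, which the paper records separately as Lemmas~\ref{lem:H1bygm} and~\ref{lem:1}.
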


We note that in the estimate \eqref{eq:5}, assuming
$\mathcal{G}_{\kappa}[\rho]\leq \epsilon_{0}$ only yields a bound of
$\|\rho\|_{H^{-1}}\leq \kappa$. Indeed, as explored in Section
\ref{sec:modifiedGMintro} for fixed $\kappa$ it is possible to find a sequence
$\rho_{n}$ such that
\begin{align*}
  \mathcal{G}_{\kappa}[\rho_{n}] \rightarrow 0, \\
  \|\rho_{n}\|_{H^{-1}} \geq \kappa,
\end{align*}
where the failure of decay of $\|\rho_{n}\|_{H^{-1}}$ was due to the persistence
of structures at scale $\kappa$ (c.f. also Theorem \ref{thm:continuous} and the
remarks thereafter).

As discussed in Remark \ref{rem:regularity_assumptions}, if $\phi=c 1_{B_1}$, we
can not choose $\lambda=1$ since $\phi \not \in H^{1}$. However, we may recover
this estimate upon imposing further conditions on $\rho$ (c.f. Lemma
\ref{lem:withFouriersupport}).

As an application of the above estimates we derive comparability of both mixing
scales. That is, while not equivalent (semi-)norms, smallness of one scale
implies smallness of the other with a necessary loss in the exponents.
For easier reference, we restate Theorem \ref{thm:continuous}
\begin{thm}[Comparison of mixing scales]
  Let $\rho \in L^{2}(\R^{n})$ and $\|\rho\|_{L^{2}}\leq 1$ and let $\phi \in
  H^{1}$. Then for all $0<\epsilon \leq 1$ it holds that:
  \begin{enumerate}
  \item If $\mathfrak{g}_{\epsilon}[\rho]\leq \epsilon$ and $\rho$ is supported
    in $B_{1}$, then also $\|\rho\|_{H^{-1}}\leq C \epsilon$.
  \item If $\|\rho\|_{H^{-1}}\leq \epsilon$, then also
    $\mathfrak{g}_{\tilde{\epsilon}}[\rho] \leq C$ for all $\tilde{\epsilon}\geq
    \epsilon^{\alpha}$ and $\mathfrak{g}_{\epsilon'}[\rho]\leq C \epsilon'$ for
    all $\epsilon'\geq \epsilon^{\beta}$, where $\alpha=\frac{2}{n+2}$ and
    $\beta=\frac{2}{n+4}$ depends only on the dimension. In particular,
    supposing additionally that $\|\rho\|_{L^{\infty}}=1$, it follows that
    \begin{align*}
      \mathcal{G}_{C}[\rho]&\leq \tilde{\epsilon}, \\
      \mathcal{G}_{C\epsilon'}[\rho]&\leq \epsilon',
    \end{align*}
  \end{enumerate}
  These estimates are optimal in the powers of $\epsilon$.
\end{thm}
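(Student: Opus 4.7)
The plan is to read off both upper bounds directly from the $\lambda=1$ case of Theorem \ref{thm:main} and then to establish optimality by adapting the one-dimensional sawtooth of Section \ref{sec:AMtoGM} and the dyadic optimizer of Lemma \ref{lem:dyadic_optimal} to $\R^{n}$. For part (1), I would take $\epsilon_{0}=\epsilon$ in estimate \eqref{eq:5}, so that
\begin{align*}
\|\rho\|_{H^{-1}} \leq C\mathfrak{g}_{\epsilon}[\rho] + C\epsilon \|\rho\|_{L^{2}} \leq 2C\epsilon,
\end{align*}
where the constant absorbs the measure of the $1$-neighborhood of $B_{1}$ and $\|\rho\|_{L^{2}}\leq 1$ is given.

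For part (2), the first line of \eqref{eq:4} reads $\mathfrak{g}_{\epsilon_{0}}[\rho] \leq C \epsilon_{0}^{-n/2-1}\|\rho\|_{H^{-1}}$. Inserting $\|\rho\|_{H^{-1}}\leq \epsilon$ and setting $\epsilon_{0}=\tilde{\epsilon}$, the bound $\mathfrak{g}_{\tilde{\epsilon}}[\rho]\leq C$ holds as soon as $\tilde{\epsilon}^{n/2+1}\geq \epsilon$, i.e.\ $\tilde{\epsilon}\geq \epsilon^{2/(n+2)}=\epsilon^{\alpha}$; similarly the improved bound $\mathfrak{g}_{\epsilon'}[\rho]\leq C\epsilon'$ amounts to $(\epsilon')^{n/2+2}\geq \epsilon$, i.e.\ $\epsilon'\geq \epsilon^{2/(n+4)}=\epsilon^{\beta}$. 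Under the normalization $\|\rho\|_{L^{\infty}}=1$, the two inequalities for $\mathcal{G}_{C}[\rho]$ and $\mathcal{G}_{C\epsilon'}[\rho]$ then follow immediately from the definition of $\mathcal{G}_{\kappa}$ as an infimum.

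Optimality in the powers of $\epsilon$ is already visible in one dimension: the sawtooth of Section \ref{sec:AMtoGM} satisfies $\|\rho\|_{H^{-1}}\approx \epsilon$ together with $\mathfrak{g}_{r}[\rho]\approx 1$ for $r\lesssim \epsilon^{2/3}$ and $\mathfrak{g}_{r}[\rho]\approx r$ only down to $r\approx \epsilon^{2/5}$, which matches $\alpha,\beta$ at $n=1$. For general $n$, I would either tensorize this sawtooth or transcribe the dyadic optimizer of Lemma \ref{lem:dyadic_optimal}, which by Remark \ref{rem:onalpha} takes the explicit form
\begin{align*}
2^{j_{0}}1_{[0,2^{-j_{0}})}-2^{\alpha j_{0}}1_{[0,2^{-\alpha j_{0}})}
\end{align*}
after suitable rescaling, and then build an $n$-dimensional counterpart by tensor product. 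A direct computation should then yield $\|\rho\|_{H^{-1}}\approx \epsilon$ while forcing $\mathfrak{g}_{r}[\rho]\gtrsim 1$ for $r\ll \epsilon^{\alpha}$ and $\mathfrak{g}_{r}[\rho]\gtrsim r$ for $r\ll \epsilon^{\beta}$, ruling out any improvement of $\alpha,\beta$. The linear exponent in part (1) is saturated by any smooth bump of amplitude $\epsilon$.

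The main obstacle is precisely the passage from the dyadic thresholds $2/3,2/5$ to the dimension-dependent exponents $\alpha=2/(n+2)$ and $\beta=2/(n+4)$: the $n$-dimensional optimizer must balance a single large-scale bump, which drives the geometric functional from below, against small-scale oscillations, which are cheap in $H^{-1}$, in a way that saturates the scaling encoded in $\epsilon_{0}^{-n/2-1}$ in \eqref{eq:4}. Establishing the matching lower bounds on $\mathfrak{g}_{r}$, rather than merely upper bounds, for this construction is the only genuinely nontrivial step; everything else reduces to a direct application of Theorem \ref{thm:main}.
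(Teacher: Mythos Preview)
Your approach is essentially identical to the paper's: the upper bounds in parts (1) and (2) are obtained exactly as you describe, by plugging $\epsilon_{0}=\epsilon^{t}$ into \eqref{eq:4} and reading off the thresholds $t=\frac{2}{n+2}$ and $t=\frac{2}{n+4}$, while part (1) is \eqref{eq:5} with $\epsilon_{0}=\epsilon$ (the paper isolates this as Lemma~\ref{lem:1}). For optimality the paper does precisely what you propose---it transcribes the dyadic optimizer of Remark~\ref{rem:onalpha} into the continuous function $2^{j_{0}}1_{[0,2^{-j_{0}}]}-2^{j_{1}}1_{[0,2^{-j_{1}}]}$ and computes its $H^{-1}$ norm directly (Lemma~\ref{lem:continuous_optimal})---but only carries this out in dimension $n=1$; your proposed extension to general $n$ by tensorization is not in the paper, and you are right to flag that as the one nontrivial step, since averages over balls do not factor and the $H^{-1}$ norm of a tensor product is not simply the product of the one-dimensional norms.
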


In Section \ref{sec:walsh} we have seen that the loss of exponents is caused by the
$(L^{1}, L^{\infty})$ normalization in the geometric scale instead of $L^{2}$ normalization
for the analytic scale and can also be seen as being due the Sobolev embedding into
$L^{\infty}$. In this continuous setting, this is much less transparent due
to the lack of spaces $E_{j}$. The necessity of the loss is established in Lemma
\ref{lem:continuous_optimal} in analogy with Lemma \ref{lem:dyadic_optimal} of
the dyadic case.
\begin{cor}
  Let $u:\R_{+}\times \R^{n}\rightarrow \R$ be such that
  \begin{align*}
    \|\rho(t)\|_{H^{-1}} \geq C e^{-Ct}.
  \end{align*}
  Then, it follows that
  \begin{align*}
    \mathfrak{g}_{e^{-Ct}}[\rho(t)] \geq C e^{-Ct}.
  \end{align*}
\end{cor}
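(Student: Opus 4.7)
The plan is a direct application of the converse estimate \eqref{eq:5} of Theorem \ref{thm:main}. We work under the implicit hypothesis that $\rho(t)$ is compactly supported (uniformly, or with at worst polynomial growth of the support) and that $\|\rho(t)\|_{L^{2}}$ stays bounded in time. Both properties are natural for the passive scalar setting of \eqref{eq:8}, where $L^{p}$ norms are preserved under the divergence-free flow.

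Write the hypothesis as $\|\rho(t)\|_{H^{-1}} \geq A e^{-\lambda t}$ for positive constants $A,\lambda$, set $M := \sup_{t} \|\rho(t)\|_{L^{2}}$, and let $C^{*}$ denote the constant appearing in \eqref{eq:5}. Plugging \eqref{eq:5} in at time $t$ with a scale $\epsilon_{0}(t)$ to be chosen, one has
\begin{align*}
A e^{-\lambda t} \;\leq\; \|\rho(t)\|_{H^{-1}} \;\leq\; C^{*}\, \mathfrak{g}_{\epsilon_{0}(t)}[\rho(t)] \,+\, C^{*} M\, \epsilon_{0}(t).
\end{align*}
The key choice is to take $\epsilon_{0}(t) = e^{-\mu t}$ with $\mu > \lambda$ slightly larger than $\lambda$. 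Then $C^{*} M e^{-\mu t}$ is bounded by $\tfrac{A}{2} e^{-\lambda t}$ once $t$ exceeds some $t_{0}=t_{0}(A,C^{*},M,\mu-\lambda)$, and this term can be absorbed into the left-hand side, yielding
\begin{align*}
\frac{A}{2} e^{-\lambda t} \;\leq\; C^{*}\, \mathfrak{g}_{e^{-\mu t}}[\rho(t)].
\end{align*}
Dividing through gives the desired lower bound $\mathfrak{g}_{e^{-\mu t}}[\rho(t)] \geq c\, e^{-\lambda t}$ with $c = A/(2 C^{*})$. The loose use of the symbol $C$ in the statement is consistent with this conclusion: the rate inside the scale and the rate in the lower bound need not be equal, but both remain exponential with explicit positive constants.

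The main obstacle is simply bookkeeping the constants and the implicit hypotheses: one must ensure $\|\rho(t)\|_{L^{2}}$ and the support of $\rho(t)$ are under control to invoke \eqref{eq:5}, and one must accept that the exponential rate in the scale may be marginally worse than the rate in the lower bound (which is why choosing $\mu>\lambda$ is essential — otherwise the second term on the right of \eqref{eq:5} and the lower bound are of the same order and nothing can be concluded without a size comparison between $A$ and $C^{*}M$). Once the monotonicity $\mathfrak{g}_{\epsilon_{0}}$ non-increasing in $\epsilon_{0}$ and the absorption step are set up correctly, the argument is essentially a one-line consequence of Theorem \ref{thm:main}.
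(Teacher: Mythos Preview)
Your proposal is correct and is essentially the paper's intended argument: the corollary is an immediate consequence of the converse estimate \eqref{eq:5} (equivalently, the contrapositive of part (1) of the restated Theorem~\ref{thm:continuous}), and the paper does not supply a separate proof. Your careful separation of the two exponential rates $\mu>\lambda$ is more scrupulous than the paper's loose reuse of the symbol $C$, but the underlying mechanism---bounding $\|\rho\|_{H^{-1}}$ from above by $\mathfrak{g}_{\epsilon_{0}}$ plus $\epsilon_{0}\|\rho\|_{L^{2}}$ and then absorbing the latter---is exactly the same.
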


\begin{rem}
  For example $u(t)$ may be given by the solution of a passive or active scalar
  problem, as in \cite{Crippa17}. As noted in Section \ref{sec:examples}, if one
  were instead to consider $\mathfrak{g}_{\kappa}[\rho(t)]$ for fixed $\kappa$,
  this functional is not lower semi-continuous and there is no reason to expect any lower
  bound.
\end{rem}

\begin{proof}[Proof of Theorem \ref{thm:main}]
  As a first consistency check, we verify that the estimates of Theorem
  \ref{thm:main} scale correctly.

  Let thus $\delta>0$ and consider $\rho_{\delta}(x)=\rho(\delta x)$. For
  simplicity of notation, we consider $\lambda=\frac{1}{2}$. Then it holds that
  \begin{align*}
    \mathfrak{g}_{\epsilon_{0}}[\rho_{\delta}]&= \mathfrak{g}_{ \delta \epsilon_{0}}[\rho] \leq C (\delta \epsilon_{0})^{-n/2-1/4} \|\rho\|_{L^{2}}^{\frac{1}{2}}\|\rho\|_{H^{-1}}^{\frac{1}{2}}\\
                                              &= C  \delta^{-n/2-1/4} \epsilon_{0}^{-n/2-1/4} \|\rho\|_{L^{2}}^{\frac{1}{2}}\|\rho\|_{H^{-1}}^{\frac{1}{2}}.
  \end{align*}
  On the other hand, estimating directly, we obtain
  \begin{align*}
    \mathfrak{g}_{\epsilon_{0}}[\rho_{\delta}] &\leq C \epsilon_{0}^{-n/2-1/4} \|\rho_{\delta}\|_{L^{2}}^{\frac{1}{2}}\|\rho_{\delta}\|_{H^{-1}}^{\frac{1}{2}}\\
                                               &= C \epsilon_{0}^{-n/2-1/4} (\delta^{-n/2}\|\rho\|_{L^{2}})^{1/2} (\delta^{-n/2-1/2}\|\rho\|_{L^{2}})^{1/2}\\
                                               &= C  \delta^{-n/2-1/4} \epsilon_{0}^{-n/2-1/4} \|\rho\|_{L^{2}}^{\frac{1}{2}}\|\rho\|_{H^{-1}}^{\frac{1}{2}}.
  \end{align*}
  Note that this estimate is further invariant under replacing $\rho(x)$ by $\mu
  \rho(x)$ for any $\mu>0$. Hence, we may in addition choose $\mu=\delta^{n/2}$
  to ensure $L^{2}$ normalization.
  \\

  Let us now consider the proof of the theorem and let $\lambda \in [0,1]$ and
  $\phi \in H^{\lambda}$ be given. Then we may use duality to estimate
  \begin{align*}
    \mathfrak{g}_{r}[\rho] \leq \frac{\|\phi_{r}\|_{H^{\lambda}}}{\|\phi_{r}\|_{L^{1}}} \|\rho\|_{H^{-\lambda}}
  \end{align*}
  and use interpolation to control
  \begin{align*}
    \|\rho\|_{H^{-\lambda}} \leq C_{\lambda} \|\rho\|_{L^{2}}^{1-\lambda}\|\rho\|_{H^{-1}}^{\lambda}.
  \end{align*}
  We further note that by scaling
  \begin{align*}
    \frac{\|\phi_{r}\|_{H^{\lambda}}}{\|\phi_{r}\|_{L^{1}}}  \leq C r^{-n/2-\lambda} \frac{\|\phi\|_{H^{\lambda}}}{\|\phi\|_{L^{1}}}
  \end{align*}
  for $r\leq 1$ (for the homogeneous Sobolev norms we would have equality).
 
  Combining both estimates, we obtain
  \begin{align*}
    \mathfrak{g}_{r}[\rho] \leq C\frac{\|\phi\|_{H^{\lambda}}}{\|\phi\|_{L^{1}}} r^{-n/2-\lambda}\|\rho\|_{L^{2}}^{1-\lambda}\|\rho\|_{H^{-1}}^{\lambda}.
  \end{align*}

  For the estimate on the geometric mixing scale, we have to show that for given
  $\kappa$ and all $\epsilon_{0} \geq \mathcal{G}[\rho]$
  \begin{align*}
    \mathfrak{g}_{\epsilon_{0}}[\rho] \leq  \kappa \|\rho\|_{L^{\infty}}.
  \end{align*}
  In view of the previous calculation this is implied by showing that
  \begin{align*}
    C_{\lambda,\phi}\epsilon_{0}^{-n/2-\lambda/2}\|\rho\|_{L^{2}}^{1-\lambda}\|\rho\|_{H^{-1}}^{\lambda} \leq \kappa \|\rho\|_{L^{\infty}},
  \end{align*}
  with $C_{\lambda,\phi}=C(\frac{\|\phi\|_{H^{\lambda}}}{\|\phi\|_{L^{1}}})$.
  Dividing by $\kappa \|\rho\|_{L^{\infty}}>0$ and taking a power
  $1/(n/2+\lambda/2)$, we obtain that this holds if
  \begin{align*}
    \left( \frac{C_{\lambda}\|\rho\|_{L^{2}}^{1-\lambda}\|\rho\|_{H^{-1}}^{\lambda}}{\kappa \|\rho\|_{L^{\infty}}}\right)^{1/(n/2+\lambda/2)} \leq \epsilon_{0}.
  \end{align*}
  We thus obtain an upper bound on the geometric mixing scale by the
  left-hand-side, which concludes the proof.
\end{proof}

\begin{proof}[Proof of Theorem \ref{thm:continuous}]
  We proceed as in the proof of Corollary \ref{cor:dyadic_estimate} and consider
  $\epsilon_{0}=\epsilon^{t}$ for $t \in (0,1)$ to be determined. Then the
  estimate \eqref{eq:4} of Theorem \ref{thm:main} implies that
  \begin{align*}
    \mathfrak{g}_{\epsilon^{t}} \leq C \epsilon^{t(-n/2-1)+1},
  \end{align*}
  which yields the critical cases
  \begin{align*}
    t(-n/2-1)+1=0 \Leftrightarrow t = \frac{2}{n+2} \\
    t(-n/2-1)+1=t \Leftrightarrow t = \frac{2}{n+4}
  \end{align*}
\end{proof}

The following lemmata consider questions of optimality and the removal of small
scales discussed in Section \ref{sec:modifiedGMintro}.

\begin{lem}[Counter example in the continuous setting]
  \label{lem:continuous_optimal}
  There exists a sequence $\epsilon \downarrow 0$ and $\rho=\rho(\epsilon) \in
  L^{2}(\R)$ with
  \begin{align*}
    \|\rho\|_{H^{-1}}\leq \epsilon,
  \end{align*}
  but such that for every $\alpha<\frac{2}{3}$, it holds that
  \begin{align*}
    \mathfrak{g}_{\epsilon^{\alpha}}[\rho] \rightarrow \infty. 
  \end{align*}
  as $\epsilon \downarrow 0$ and such that for all $\beta< \frac{2}{5}$
  \begin{align*}
    \epsilon^{-\beta} \mathfrak{g}_{\epsilon^{\beta}}[\rho] \rightarrow \infty.
  \end{align*}
  That is, the exponents in Theorem \ref{thm:continuous} are optimal.
\end{lem}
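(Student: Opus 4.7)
Proof proposal:

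The plan is to parallel the end of the proof of Lemma \ref{lem:dyadic_optimal} in the continuous setting. In the dyadic case, optimality was witnessed by summing a specific range of wave packets and then rescaling to normalize the $h^{-1}$ norm. Here I would take as building block the Lunasin--Mazzucato--Thiffeault sawtooth $\rho_{\delta}$ recalled in Section \ref{sec:AMtoGM}, which for each parameter $\delta\in(0,1)$ satisfies $\|\rho_{\delta}\|_{L^{\infty}}=1$, $\|\rho_{\delta}\|_{H^{-1}}\approx \delta$, $\rho_{\delta}\equiv 1$ on $B_{\delta^{2/3}}(0)$, and $\mathfrak{g}_{r}[\rho_{\delta}]\leq Cr$ once $r\geq \delta^{2/5}$.

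Given the target $\epsilon$, I introduce two parameters: an internal scale $\delta=\epsilon^{s}$ and an amplitude $\mu$, and set
\begin{equation*}
  \rho(\epsilon) := \mu\,\rho_{\delta}, \qquad \mu := \epsilon/\delta = \epsilon^{1-s}.
\end{equation*}
Then $\|\rho\|_{H^{-1}}\approx \mu\delta=\epsilon$, so the hypothesis is satisfied. The geometric functional scales linearly, $\mathfrak{g}_{r}[\rho]=\mu\,\mathfrak{g}_{r}[\rho_{\delta}]$, and inherits the three-regime structure of $\mathfrak{g}_{r}[\rho_{\delta}]$ (constant $1$ below $\delta^{2/3}$; intermediate decay up to $\delta^{2/5}$; bounded by $Cr$ beyond). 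Testing at $r=\epsilon^{\alpha}$ and collecting exponents as in the algebra following Remark \ref{rem:onalpha}, one finds that $\mathfrak{g}_{\epsilon^{\alpha}}[\rho]$ is a power of $\epsilon$ whose sign is governed by the critical combination $3\alpha/2-1$, and analogously $\epsilon^{-\beta}\mathfrak{g}_{\epsilon^{\beta}}[\rho]$ is governed by $5\beta/2-1$. These two combinations cross zero precisely at $\alpha=2/3$ and $\beta=2/5$, and the resulting sign dictates divergence versus decay on either side of the threshold.

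The main technical point is choosing $s$ so that the construction witnesses the threshold for every $\alpha$ (and every $\beta$) in the claimed range simultaneously: as in the dyadic proof, one takes $s$ as close to the critical value as possible while still keeping $\mu$ divergent. Concretely, one picks a sequence $s=s(\epsilon)$ tending to $1$ slowly (for instance $s=1+1/\log\log(1/\epsilon)$), so that $\mu\to\infty$ while the transition scales $\epsilon^{2s/3}$ and $\epsilon^{2s/5}$ approach $\epsilon^{2/3}$ and $\epsilon^{2/5}$, yielding the stated divergence across the whole critical range. The bookkeeping mirrors the last paragraph of the proof of Lemma \ref{lem:dyadic_optimal}, with the $-3/2$ and $-5/2$ exponents encoding the Sobolev embedding $H^{1}\hookrightarrow L^{\infty}$ in dimension one; it is precisely this embedding cost, identical to the $\ell^{\infty}$ versus weighted-$\ell^{2}$ comparison in the dyadic setting, that pins the sharp exponents at $2/3$ and $2/5$.
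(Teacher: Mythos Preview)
Your construction is genuinely different from the paper's and, as written, has a real gap.

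The paper does \emph{not} use the sawtooth from Section~\ref{sec:AMtoGM}. Instead it takes
\[
  \rho \;=\; 2^{j_{0}}\,1_{[0,2^{-j_{0}}]} \;-\; 2^{j_{1}}\,1_{[0,2^{-j_{1}}]},\qquad j_{1}=a\,j_{0},
\]
computes $\|\rho\|_{H^{-1}}\approx 2^{-j_{1}/2}$ directly via the Fourier transform, and then rescales by $2^{-j_{0}(1-a/2)}$. The crucial feature of this building block is that its $L^{\infty}$ norm is huge ($\approx 2^{j_{0}}$) while its $L^{2}$ norm is small ($\approx 2^{j_{0}/2}$ before rescaling, and $\approx \epsilon^{(1-a)/2}\to 0$ afterwards). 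This is exactly what lets $\mathfrak{g}_{\epsilon^{\gamma}}[\rho']\approx \epsilon^{\,2/3-\gamma}\to\infty$ for every $\gamma>2/3$ while keeping $\|\rho'\|_{L^{2}}\le 1$.

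Your sawtooth $\rho_{\delta}$ has the opposite profile: $\|\rho_{\delta}\|_{L^{\infty}}=1$ and $\|\rho_{\delta}\|_{L^{2}}\approx 1$. Consequently $\mathfrak{g}_{r}[\rho_{\delta}]\le 1$ for every $r$, and the \emph{only} way your family can exhibit $\mathfrak{g}_{\epsilon^{\alpha}}[\rho]\to\infty$ is through the prefactor $\mu=\epsilon^{1-s}$ with $s>1$. But then
\[
  \|\rho\|_{L^{2}} \;=\; \mu\,\|\rho_{\delta}\|_{L^{2}} \;\approx\; \mu \;\to\; \infty,
\]
so your family violates the standing hypothesis $\|\rho\|_{L^{2}}\le 1$ of Theorem~\ref{thm:continuous}. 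Multiplying by a scalar rescales $\mathfrak{g}_{r}$, $\|\cdot\|_{H^{-1}}$ and $\|\cdot\|_{L^{2}}$ by the \emph{same} factor, so it cannot witness sharpness of any exponent in a scale-invariant inequality; the divergence you produce is entirely an artefact of letting the $L^{2}$ budget explode. In other words, your argument proves the literal statement of the lemma (which omits the $L^{2}$ bound) but not the optimality claim that is its purpose.

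A secondary point: for the $\beta$--statement you need the behaviour of $\mathfrak{g}_{r}[\rho_{\delta}]$ in the intermediate window $\delta^{2/3}\lesssim r\lesssim 1$, namely $\mathfrak{g}_{r}[\rho_{\delta}]\approx \delta^{2/3}/r$ (this follows from $\bigl|\int_{a}^{b}\rho_{\delta}\bigr|=|u(b)-u(a)|\le 2\|u\|_{L^{\infty}}\approx \delta^{2/3}$). You invoke it implicitly but never state or check it; the three bullet-point properties you cite from Section~\ref{sec:AMtoGM} do not cover this regime.

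To repair the argument you would have to replace the sawtooth by a building block that is $L^{2}$--small yet $L^{\infty}$--large at the relevant scale---which is precisely what the paper's difference of two $L^{1}$--normalised indicators accomplishes.
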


\begin{proof}[Proof of Lemma \ref{lem:continuous_optimal}]
  We follow a similar strategy as in the proof of Lemma \ref{lem:dyadic_optimal}
  in the dyadic setting. Let $\epsilon=2^{-j_{0}}$ and consider
  \begin{align*}
    \rho=2^{j_{0}}1_{[0,2^{-j_{0}}]}- 2^{j_{1}}1_{[0,2^{-j_{1}}]},
  \end{align*}
  with $j_{1}=\alpha j_{0}$, $\alpha \in (0,1)$. Then for any $j_{1}\leq j \leq
  j_{0}$, we obtain
  \begin{align*}
    2^{j}\int_{0}^{2^{-j}}\rho dx = 2^{j} (2^{j_{0}} \min(2^{-j}, 2^{-j_{0}}) + 2^{j_{1}} \min(2^{-j}, 2^{-j_{1}})) = 2^{j} - 2^{j_{1}},
  \end{align*}
  which is comparable to $2^{j}$ as long as $j > j_{1}$.

  We further make the following \underline{claim:}
  \begin{align}
    \|\rho\|_{H^{-1}} \approx 2^{-j_{1}/2}.
  \end{align}
  Suppose that this claim holds and consider
  \begin{align*}
    \rho':=2^{-j_{0}(1-\alpha/2)}\rho,
  \end{align*}
  which satisfies
  \begin{align*}
    \|\rho'\|_{H^{-1}} &\approx 2^{-j_{0}}, \\
    \mathfrak{g}_{2^{-j}}[\rho'] &\approx 2^{j-j_{0}(1-\alpha/2)}= 2^{-j_{0}(1-\frac{3}{2}\alpha)}.
  \end{align*}
  We hence conclude as in the proof of Lemma \ref{lem:dyadic_optimal}.

  It hence remains to show the claim. We directly compute
  \begin{align*}
    \hat{\rho}(\xi)= \int_{\R} e^{i\xi x} 2^{j_{0}}1_{[0,2^{-j_{0}}]}- 2^{j^{1}}1_{[0,2^{-j_{1}}]} dx = \frac{e^{i\xi 2^{-j_{0}}}-1}{i\xi 2^{-j_{0}}} - \frac{e^{i\xi 2^{-j_{1}}}-1}{i\xi 2^{-j_{1}}}.
  \end{align*}
  Both difference quotients are uniformly bounded by $1$ and we distinguish the
  regions based on the size of $\xi 2^{-j_{0}}$ and $\xi 2^{-j_{1}}$.

  If $|\xi| > c 2^{j_{0}}$, we may roughly estimate
  \begin{align*}
    \int_{\{\xi: |\xi| > c 2^{j_{0}}\}} \frac{|\hat{\rho}(\xi)|^{2}}{|\xi|^{2}} \leq \int_{c 2^{j_{0}}}^{\infty} \frac{4}{|\xi|^{2}} d \xi \leq C 2^{-j_{0}},
  \end{align*}
  which is a very small contribution.

  If $|\xi| < c 2^{j_{1}}$ with $c$ small, we may use a Taylor expansion to
  estimate the error of the difference quotient:
  \begin{align*}
    \frac{e^{i\xi 2^{-j_{0}}}-1}{i\xi 2^{-j_{0}}} - \frac{e^{i\xi 2^{-j_{1}}}-1}{i\xi 2^{-j_{1}}} = 1 + \mathcal{O}(\xi 2^{-j_{0}}) -1 + \mathcal{O}(\xi 2^{-j_{1}}) \\
    = \mathcal{O}(\xi 2^{-j_{1}}).
  \end{align*}
  The $H^{-1}$ energy for this segment can hence be estimated by
  \begin{align*}
    \int_{\{\xi: |\xi| < c 2^{j_{1}}\}} \frac{|\hat{\rho}(\xi)|^{2}}{|\xi|^{2}} \leq  C \int_{\{\xi: |\xi| < c 2^{j_{1}}\}} 2^{-2j_{1}} \leq C 2^{-j_{1}}
  \end{align*}
  Finally, if $cj_{1} \leq j \leq c j_{0}$ one difference quotient is about $1$,
  while the other oscillates, but is bounded by $1$. Thus the contribution can
  be estimated as
  \begin{align*}
    \int_{\{\xi: c 2^{j_{1}}\leq |\xi| < c 2^{j_{0}}\}} \frac{|\hat{\rho}(\xi)|^{2}}{|\xi|^{2}} \approx \int_{c2^{j_{1}}}^{c 2^{j_{0}}} \frac{1}{|\xi|^{2}} d\xi \approx 2^{-j_{1}}.
  \end{align*}
\end{proof}

\begin{lem}
  \label{lem:H1bygm}
  Let $\rho \in L^{2}(\R^{n})$ and $r>0$ and define
  $\rho_{r}:=\frac{1}{|B_{r}|}1_{B_{r}}* \rho$. Then $\rho - \rho_{r} \in
  H^{-1}$ and there exists $C>0$ depending only on the dimension $n$ such that
  \begin{align*}
    \|\rho-\rho_{r}\|_{H^{-1}} \leq C r \|\rho\|_{L^{2}}.
  \end{align*}
\end{lem}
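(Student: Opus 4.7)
The natural approach is Fourier analytic. Writing $\rho_{r}=\phi_{r}*\rho$ with $\phi_{r}=\frac{1}{|B_{r}|}1_{B_{r}}$, I have $\widehat{\rho-\rho_{r}}(\xi)=(1-\widehat{\phi_{r}}(\xi))\hat{\rho}(\xi)$, and by the scaling $\phi_{r}(x)=r^{-n}\phi_{1}(x/r)$ one obtains $\widehat{\phi_{r}}(\xi)=m(r\xi)$, where $m(\zeta):=\widehat{\phi_{1}}(\zeta)$. Hence, by Plancherel,
\[
\|\rho-\rho_{r}\|_{H^{-1}}^{2}=\int_{\R^{n}}\frac{|1-m(r\xi)|^{2}}{1+|\xi|^{2}}|\hat{\rho}(\xi)|^{2}\,d\xi,
\]
so the whole question reduces to showing that the Fourier multiplier $\xi\mapsto(1-m(r\xi))(1+|\xi|^{2})^{-1/2}$ is bounded in $L^{\infty}$ by $Cr$.

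The key pointwise estimate to establish is
\[
|1-m(\zeta)|\leq C\min(|\zeta|,1),
\]
with $C$ depending only on $n$. For the small-frequency regime I would write
\[
1-m(\zeta)=\frac{1}{|B_{1}|}\int_{B_{1}}\bigl(1-e^{-i\zeta\cdot x}\bigr)\,dx
\]
and use $|1-e^{-it}|\leq|t|$ together with $|x|\leq 1$ inside $B_{1}$ to conclude $|1-m(\zeta)|\leq|\zeta|$. For the large-frequency regime I would simply invoke the trivial bound $|m(\zeta)|\leq\|\phi_{1}\|_{L^{1}}=1$, which gives $|1-m(\zeta)|\leq 2$.

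Splitting the integral at $|\xi|=1/r$, I then estimate the multiplier separately on each piece. On $\{|\xi|\leq 1/r\}$ one has $|1-m(r\xi)|\leq r|\xi|$, so
\[
\frac{|1-m(r\xi)|^{2}}{1+|\xi|^{2}}\leq\frac{r^{2}|\xi|^{2}}{1+|\xi|^{2}}\leq r^{2}.
\]
On $\{|\xi|>1/r\}$ one has $|1-m(r\xi)|\leq 2$ and $1+|\xi|^{2}\geq |\xi|^{2}>1/r^{2}$, so the multiplier is bounded by $4r^{2}$. Combining and pulling out the uniform bound,
\[
\|\rho-\rho_{r}\|_{H^{-1}}^{2}\leq Cr^{2}\int|\hat{\rho}(\xi)|^{2}\,d\xi=Cr^{2}\|\rho\|_{L^{2}}^{2},
\]
which is the claim. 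There is no serious obstacle here; the only mildly delicate point is verifying the linear bound on $1-m$ near $\zeta=0$ uniformly in $n$, and this follows immediately from the first-order Taylor estimate $|1-e^{-it}|\leq|t|$ applied inside the averaging integral over $B_{1}$.
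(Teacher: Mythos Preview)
Your proof is correct and follows essentially the same route as the paper: compute the Fourier multiplier $1-\widehat{\phi_{r}}(\xi)=1-m(r\xi)$, establish the pointwise bound $|1-m(\zeta)|\leq C\min(|\zeta|,1)$ from $|1-e^{-it}|\leq|t|$ and $\|\phi_{1}\|_{L^{1}}=1$, and then observe that $\min(r^{2}|\xi|^{2},4)/|\xi|^{2}\leq Cr^{2}$ uniformly. The paper phrases the small-$\zeta$ estimate via continuity of $e^{ix_{1}}$ rather than the explicit inequality $|1-e^{-it}|\leq|t|$, but the content is identical.
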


\begin{proof}[Proof of Lemma \ref{lem:H1bygm}]
  We consider the Fourier transform of $\phi_{r}$. Let thus $r>0$ and $\xi \in
  \R^{n}$ be given and consider
  \begin{align*}
    \frac{c_{n}}{r^{n}}\int_{B_{r}} e^{ix \cdot \xi} dx = \frac{1}{r^{n}}\int_{B_{r}}e^{i x_{1}|\xi|} = \frac{c_{n}}{|r\xi|^{n}}\int_{B_{r|\xi|}} e^{i x_{1}} dx =: \psi(|r\xi|). 
  \end{align*}
  We remark that $\psi$ can explicitly computed in terms of Bessel functions
  (c.f. the proof of Theorem \ref{thm:lower_geom}). Since $\psi(\cdot)$ is an average of
  $e^{ix_{1}}$ it follows that $|\psi|\leq 1$. Furthermore, by continuity of
  $e^{ix_{1}}$
  \begin{align*}
    |\psi(|r\xi|)-1|\leq \frac{c_{n}}{|r\xi|^{n}}\int_{B_{r|\xi|}} |e^{i x_{1}}-1| dx\leq C r|\xi|.
  \end{align*}
  as $r|\xi| \downarrow 0$.

  Hence, we can control
  \begin{align*}
    |\mathcal{F}(\rho_{r}-\rho)|^{2}= |(\psi(r|\xi|)-1)\hat{\rho}(\xi)|^{2}
    \leq \min(2, C r|\xi|) |\hat{\rho}|^{2}.
  \end{align*}
  and can estimate the $H^{-1}$ energy of $\rho-\phi_{r}*\rho$ by
  \begin{align*}
    \int \frac{\min(C^{2} |\xi r|^{2}, 4) }{|\xi|^{2}} |\hat{\rho}(\xi)|^{2}\leq C^{2}r^{2} \int |\hat{\rho}(\xi)|^{2} = C^{2} r^{2}\|\rho\|_{L^{2}}^{2}.
  \end{align*}
  
\end{proof}

\begin{lem}
  \label{lem:1}
  Let $\rho \in L^{2}(\R^{n})$ with $\|\rho\|_{L^{2}}\leq 1$ be supported in
  $B_{1}(0)$ be such that $\|\frac{1}{c_{n}\epsilon^{n} }1_{B_{\epsilon}} *
  \rho\|_{L^{\infty}} \leq \epsilon$, for some $0< \epsilon < 1$. Then there
  exist $C$ depending only on the dimension $n$ such that the analytic mixing
  scale satisfies
  \begin{align*}
    \|\rho\|_{H^{-1}} \leq C \epsilon.
  \end{align*}
\end{lem}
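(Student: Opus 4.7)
The plan is to decompose $\rho$ into a low-frequency piece, obtained by averaging on the scale $\epsilon$, and a high-frequency remainder. The remainder is controlled by Lemma \ref{lem:H1bygm}, while the averaged piece has small $L^\infty$ norm by hypothesis and is compactly supported, which lets us control it in $L^2$ and hence in $H^{-1}$ via the trivial Plancherel bound.

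More precisely, I would set $\rho_\epsilon := \frac{1}{|B_\epsilon|} 1_{B_\epsilon} * \rho$ and write $\rho = (\rho - \rho_\epsilon) + \rho_\epsilon$. For the first term, Lemma \ref{lem:H1bygm} immediately gives
\begin{align*}
  \|\rho - \rho_\epsilon\|_{H^{-1}} \leq C \epsilon \|\rho\|_{L^2} \leq C \epsilon.
\end{align*}
For the second term, I observe that $\rho_\epsilon$ is supported in a neighborhood of $\supp \rho$, namely in $B_{1+\epsilon}(0) \subset B_2(0)$ since $\epsilon \leq 1$. Consequently, combining the Fourier-side inequality $\|f\|_{H^{-1}} \leq \|f\|_{L^2}$ with Hölder on the compact set $B_2(0)$ yields
\begin{align*}
  \|\rho_\epsilon\|_{H^{-1}} \leq \|\rho_\epsilon\|_{L^2} \leq |B_2|^{1/2} \|\rho_\epsilon\|_{L^\infty} \leq |B_2|^{1/2} \epsilon,
\end{align*}
where the last step uses the hypothesis. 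Combining these two bounds by the triangle inequality produces the desired estimate $\|\rho\|_{H^{-1}} \leq C \epsilon$ with $C$ depending only on the dimension $n$.

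There is no serious obstacle here: the key ingredients, namely Lemma \ref{lem:H1bygm} and the passage from $L^\infty$ to $L^2$ via compact support, are already in hand. The only mild subtlety is that the hypothesis controls the convolution only at the single scale $\epsilon$ (not at all larger scales, as in the definition of $\mathfrak{g}_{\epsilon_0}$), but this is exactly enough: the decomposition above uses $\rho_\epsilon$ at precisely that scale, and the high-frequency error estimate depends on the scale $\epsilon$ only through the prefactor produced by Lemma \ref{lem:H1bygm}.
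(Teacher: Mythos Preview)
Your proof is correct and follows essentially the same route as the paper: split $\rho = (\rho-\rho_\epsilon)+\rho_\epsilon$, apply Lemma~\ref{lem:H1bygm} to the first piece, and bound the second piece in $L^2$ via its compact support and the $L^\infty$ hypothesis. Your write-up is in fact slightly cleaner, since you correctly record the exponent $|B_2|^{1/2}$ in the H\"older step.
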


\begin{proof}[Proof of Lemma \ref{lem:1}]
  By the triangle inequality
  \begin{align*}
    \|\rho\|_{H^{-1}} \leq \|\rho_{\epsilon}\|_{H^{-1}} + \|\rho - \rho_{\epsilon}\|_{H^{-1}}.
  \end{align*}
  The second term can be estimated by $C \epsilon \| \rho \|_{L^{2}}\leq C
  \epsilon$ using Lemma \ref{lem:H1bygm}, while for the first term we control
  \begin{align*}
    \|\rho_{\epsilon}\|_{H^{-1}} \leq \|\rho_{\epsilon}\|_{L^{2}} \leq |\supp(\rho_{\epsilon})| \|\rho_{\epsilon}\|_{L^{\infty}} \leq 2^{n} \epsilon, 
  \end{align*}
  where we estimated the support of $\rho_{\epsilon}$ by $B_{1+\epsilon} \subset
  B_{2}$.
\end{proof}

We remark that since the definition of $\mathfrak{g}$ is given in terms of local
Lebesgue spaces some support or decay condition is necessary.

Indeed, consider $\rho \in L^{2}(\R)$ which is compactly supported in $(0,1)$.
Then for any $N \in \N$ we can define $\sigma(x):= \sum_{j=0}^{N} \rho(x + 2j)$.
Due to the disjoint supports for all $\epsilon<\frac{1}{2}$ it holds that
\begin{align*}
  \|\sigma_{\epsilon}\|_{L^{\infty}}&= \|\rho_{\epsilon}\|_{L^{\infty}}, \\
  \|\sigma\|_{L^{2}} &= \sqrt{N} \|\rho\|_{L^{2}}.
\end{align*}
Furthermore, while $H^{-1}$ is non-local, we obtain that $\|\sigma\|_{H^{-1}}
\approx \sqrt{N} \|\rho\|_{H^{-1}}$ for $N$ large.

The following lemma establishes the converse control of the geometric scale by
the analytic scale. As noted in Remark \ref{rem:regularity_assumptions}, here
regularity of $\phi$ allows for easier proofs. However, under additional
assumptions, $\phi$ can also be chosen less regular such as $1_{B_{1}}$.

\begin{lem}
  \label{lem:withFouriersupport}
  Let $\rho \in H^{-1}(\R^{n})$ with $\supp(\hat{\rho})\subset B_{r^{-1}}(0)$
  and let $\phi \in L^{1}$. Then there exists a constant $C$ depending of $\rho$
  such that
  \begin{align*}
    \mathfrak{g}_{\epsilon_{0}}[\rho] \leq C r^{-n/2-1}\|\rho\|_{H^{-1}}. 
  \end{align*}
  If we require that $\phi \in H^{1}$, then the support assumption can be
  omitted.
\end{lem}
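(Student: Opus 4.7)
The plan is to handle the two cases via two different but elementary tools: Bernstein-type estimates when $\rho$ has compact Fourier support, and duality with $H^{1}$ when $\phi$ is regular. The point of the lemma is that in the first case the bound is \emph{independent} of the averaging scale $\epsilon_{0}$, because the natural scale of $\rho$ itself already limits the oscillation.

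First, I would handle the case $\phi \in L^{1}$ with $\supp(\hat{\rho}) \subset B_{r^{-1}}$. By Young's inequality and the scale-invariance of the $L^{1}$ norm under $\phi \mapsto \phi_{r}$,
\begin{align*}
  \|\phi_{\epsilon_{0}} * \rho\|_{L^{\infty}} \leq \|\phi_{\epsilon_{0}}\|_{L^{1}} \|\rho\|_{L^{\infty}} = \|\phi\|_{L^{1}} \|\rho\|_{L^{\infty}},
\end{align*}
so the bound becomes $\epsilon_{0}$-independent and it suffices to estimate $\|\rho\|_{L^{\infty}}$. Since $\hat{\rho}$ lives in $B_{r^{-1}}$, one inversion of the Fourier transform together with Cauchy--Schwarz in frequency gives
\begin{align*}
  \|\rho\|_{L^{\infty}} \leq \|\hat{\rho}\|_{L^{1}(B_{r^{-1}})} \leq C r^{-n/2} \|\rho\|_{L^{2}},
\end{align*}
the standard Bernstein estimate. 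A second use of the Fourier support (inserting $(1+|\xi|^{2})/(1+|\xi|^{2})$ and using $(1+|\xi|^{2}) \leq C r^{-2}$ on $B_{r^{-1}}$ when $r \leq 1$) yields $\|\rho\|_{L^{2}} \leq C r^{-1} \|\rho\|_{H^{-1}}$. Combining the two,
\begin{align*}
  \|\phi_{\epsilon_{0}} * \rho\|_{L^{\infty}} \leq C \|\phi\|_{L^{1}} r^{-n/2-1} \|\rho\|_{H^{-1}},
\end{align*}
uniformly in $\epsilon_{0}$. Taking the supremum over $r>\epsilon_{0}$ in the definition of $\mathfrak{g}_{\epsilon_{0}}$ preserves the bound since the right-hand side is scale-uniform.

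For the second statement, when $\phi \in H^{1}$ we argue instead by duality: for each $x$,
\begin{align*}
  |(\phi_{\epsilon_{0}} * \rho)(x)| = |\langle \phi_{\epsilon_{0}}(x - \cdot), \rho \rangle| \leq \|\phi_{\epsilon_{0}}\|_{H^{1}} \|\rho\|_{H^{-1}},
\end{align*}
and the scaling $\|\phi_{\epsilon_{0}}\|_{H^{1}} \leq C \epsilon_{0}^{-n/2-1} \|\phi\|_{H^{1}}$ for $\epsilon_{0} \leq 1$ reproduces the estimate of Theorem~\ref{thm:main}. No Fourier support assumption on $\rho$ is needed here because the $H^{1}$ regularity of $\phi$ makes $\phi_{\epsilon_{0}}(x-\cdot)$ a legitimate $H^{1}$ test function.

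No real obstacle is expected; the only mildly delicate point is book-keeping the scalings under $\phi \mapsto \phi_{r}$ to verify that the two Bernstein bounds combine to the advertised exponent $r^{-n/2-1}$, and to confirm that the estimate is indeed uniform in $\epsilon_{0}$ (so the supremum defining $\mathfrak{g}_{\epsilon_{0}}$ causes no loss). The philosophical content is that the Fourier support assumption trades regularity of $\phi$ for regularity/spectral confinement of $\rho$, which is exactly the substitution absent in the dyadic Walsh setting of Section~\ref{sec:walsh} where orthogonality handles the issue automatically.
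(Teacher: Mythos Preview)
Your proposal is correct, and the second part (the $\phi \in H^{1}$ case) matches the paper's argument exactly: both use the duality pairing $|\langle \phi_{\epsilon_{0}}(x-\cdot),\rho\rangle| \leq \|\phi_{\epsilon_{0}}\|_{H^{1}}\|\rho\|_{H^{-1}}$ together with the scaling of $\|\phi_{\epsilon_{0}}\|_{H^{1}}$.

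For the compact Fourier support case the two arguments differ slightly in organization. The paper stays on the Fourier side throughout: it writes $\phi_{r}*\rho(x)=\int e^{ix\xi}\overline{\hat{\phi_{r}}}(\xi)\hat{\rho}(\xi)\,d\xi$, bounds $\|\hat{\phi_{r}}\|_{L^{\infty}}\leq\|\phi\|_{L^{1}}$, and then applies a single weighted Cauchy--Schwarz $\int_{B_{r^{-1}}}|\hat{\rho}|\leq \|\xi\|_{L^{2}(B_{r^{-1}})}\|\hat{\rho}/\xi\|_{L^{2}}$ to land directly on the (homogeneous) $H^{-1}$ norm with the factor $r^{-n/2-1}$. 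You instead decouple via Young's inequality, reducing to $\|\rho\|_{L^{\infty}}$, and then apply two Bernstein steps ($L^{\infty}\to L^{2}$ and $L^{2}\to H^{-1}$) on $\rho$ alone. Both routes are the same Cauchy--Schwarz-on-compact-Fourier-support mechanism; your version has the minor advantage of making the $\epsilon_{0}$-uniformity completely transparent (since $\|\phi_{\epsilon_{0}}\|_{L^{1}}$ is manifestly scale-invariant), while the paper's one-step version is marginally shorter and yields the homogeneous $\dot H^{-1}$ norm directly without the restriction $r\leq 1$.
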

\begin{proof}[Proof of Lemma \ref{lem:withFouriersupport}]
  Using Plancherel's theorem we compute
  \begin{align*}
    \phi_{r}*\rho (x)= \int \phi_{r}(x-y) \rho(y) dy=\int e^{ix \xi} \overline{\hat{\phi_{r}}}(\xi) \hat{\rho}(\xi) d\xi.
  \end{align*}
  Now recall that $\phi_{r}(x)=\frac{\phi(rx)}{r^{n}}$ has constant $L^{1}$ norm
  and thus $\|\hat{\phi_{r}}\|_{L^{\infty}}\leq \|\phi_{r}\|_{L^{1}}$. We may
  hence control further by
  \begin{align*}
    &\quad \|\hat{\phi}_{r}\|_{L^{\infty}} \|\xi\|_{L^{2}(\supp (\hat{\rho}))} \left\|\frac{\hat{\rho}}{\xi}\right\|_{L^{2}}  \\
    & \leq r^{-n/2-1} \|\rho\|_{H^{-1}}, 
  \end{align*}
  where we used the support of $\hat{\rho}$ and that
  $\|\hat{\phi_{r}}\|_{L^{\infty}}\leq \|\phi_{r}\|_{L^{1}}=1$.

  If $\phi \in H^{1}$, we can instead directly estimate 
  \begin{align*}
    \|\phi_{r}*\rho\|_{L^{\infty}} &\leq \|\phi_{r}\|_{H^{1}} \|\rho\|_{H^{-1}} \\
    &= r^{-n/2-1}\|\phi\|_{H^{1}} \|\rho\|_{H^{-1}}.
  \end{align*}
\end{proof}

Hence, the failure of estimates with $s \geq 1/2$ is due to the interaction of
the ``tail'' of $\hat{\rho}$ for $|\xi|\geq r^{-1}$.
We remark that in the dyadic setting of Section \ref{sec:walsh} this
complication does not arise, since our seminorms project on spaces $E_{j}$ of
lower frequency.

\section{Damping Rates in Transport-type Equations}
\label{sec:transport}
In this second part of our article, we are interested in the time dependence of
mixing scales when $\rho(t)$ evolves under a passive scalar equation
\begin{align*}
  \dt \rho + v \cdot \nabla \rho =0
\end{align*}
for a given divergence-free velocity field $v$. In particular, we are interested optimal decay rates of
$\|\rho(t)\|_{H^{-1}}$ and $\mathcal{G}_{\kappa(t)}[\rho(t)]$.

In Subsection \ref{sec:free-transp-equat}, we consider the special case when
$\rho(t)$ is advected by a given specific, regular, incompressible velocity
field. This study is motivated by recent work of Crippa, Luc\`a and Schulze,
\cite{Crippa17}, who study the time behavior of both mixing scales under the
evolution
\begin{align*}
  \rho(t,r,\theta)= \rho_0(r,\theta -tr),
\end{align*}
where $r, \theta$ are polar coordinates on $\R^{2}\setminus \{0\}$, $\rho_{0}
\in L^{1} \cap L^{\infty}$ and the angular averages $\langle \rho_0
\rangle_{\theta}=0$ identically vanish. Adapting conformal polar coordinates
$\theta, e^{s}=r$ this setting shares strong similarities with problems of
inviscid damping in fluid dynamics.

Further examples of interest here are given by:
\begin{itemize}
\item Perturbations around shear flow solutions of Euler's equations on $\T
  \times \R$. In \cite{Zill3}, \cite{Zill5}, we show that if $U(y)$ is, roughly
  speaking, close to affine, the linearized Euler equations in vorticity
  formulation asymptotically scatter in $H^{s}$ to the transport problem with
  $v=(U(y),0)$. Using different, spectral methods \cite{Zhang2015inviscid} have
  further shown similar results under weaker conditions.
\item When considering circular flows, \cite{Zill6}, \cite{CZZ17}, and
  $v=u(r)e_{\theta}$ is an annular region or on $\R^{2}$ we similarly obtain
  stability, damping and scattering in weighted spaces and for more degenerate
  profiles.
\item In the setting of Landau damping \cite{bedrossian2013landau} similarly one
  observes scattering to a transport problem.
\end{itemize}

The following results on the free transport equation
\begin{align*}
  \dt \rho(t,x,y) - y \p_{x} \rho &=0,\\
  (t,x,y) &\in (0,\infty) \times \T^{n} \times \R^{n},
\end{align*}
hence also extend by scattering to asymptotics for further equations exhibiting
\emph{phase-mixing}.

Finally, we discuss optimal mixing and stirring for more general passive scalar
problems. Here, a recent active area of research, \cite{alberti2014exponential},
\cite{seis2017quantitative}, \cite{crippa2017cellular}, \cite{bressan2003lemma},
\cite{crippa2008estimates} is given by the study of upper and lower bounds on
decay rates of mixing scales for solutions of \eqref{eq:8}
\begin{align*}
  \dt \rho + v \cdot \nabla \rho =0,
\end{align*}
where $v$ may be chosen arbitrarily under given constraints such as
$\|v(t)\|_{W^{1,p}}\leq 1$. Using our comparison estimates of Theorem
\ref{thm:continuous}, we discuss implications of some
known results.

\subsection{On Sharp Decay Rates for the Free Transport Equation}
\label{sec:free-transp-equat}

Our main results for the analytic mixing scale are summarized in Theorem
\ref{thm:mixing}, which we restate in the following for easier reference. Using the estimates of Theorem \ref{thm:main} we also obtain control of
the geometric scale, which we study in Theorem \ref{thm:lower_geom}.
\begin{thm}
  Let $H^{\sigma}H^{s}= H^{\sigma}(\T^{n}; H^{s}(\R^{n}))$ denote the Hilbert space with norm
  \begin{align*}
  \|u\|_{H^{\sigma}_{x}H^{s}_{y}}^{2}= \sum_{k \in \Z^{n}} \langle k \rangle^{2\sigma} \int_{\R^{n}} \langle \eta \rangle^{2s}|\tilde{u}(k,\eta)|^{2} d\eta. 
  \end{align*}
  In the following, let $0< s \leq 1$, $u_{0} \in L^{2}(\T^{n}; H^{s}(\R^{n}))$
  with $\int_{\T^{n}} u_{0}(x,y)dx =0$, and let
  \begin{align*}
    u(t,x,y)= u_{0}(t,x-ty,y),
  \end{align*}
  be the solution of the free transport problem.
  \begin{enumerate}
  \item There exists $C_{s}>1$ such that for all $t \geq 1$ and all initial data
    \begin{align*}
      \|u(t)\|_{H^{-1}} \leq C t^{-s}\|u_{0}\|_{H^{-s}(\T^{n}; H^{s}(\R^{n}))}.
    \end{align*}    
  \item Let $\alpha_{j}>0$ with $\|(\alpha_{j})_{j}\|_{l^{2}}=1$. Then there
    exist $c>0$, a sequence of times $t_{j}\rightarrow \infty$ and initial data
    $u_{0}$ such that 
    \begin{align*}
      \|u(t_{j})\|_{H^{-1}} \geq c \alpha_{j} t_{j}^{-s} \|u_{0}\|_{H^{-s}(\T^{n}; H^{s}(\R^{n}))}.
    \end{align*}
  \item There exists no non-trivial initial data $u_{0} \in L^{2}(\T^{n}; H^{s}(\R^{n}))$ such that
    \begin{align*}
      \|u(t_{j})\|_{H^{-1}} \geq c t_{j}^{-s} \|u_{0}\|_{H^{-s}(\T^{n}; H^{s}(\R^{n}))}
    \end{align*}
    along some sequence $t_{j}\rightarrow \infty$.
  \end{enumerate}
\end{thm}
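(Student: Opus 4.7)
The plan is to diagonalize everything in Fourier. Taking Fourier series in $x$ and Fourier transform in $y$, the free transport flow becomes a linear shear in frequency space, $\tilde{u}(t,k,\eta)=\tilde{u}_{0}(k,\eta+kt)$, and after the substitution $\eta\mapsto \eta-kt$ the functional of interest reads
\begin{align*}
  \|u(t)\|_{L^{2}H^{-1}}^{2} \;=\; \sum_{k\neq 0} \int_{\R^{n}} \frac{|\tilde{u}_{0}(k,\eta)|^{2}}{\langle \eta-kt\rangle^{2}}\,d\eta,
\end{align*}
where the restriction $k\neq 0$ encodes the mean-zero hypothesis $\int_{\T^n} u_0(x,y)\,dx=0$. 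All three assertions reduce to the analysis of this integral.

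For (1), I would prove the pointwise weight estimate
\begin{align*}
  \langle \eta-kt\rangle^{-2} \;\leq\; C\,(|k|t)^{-2s}\,\langle \eta\rangle^{2s}, \qquad |k|\geq 1,\; t\geq 1,\; 0\leq s\leq 1,
\end{align*}
by a two-regime argument: when $|\eta|\leq |k|t/2$ one has $\langle \eta-kt\rangle \gtrsim |k|t$, and the constraint $s\leq 1$ together with $|k|t\geq 1$ lets one pass from $(|k|t)^{-2}$ to $(|k|t)^{-2s}$; when $|\eta|\geq |k|t/2$ one has $\langle \eta\rangle \gtrsim |k|t$ and the trivial estimate $\langle \eta-kt\rangle^{-2}\leq 1$ suffices. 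Plugging this bound into the integral formula and summing in $k$ produces $\|u(t)\|_{L^{2}H^{-1}}^{2}\leq C t^{-2s}\|u_{0}\|_{H^{-s}H^{s}}^{2}$, giving part (1).

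For (2), I would build a saturating example by concentrating Fourier mass on unit-size bumps along a single resonance ray. Fix any $k_{0}\in\Z^{n}\setminus\{0\}$, choose $t_{j}\to\infty$ growing so rapidly (e.g.\ iterated exponentials) that for $j'\neq j$ the separation $|k_{0}(t_{j}-t_{j'})|$ is vastly larger than $t_{j}$, and set $\tilde{u}_{0}(k,\eta)=\delta_{k,k_{0}}\sum_{j}a_{j}\chi_{j}(\eta)$, where $\chi_{j}$ is a unit $L^{2}$-bump centred at $\eta=k_{0}t_{j}$. Calibrating $a_{j}\approx \alpha_{j}t_{j}^{-s}$ normalizes $\|u_{0}\|_{H^{-s}H^{s}}^{2}\approx \sum_{j}\alpha_{j}^{2}=1$, and at $t=t_{j}$ the weight $\langle \eta-k_{0}t_{j}\rangle^{-2}$ is of order $1$ on the $j$-th bump, so this bump alone contributes $\gtrsim \alpha_{j}^{2}t_{j}^{-2s}$ to the squared norm. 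The rapid growth of $(t_{j})$ forces the cross contributions $\alpha_{j'}^{2}t_{j'}^{-2s}\langle k_{0}(t_{j}-t_{j'})\rangle^{-2}$ to be negligible, and taking square roots yields the desired lower bound.

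Finally (3) follows by dominated convergence applied to $t^{2s}\|u(t)\|_{L^{2}H^{-1}}^{2}$: the pointwise estimate from (1) supplies the $t$-independent, summable envelope $C|k|^{-2s}\langle \eta\rangle^{2s}|\tilde{u}_{0}(k,\eta)|^{2}$, while for each fixed $(k,\eta)$ with $k\neq 0$ one has $\langle \eta-kt\rangle\sim |k|t$, so $t^{2s}/\langle \eta-kt\rangle^{2}\to 0$ for $s<1$; consequently $t^{2s}\|u(t)\|_{L^{2}H^{-1}}^{2}\to 0$ for every non-trivial $u_{0}$, ruling out the inequality in (3). The main obstacle I anticipate is the construction in (2): quantifying the heuristic ``disjointness'' of the bumps, i.e.\ checking that $\sum_{j'\neq j}\alpha_{j'}^{2}t_{j'}^{-2s}\langle k_{0}(t_{j}-t_{j'})\rangle^{-2}=o(\alpha_{j}^{2}t_{j}^{-2s})$, which is what forces the hypothesis that the $t_{j}$'s may be chosen to grow super-polynomially relative to the prescribed $\alpha_{j}$'s. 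A secondary subtlety is the endpoint $s=1$ in (3), where the pointwise limit of the integrand equals $|\tilde{u}_{0}(k,\eta)|^{2}/|k|^{2}$ and a separate approximation argument is needed.
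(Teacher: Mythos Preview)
Your proposal is correct and in places more direct than the paper's argument.

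For part (1) the paper establishes the endpoints $s=0$ (unitarity) and $s=1$ (one integration by parts via duality) and then interpolates; your pointwise weight estimate $\langle\eta-kt\rangle^{-2}\leq C(|k|t)^{-2s}\langle\eta\rangle^{2s}$ bypasses the interpolation step and gives the result in one stroke.

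For part (2) the paper uses essentially your construction but with one simplification that eliminates what you flag as the ``main obstacle'': it takes the bumps $\chi_{j}$ to be \emph{compactly supported} in $\eta$ (a fixed $C_{c}^{\infty}$ profile translated to $k_{0}t_{j}$). Then any separation $\min_{j\neq j'}|t_{j}-t_{j'}|>4$ already makes the bumps exactly disjoint, so both the evaluation of $\|u_{0}\|_{H^{-s}H^{s}}$ and the lower bound at $t=t_{j}$ proceed by strict orthogonality with no cross terms at all. Super-polynomial growth of the $t_{j}$ is unnecessary; any sequence with gaps bounded below works.

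For part (3) the two approaches genuinely differ. The paper argues by contradiction via level sets $A_{C,t}=\{(k,\eta):t^{s}/(\langle\eta-kt\rangle\langle\eta\rangle^{s})\geq\sqrt{C/2}\}$: the assumed lower bound forces a fixed fraction of the $H^{-s}H^{s}$-mass of $u_{0}$ onto each $A_{C,t_{j}}$, and after passing to a subsequence along which the $A_{C,t_{j}}$ are pairwise disjoint one gets $\|u_{0}\|_{H^{-s}H^{s}}^{2}\geq\infty$. Your dominated convergence argument is more elementary and transparent, with the envelope coming for free from part (1).

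Your caution at $s=1$ is well placed: there the pointwise limit of $t^{2}\langle\eta-kt\rangle^{-2}$ is $|k|^{-2}$, not zero, and for data such as $u_{0}=e^{ix_{1}}\psi(y)$ with $\psi\in C_{c}^{\infty}$ one has $t\,\|u(t)\|_{L^{2}H^{-1}}\to\|\psi\|_{L^{2}}>0$, so the inequality in (3) actually \emph{holds} for such $u_{0}$. The paper's argument has the same gap at $s=1$: a fixed neighbourhood of $(e_{1},0)$ lies in $A_{C,t}$ for all large $t$, so the sets cannot be made disjoint. Statement (3) should be read for $0<s<1$.
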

In the second statement, $t_{j}$ can always be chosen larger and more rapidly
increasing. For instance, we may chose $t_{j}=\exp(\exp(\dots \exp(j)))$ and
$\alpha_{j}=\frac{1}{j}=\ln(\ln(\dots \ln(t_{j})))$ as iterated exponential and
logarithms. Informally stated, the theorem hence shows that algebraic decay
rates can be achieved along a subsequence up to an arbitrarily small loss.
Conversely, the third statement shows that this loss is necessary and that the
lower estimate is sharp in this sense.
\begin{proof}[Proof of Theorem \ref{thm:mixing}]
  We note that, for all $t$ the map $L^{2} \ni u_{0}\mapsto u(t) \in L^{2}$ is
  unitary and thus the statement holds for $s=0$. Furthermore, we may use the explicit solution of the free transport
  problem and Plancherel's identity with respect to $x$ to obtain that
  \begin{align*}
    \|u(t)\|_{H^{-1}} &= \sup_{\phi: \|\phi\|_{H^{1}}\leq 1} \sum_{k\neq 0} \int \overline{\hat \phi}(k,y) e^{ikty} \hat{u}_{0}(k,y) dy \\
                      &=  \sup_{\phi: \|\phi\|_{H^{1}}\leq 1} \int e^{ikty}\p_{y} \frac{\overline{\hat \phi}(k,y) \hat{u}_{0}(k,y)}{ikt} \\
    &\leq \|u_{0}\|_{H^{-1}(\T^{n}; H^{1}(\R^{n}))},
  \end{align*}
  and thus establish the result for $s=1$.
  The result for $0<s<1$ then is obtained by interpolation.\\

  For the second statement, we make use of resonant times and frequencies.
  Roughly speaking, if $u_{0}$ is frequency localized at $(k,\eta)$ (with
  respect to $x$ and $y$), then free transport in physical space is also a
  transport equation in Fourier space and $u(t)$ will be frequency localized
  near $(k,\eta+kt)$. Hence, if $k \neq 0$, $\eta$ and $k$ are parallel and $t=-\eta/k$ the frequency
  localization is near zero and hence any $H^{\sigma}$ norm is equivalent to the
  $L^{2}$ norm for such a function.
  
  Let thus $\phi \in C_{c}^{\infty}$ be supported in a ball of radius $2$ and
  $L^{2}$ normalized and let $(\alpha_{j})_{j} \in l^{2}$ with
  $\|(\alpha_{j})_{j}\|_{l^{2}}=1$. Suppose further that $t_{j}$, to be
  determined precisely later, satisfies $\min_{j_{1}\neq
    j_{2}}|t_{j_{1}}-t_{j_{2}}| > 4$ and $\min |t_{j}|>4$. Then we define the
  function $u_{0} \in H^{s}(\T^{n}\times\R^{n})$ by its Fourier transform:
  \begin{align*}
    \tilde{u_{0}}(k,\eta)= \delta_{k=e_{1}}\sum_{j \in \N} \alpha_{j} \langle t_{j} \rangle^{-s} \phi(\eta-t_{j}e_{1}).
  \end{align*}
  We remark that the Dirac in $k$ corresponds to assuming periodicity in $x$.
  This construction also readily extends to the whole space case, $\R^{n} \times
  \R^{n}$ if
  $\delta_{k=e_{1}}\phi(\eta-t_{j}e_{1})$ is replaced by a bump function  $\phi(10 k-e_{1}) \phi(\eta-kt_{j})$.

  By our assumption on $t_{j_{1}}-t_{j_{2}}$, the functions $\phi(\cdot - t_{j})$ are
  disjointly supported and thus
  \begin{align*}
    \|u_{0}\|_{H^{s}}^{2} & = \sum_{j} |\alpha_{j}|^{2} \|\frac{\langle \cdot \rangle^{2s}}{\langle t_{j} \rangle^{2s}} \phi(\cdot-t_{j}e_{1})\|_{L^{2}(B_{2}(0))}^{2} \\
                          &= \sum_{j} |\alpha_{j}|^{2} \|\frac{\langle \cdot-t_{j}e_{1} \rangle^{2s}}{\langle t_{j} \rangle^{2s}} \phi(\cdot)\|_{L^{2}(B_{2}(0))}^{2}.
  \end{align*}
  Similarly, for any $t \in \R$, it holds that
  \begin{align*}
    \|u(t)\|_{H^{s}}^{2} = \sum_{j} |\alpha_{j}|^{2} \|\frac{\langle (t_{j}-t)e_{1}+\cdot \rangle^{2s}}{\langle t_{j} \rangle^{2s}} \phi(\cdot)\|_{L^{2}(B_{2}(0))}^{2}.
  \end{align*}
  By our assumptions on $t_{j}$ and $\phi$, in the first sum $|\eta|\leq 2 \leq
  \frac{|t_{j}|}{2}$, and thus $\|u_{0}\|_{H^{s}}^{2}$ is comparable (within a
  factor $4^{\pm  s}$) to $\sum_{j}|\alpha_{j}|^{2}=1$. One the other hand, for
  $t=t_{j}$, the second sum is bounded from below by
  \begin{align*}
    |\alpha_{j}|^{2}\|\frac{\langle\cdot \rangle^{2s}}{\langle t_{j} \rangle^{2s}} \phi(\cdot)\|_{L^{2}(B_{2}(0))}^{2} \geq \frac{|\alpha_{j}|^{2}}{\langle t_{j} \rangle^{2s}}.
  \end{align*}
  This concludes the proof of the second statement. We note that a similar lower
  bound can also be obtained for the homogeneous Sobolev spaces by choosing
  $t=t_{j}+4$ instead.
  \\
  
  Finally, suppose that there exists $u_{0}$ attaining the algebraic decay
  rates. Expressed in terms of $u_{0}$ this implies that for a sequence
  $t_{j}\rightarrow \infty$
  \begin{align}
    \label{eq:2}
    \|\frac{t_{j}^{s}}{\langle \eta -t_{j}k \rangle^{1}\langle \eta \rangle^{s}}  \langle \eta \rangle^{s} \langle k \rangle^{-s}\tilde{u}_{0}\|_{l^{2} L^{2}}^{2} \geq C  \|\langle \eta \rangle^{s} \langle k \rangle^{-s}\tilde{u}_{0}\|_{l^{2} L^{2}}^{2}.
  \end{align}
  Since the equation decouples with respect to $k$ and for easier notation, in the following we consider
  $k$ arbitrary but fixed and omit the factors $\langle k \rangle^{-s}$.
  The result of the Theorem then follows by multiplying by $\langle k \rangle^{-s}$ and summing
  in $k$.
  
  Now let $t=t_{j}$ and consider the sets $\Omega_{C,t}=\left\{ (k,\eta):
    \frac{|t^{s}|}{\langle \eta -tk \rangle^{1}\langle \eta
      \rangle^{s}}<\sqrt{C/2} \right\}$ and $A_{C,t}$ their complements. Then
  the inequality \eqref{eq:2} implies that 
  \begin{align}
    \label{eq:3}
    \begin{split}
      \|\frac{t^{s}}{\langle \eta -t_{j}k \rangle^{1}\langle \eta \rangle^{s}}  \langle \eta \rangle^{s} \tilde{u}_{0}\|_{L^{2}(A_{C,t})}^{2} + \frac{C}{2}\| \langle \eta \rangle^{s} \tilde{u}_{0}\|_{L^{2}(\Omega_{C,t})}^{2} \geq C  \| \langle \eta \rangle^{s} \tilde{u}_{0}\|_{L^{2}(\Omega_{C,t})}^{2} \\
      \Rightarrow \|\frac{t^{s}}{\langle \eta -kt_{j} \rangle^{1}\langle \eta
        \rangle^{s}} \langle \eta \rangle^{s}
      \tilde{u}_{0}\|_{L^{2}(A_{C,t})}^{2} \geq C/2 \| \langle \eta \rangle^{s}
      \tilde{u}_{0}\|_{L^{2}}^{2}
    \end{split}
  \end{align}
  On the other hand
  \begin{align*}
    \frac{t^{s}}{\langle \eta -kt_{j} \rangle^{1}\langle \eta \rangle^{s}} \leq \max\left( \frac{t^{s}}{\langle kt_{j}/2 \rangle^{1} 1}, \frac{t^{s}}{1 \langle kt_{j}/2 \rangle^{s}} \right)\leq 2^{s} ,
  \end{align*}
  by considering $|\eta|\leq t_{j}/2$ and $|\eta|>t_{j}/2$ and using that $0 \leq s \leq
  1$.

  Hence, it follows that, for a constant depending on $s$, but independent of
  $t$,
  \begin{align*}
    \| \langle \eta \rangle^{s} \tilde{u}_{0}\|_{L^{2}(A_{C,t})}^{2} \geq C_{s} \|\langle \eta \rangle^{s} \tilde{u}_{0}\|_{L^{2}}^{2}.
  \end{align*}
  By assumption, this holds for a sequence $t_{j}\rightarrow \infty$. Upon
  passing to a subsequence, the sets $A_{C,t_{j}}$ can be ensured to be mutually
  disjoint. Hence, by orthogonality
  \begin{align*}
    \|\langle \eta \rangle^{s} \tilde{u}_{0}\|_{L^{2}}^{2} \geq \sum_{j} \| \langle \eta \rangle^{s} \tilde{u}_{0}\|_{L^{2}(A_{C,t_{j}})}^{2} \\
    \geq C_{s}\sum_{j}  \|\langle \eta \rangle^{s} \tilde{u}_{0}\|_{L^{2}}^{2} = \infty \|\langle \eta \rangle^{s} \tilde{u}_{0}\|_{L^{2}}^{2} .
  \end{align*}
  This is a contradiction unless $v=0$.
\end{proof}

As a consequence of our comparison result, Theorem \ref{thm:continuous}, we also
obtain lower bounds on the decay of the geometric mixing scale.
The following theorem instead provides a direct construction of a lower bound, 
where averages are taken over the scale $\frac{j}{t_{j}}$ instead.

\begin{thm}
  \label{thm:lower_geom}
  Let $0\leq s <\frac{1}{2}$, then there exists initial data $u_{0} \in
  L^{2}_{x}H^{s}_{y}(\T \times [0,2 \pi])$, so that along a sequence
  of times $t_{j}=2^{100+j}$ the solution $u$ of the free transport problem
  satisfies
  \begin{align*}
    \mathfrak{g}_{\frac{j}{t_{j}}} [u]\geq \frac{\|u_{0}\|_{L^{2}H^{s}}}{j \langle t_{j} \rangle^{s}}.
  \end{align*}
  That is, at scale $r=\frac{j}{t_{j}}$ we have a lower
  bound by $\frac{1}{j} \frac{1}{t_{j}^{s}}$.
\end{thm}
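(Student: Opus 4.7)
The plan is to construct $u_0$ as a superposition of very narrow wave packets, each tuned so that at the resonant time $t_j$ only the $j$-th packet survives coherently when averaged over a ball of scale $j/t_j$. Fix a smooth bump $g \in C_c^\infty(\R)$ with $\mathrm{supp}(g) \subset [-1,1]$ and $\hat g(1) \neq 0$, pick well-separated centers $y_j \in (0,2\pi)$ (say $y_j = 1 - 2^{-j}$), and set
\begin{align*}
u_0(x,y) = \sum_{j\ge 1} c_j\, e^{ix}\, g\bigl(t_j(y - y_j)\bigr), \qquad c_j = \langle t_j\rangle^{-s}.
\end{align*}
The $j$-th packet is localized in $y$ about $y_j$ with width $1/t_j = 2^{-100-j}$.

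The first step is normalization. The separation $|y_j - y_\ell| \gtrsim 2^{-\min(j,\ell)}$ vastly exceeds the packet widths $1/t_j$, so the summands are disjointly supported in $y$. Using the scaling identity $\|g(t_j\cdot)\|_{H^s}^2 \sim t_j^{2s-1}\|g\|_{\dot H^s}^2$ (valid for $s<1/2$ without any trace correction at the endpoints of $[0,2\pi]$) yields
\begin{align*}
\|u_0\|_{L^2 H^s}^2 \sim \|g\|_{\dot H^s}^2 \sum_{j\ge 1} c_j^2\, t_j^{2s-1} = \|g\|_{\dot H^s}^2 \sum_{j\ge 1} \frac{1}{t_j} = O(1).
\end{align*}

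The second step is to localize the ball average at time $t_j$. Since $u(t_j,x,y) = \sum_\ell c_\ell e^{i(x-t_j y)} g(t_\ell(y-y_\ell))$, the physical-space centers $y_\ell$ are preserved, so the ball $B := B_{j/t_j}(x_0, y_j)$ with $x_0 \in \T$ arbitrary intersects only the $\ell = j$ packet (since $j/t_j + 1/t_\ell \ll |y_j - y_\ell|$ for every $\ell \neq j$). Thus
\begin{align*}
\int_{B} u(t_j)\, dx\, dy = c_j \int_B e^{i(x - t_j y)}\, g\bigl(t_j(y - y_j)\bigr)\, dx\, dy.
\end{align*}
Changing variables $u = x - x_0$ and $\eta = t_j(y - y_j)$ reshapes $B$ into the ellipse $(t_j u)^2 + \eta^2 \le j^2$; since $|u| \le \sqrt{j^2-\eta^2}/t_j \to 0$, one may linearize $\int_{|u|\le a} e^{iu}\,du = 2\sin a \approx 2a$. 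The remaining $\eta$-integral reduces, via $\sqrt{j^2-\eta^2}\approx j$ on the support $|\eta|\le 1$ of $g$, to $j\,\hat g(1)$. Dividing by $|B| = \pi j^2/t_j^2$ gives
\begin{align*}
\frac{1}{|B|}\int_B u(t_j)\, dx\, dy \approx \frac{2\,\hat g(1)}{\pi\, j\, \langle t_j\rangle^s},
\end{align*}
and absorbing the uniform constants into $\|u_0\|_{L^2 H^s}$ (possibly after a global rescaling of $u_0$) yields the claimed lower bound.

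The hard part will be to quantify all of the above uniformly in $j$: the packet separation must dominate the ball radius and the $1/t_\ell$ widths along the whole sequence, and the approximations $\sin a \approx a$ and $\sqrt{j^2-\eta^2}\approx j$ must contribute only lower-order corrections. Both simplify dramatically thanks to the rapid growth $t_j = 2^{100+j}$, which forces $j/t_j \ll 2^{-j} \le |y_j - y_{j+1}|$ and makes every error term negligible relative to the main contribution $1/(j\langle t_j\rangle^s)$.
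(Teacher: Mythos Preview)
Your construction is correct and takes a genuinely different route from the paper. The paper builds $u_0$ on the \emph{frequency} side: it sets $u_0(x,y)=c\,e^{ix}\sum_j \frac{1}{j}\,\langle t_j\rangle^{-s}\,e^{it_j y}$, so that at time $t=t_{j_0}$ the $j_0$-th mode becomes constant in $y$ (frequency resonance), while the remaining modes keep oscillating at frequency $\gtrsim t_{j_0}$ and are suppressed by integration by parts when averaged over a square or ball of side $\gtrsim j_0/t_{j_0}$. You instead work on the \emph{physical} side: disjointly supported bumps at separated centers $y_j$, so that a ball of radius $j_0/t_{j_0}$ centered at $y_{j_0}$ sees only the $j_0$-th packet and the cross-terms vanish identically. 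The trade-off is clean: the paper gets $L^2H^s$-orthogonality of plane waves for free (a one-line norm computation) but must estimate an infinite sum of oscillatory integrals; your approach removes all cross-terms by support disjointness but pays with the scaling identity $\|g(t_j\cdot)\|_{H^s}^2\sim t_j^{2s-1}$ (which is where the restriction $s<\tfrac12$ is actually used in your argument) and with the approximations $\sin a\approx a$, $\sqrt{j^2-\eta^2}\approx j$ in the single-packet average, which are uniform only for large $j$. Both deliver the same lower bound $\sim 1/(j\langle t_j\rangle^s)$; yours is arguably more robust to perturbations of the transport dynamics since it relies on spatial separation rather than exact frequency cancellation.
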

We remark that as in the previous Theorem \ref{thm:mixing}, $t_{j}$ can be
chosen to be increasing more rapidly and thus $\frac{1}{j}=o(1)$ as
$t_{j}\rightarrow \infty$ can be
chosen with very slow decay. Furthermore, the construction of our proof also
extends to the $n$ dimensional transport equations by extending
constantly in the other directions.

\begin{proof}[Proof of Theorem \ref{thm:lower_geom}]
  Consider the function
  \begin{align*}
    u_{0}(x,y)=c e^{ix}\sum_{j=1}^{\infty}\frac{1}{j} \frac{e^{i t_{j}y}}{\langle t_{j} \rangle^{s}},
  \end{align*}
  as a functions on $\T \times [0,2\pi]$, where $c \in (\frac{1}{100},100)$ can
  be chosen such that this function is normalized since $\frac{1}{j} \in l^{2}$.

  Then, the solution $u$ of the free transport problem is explicitly given by
  \begin{align*}
    u(t,x,y)=c e^{ix}\sum_{j=1}^{\infty}\frac{1}{j} \frac{e^{i(t_{j}-t)y}}{\langle t_{j} \rangle^{s}},
  \end{align*}

  For simplicity of notation and presentation we first consider averages over
  squares instead of balls, which allows for a simpler straightforward
  calculation. An extension to the latter setting is given at the end of this
  proof. Let thus $t=t_{j_{0}}$ and consider a square $S=I_{x} \times I_{y}$ of
  side length $\frac{1}{100}> d>\frac{j_{0}}{t_{j_{0}}}$, which is centered
  close to a point where $e^{ix}=1$ Then the integral of $u$ over this square
  decouples by Fubini's theorem and we may compute that
  \begin{align*}
    \frac{1}{d}\int_{I_{x}} e^{ix} dx \approx 1
  \end{align*}
  and that
  \begin{align*}
    &\quad \frac{1}{d}\int_{I_{y}}  \sum_{j=1}^{\infty}\frac{1}{j} \frac{\cos((t_{j}-t)y)}{\langle t_{j} \rangle^{s}} dy
    \\ &= \frac{1}{d}\int_{I_{y}} \frac{1}{j_{0}\langle t_{j} \rangle^{s}} + \frac{1}{d} \sum_{j \neq j_{0}} \int_{I_{y}} \frac{1}{j} \frac{e^{i(t_{j}-t_{j_{0}})y)}}{\langle t_{j} \rangle^{s}}.
  \end{align*}
  We note that as an average over a constant function, the first term equals
  \begin{align}
    \label{eq:1}
    \frac{1}{j_{0}\langle t_{j} \rangle^{s}} \approx \frac{1}{j_{0}2^{j_{0}s}}.
  \end{align}
  On the other hand, by construction of $t_{j}=2^{100+j}$, for each $j \neq
  j_{0}$, $|t_{j}-t_{j_{0}}| \geq \frac{1}{2} \max (t_{j}, t_{j_{0}})$ and thus
  all further integrands are highly oscillatory. In particular,
  \begin{align*}
    \left| \int_{S_{y}} \frac{1}{j} \frac{\cos((t_{j}-t_{j_{0}})y)}{\langle t_{j} \rangle^{s}} \right|
    \leq \frac{1}{j \langle t_{j} \rangle^{s} |t_{j}-t_{j_{0})}|} \leq \frac{1}{j 2^{js} |2^{j}-2^{j_{0}}|}.
  \end{align*}
  where we used integration by parts. Considering the sum in $j$, we split into
  \begin{align*}
    \sum_{j<j_{0}} \frac{1}{j 2^{js} |2^{j}-2^{j_{0}}|} \leq \frac{2}{2^{j_{0}}} \sum_{j}\frac{1}{j 2^{js}} \leq \frac{c_{s}}{2^{j_{0}}}.
  \end{align*}
  and
  \begin{align*}
    \sum_{j>j_{0}} \frac{1}{j 2^{js} |2^{j}-2^{j_{0}}|} \leq 2^{-j_{0}s} \sum _{j>j_{0}}\frac{2}{j 2^{j}}  \leq 2^{-j_{0}s} 2^{-j_{0}}. 
  \end{align*}
  Dividing by $d>\frac{j2^{-j_{0}}}{100 c_{s}}$, both terms will be smaller than
  the term in \eqref{eq:1} by a large factor and hence
  \begin{align*}
    \frac{1}{|S|} \int_{S} u(t_{j}) \geq c \frac{1}{j_{0}\langle t_{j} \rangle^{s}},
  \end{align*}
  as claimed.
  \\

  Let us next consider the original problem of averages over balls. In this case
  the integrals
  \begin{align*}
    \frac{1}{j\langle t_{j} \rangle^{s}}  \frac{1}{|B_{d}|}\int_{B_{d}}  e^{ix} e^{i(t_{j}-t)y}
  \end{align*}
  can be explicitly computed in terms of Bessel functions. That is, if the
  center of the ball is the point $(\xi_{1}, \xi_{2})$, then after translating
  in $x$ and $y$, we obtain an exponential factor $e^{i\xi_{1}+
    i(t_{j}-t)\xi_{2}}$ and an integral over a ball centered in $(0,0)$. We
  hence, need to compute
  \begin{align*}
    \int_{B_{d}} e^{ix (1, t_j-t)} dx = d^{n} \int_{B_{1}}e^{ix (d, d(t_{j}-t))} dx.
  \end{align*}
  That is, the Fourier transform of the indicator function of a ball.

  Using the rotation-invariance of $B_{1}$, we compute
  \begin{align*}
    & \quad \int_{B_{1}} e^{ix \xi} dx = \int_{B_{1}} e^{ix_{1}|\xi|} dx_{1}dx_{2} \\
    &= \int_{-1}^{1} \sqrt{1-x_{1}^{2}} e^{ix_{1}|\xi|} dx_{1} 
      =c \frac{J_{1}(|\xi|)}{|\xi|},
  \end{align*}
  where $J_{1}$ denotes the Bessel function of the first kind and $c\leq 10$. It
  hence follows that
  \begin{align*}
    d^{-n}\int_{B_{d}} e^{ix (1, t_j-t)} dx \leq C_{d} \min \left(1, \frac{C}{d\sqrt{1+|t_{j}-t|^{2}}}\right).
  \end{align*}
  and that
  \begin{align*}
    d^{-n}\int_{B_{d}} e^{ix (1, t_j-t)} dx \approx 1 
  \end{align*}
  if $t_{j}-t$ is small. Thus, the above estimates for squares extend to this
  case in a straightforward way.
\end{proof}

\subsection{On Lower Bounds for Mixing Costs}
\label{sec:mixing_costs}
Consider again the passive scalar problem
\begin{align}
  \label{eq:10}
  \begin{split}
  \dt \rho + v\cdot \nabla \rho&=0,\\ \nabla \cdot \rho&=0.
  \end{split}
\end{align}
In the previous section we considered $v$ as given and asked about decay rates
of mixing scales for $\rho_{0} \in H^{s}$ to be chosen freely.

As a related and in a sense inverse problem, one can ask about \emph{mixing
  costs}. That is, you are given an explicit initial datum $\rho_{0} \in H^{s}$
and want it to be mixed to scale $\epsilon$ by time $1$. What kind of lower
bound does this imply on Sobolev norms of $v$ in space and time?

More precisely, the aim is to a establish a lower bound of the type
\begin{align*}
  \int_{0}^{1} \|v\|_{W^{1,p}}dt \geq C_{p} |\log(\epsilon)|,
\end{align*}
when $\rho(1)$ is geometrically mixed to scale $\epsilon$.
The case $p>1$ has been established in \cite{crippa2008estimates} and
the case $p=1$ is a conjecture of Bressan, \cite{bressan2003lemma}.

As an application of our comparison results, we consider the simplest case of
$p=\infty$, following the proof in \cite{crippa2008estimates} via Gronwall's
estimate.

\begin{lem}
  \label{lem:Linfty}
  Let $1>\epsilon>0$ and $\rho$ be a solution of \eqref{eq:10} on $R^{n}$ be such that
  \begin{align*}
    \|\rho|_{t=0}\|_{H^{-1}}=1, \quad \|\rho|_{t=1}\|_{H^{-1}}=\epsilon>0,
  \end{align*}
  with $v \in W^{1,\infty}$.
  Then it holds that
  \begin{align*}
    \int_{0}^{1} \|\p_{i}v_{j}+\p_{j}v_{i}\|_{L^{\infty}} dt \geq C |\log(\epsilon)|.
  \end{align*}
\end{lem}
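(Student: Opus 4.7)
The idea is to derive a logarithmic Gronwall inequality
\[
\left|\tfrac{d}{dt}\log\|\rho(t)\|_{H^{-1}}^{2}\right| \leq C\,\|\partial_{i}v_{j}+\partial_{j}v_{i}\|_{L^{\infty}},
\]
and integrate from $t=0$ to $t=1$ so that $2|\log\epsilon|$ appears on the left and the stirring integral on the right. This is the Crippa--De Lellis strategy from \cite{crippa2008estimates} adapted to the endpoint $p=\infty$.

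First, I would introduce the potential $\psi(t,\cdot):=(-\Delta)^{-1}\rho(t,\cdot)$, so that $\|\rho\|_{H^{-1}}^{2}=\|\nabla\psi\|_{L^{2}}^{2}=\int\rho\,\psi\,dx$. Rewriting \eqref{eq:10} as the conservation law $\partial_{t}\rho+\nabla\cdot(v\rho)=0$ (using $\nabla\cdot v=0$) and pairing with $\psi$,
\begin{align*}
\tfrac{d}{dt}\|\rho\|_{H^{-1}}^{2} = 2\int\psi\,\partial_{t}\rho\,dx = 2\int (v\rho)\cdot\nabla\psi\,dx = -2\int (v\cdot\nabla\psi)\,\Delta\psi\,dx.
\end{align*}

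Second, a further integration by parts together with the cancellation $\int v\cdot\nabla(|\nabla\psi|^{2}/2)\,dx=0$ (again from $\nabla\cdot v=0$) yields the key identity
\begin{align*}
\tfrac{d}{dt}\|\rho\|_{H^{-1}}^{2} = 2\int (\partial_{j}v_{i})(\partial_{i}\psi)(\partial_{j}\psi)\,dx = \int (\partial_{i}v_{j}+\partial_{j}v_{i})(\partial_{i}\psi)(\partial_{j}\psi)\,dx,
\end{align*}
where the symmetrization is automatic because $\partial_{i}\psi\,\partial_{j}\psi$ is a symmetric tensor and so only the symmetric part of $\nabla v$ contributes. Estimating pointwise in $x$,
\begin{align*}
\left|\tfrac{d}{dt}\|\rho\|_{H^{-1}}^{2}\right| \leq C\,\|\partial_{i}v_{j}+\partial_{j}v_{i}\|_{L^{\infty}}\,\|\nabla\psi\|_{L^{2}}^{2} = C\,\|\partial_{i}v_{j}+\partial_{j}v_{i}\|_{L^{\infty}}\,\|\rho\|_{H^{-1}}^{2}.
\end{align*}

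Third, since $\|\rho(1)\|_{H^{-1}}=\epsilon>0$, applying the same estimate backward in time shows $\|\rho(t)\|_{H^{-1}}>0$ throughout $[0,1]$, so $\log\|\rho(t)\|_{H^{-1}}^{2}$ is well-defined. Dividing through and integrating gives
\begin{align*}
2|\log\epsilon| = \left|\log\|\rho(1)\|_{H^{-1}}^{2}-\log\|\rho(0)\|_{H^{-1}}^{2}\right| \leq C\int_{0}^{1}\|\partial_{i}v_{j}+\partial_{j}v_{i}\|_{L^{\infty}}\,dt,
\end{align*}
which is the claim. The main obstacle is not the algebra but rigorously justifying the chain rule for $\|\rho(t)\|_{H^{-1}}^{2}$ and the above integrations by parts at the hypothesized regularity $v\in W^{1,\infty}$; I would do this by regularizing $v$ via a mollification $v_{\varepsilon}$, running the computation for the smooth solutions $\rho_{\varepsilon}$, and passing to the limit using the DiPerna--Lions stability of renormalized transport solutions (which gives convergence in $L^{2}$ and hence in $H^{-1}$). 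A minor cosmetic point is that the quantity on the right-hand side of the lemma can denote either the operator norm of $D(v)=\tfrac{1}{2}(\nabla v+\nabla v^{T})$ or the pointwise maximum over $(i,j)$; these differ only by a dimensional constant which may be absorbed into $C$.
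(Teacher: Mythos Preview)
Your proof is correct and reaches the same Gronwall inequality as the paper, but by a slightly different route. The paper proceeds by duality: since the transport solution operator $S(t_{2},t_{1})$ is unitary on $L^{2}$, one has
\[
\|\rho(0)\|_{H^{-1}}=\sup_{\|\phi\|_{H^{1}}\leq 1}\int (S(1,0)\phi)\,\rho(1)\,dx\leq \|S(1,0)\|_{H^{1}\to H^{1}}\,\|\rho(1)\|_{H^{-1}},
\]
so that $\|S(1,0)\|_{H^{1}\to H^{1}}\geq 1/\epsilon$. The paper then controls the growth of the $H^{1}$ norm along the flow by differentiating $\|\nabla\phi\|_{L^{2}}^{2}$ for a transported test function $\phi$, arriving at exactly the same symmetric-gradient estimate you derive. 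In your version you differentiate $\|\nabla\psi\|_{L^{2}}^{2}$ with $\psi=(-\Delta)^{-1}\rho$ instead; the algebraic identity---that only the symmetric part of $\nabla v$ contributes---is identical in both computations. Your approach is more direct, avoiding the duality detour, while the paper's has the modular advantage of isolating the operator-norm bound $\|S(1,0)\|_{H^{1}\to H^{1}}\leq \exp\bigl(\int_{0}^{1}\|\partial_{i}v_{j}+\partial_{j}v_{i}\|_{L^{\infty}}\,dt\bigr)$, which is a statement of independent interest.
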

\begin{proof}
  Since $v$ is divergence-free the solution operator $S(t_{2},t_{1})$ mapping
  $\rho|_{t_{1}}$ to $\rho|_{t_{2}}$ is unitary.
  Characterizing the $H^{-1}$ norm via duality we hence obtain that
  \begin{align*}
    \|\rho|_{t=0}\|_{H^{-1}} = \sup_{\|\psi\|_{H^{1}}\leq} \int \psi \rho_{t=0} = \sup_{\|\psi\|_{H^{1}}\leq}\int \psi \rho_{t=0} \\
    = \sup_{\|\psi\|_{H^{1}}\leq}\int (S(1,0) \psi)\rho_{t=1} \leq \|S(1,0)\|_{H^{1}\rightarrow H^{1}\|} \|\rho_{t=1}\|_{H^{-1}},
  \end{align*}
  and thus
  \begin{align}
    \label{eq:11}
    \|S(1,0)\|_{H^{1}\rightarrow H^{1}\|} \geq \frac{1}{\epsilon}.
  \end{align}
  On the other hand, $S(1,0)$ conserves the $L^{2}$ norm and $\p_{j} \rho$
  satisfies
  \begin{align*}
    \dt \p_{j}\rho + v \cdot \nabla \p_{j} \rho + (\p_{j}v)\cdot \nabla \rho =0. 
  \end{align*}
  Testing against $\rho_{j}$ we thus obtain that
  \begin{align*}
    \frac{d}{dt}\|\nabla \rho\|_{L^{2}}^{2} \leq 2 \sum_{i,j} \int  (\p_{j}\rho) (\p_{j}v_{i}) (\p_{i}\rho) \\
    = \sum_{i,j} \int  (\p_{j}\rho) (\p_{j}v_{i}+\p_{i}v_{j}) (\p_{i}\rho) \leq \|\p_{j}v_{i}+\p_{i}v_{j}\|_{L^{\infty}} \|\nabla \rho\|_{L^{2}}^{2}.
  \end{align*}
  Gronwall's inequality thus implies that
  \begin{align*}
    \|S(1,0)\|_{H^{1}\rightarrow H^{1}} \leq \exp \left( \int_{0}^{1} \|\p_{j}v_{i}+\p_{i}v_{j}\|_{L^{\infty}} dt \right),
  \end{align*}
  which in combination with the inequality \eqref{eq:11} concludes the proof.
\end{proof}

As a corollary we obtain a lower bound on the geometric scale. While our
comparison estimates of Section \ref{sec:proofs} can not be expected to be
optimal due the different time dependence, we remark that lower bounds in terms
of powers of $\epsilon$ yield the same logarithmic lower bounds.
Hence, we may consider the assumptions of the following corollary to be
equivalent to those of Lemma \ref{lem:Linfty} for our purposes.
\begin{cor}
  Let $1>\epsilon >0$ and $\rho$ be a solution of \eqref{eq:10} on $\R^{n}$ with
  $v \in W^{1,\infty}(\R^{n})$.
  Suppose that $\|\rho|_{t=0}\|_{H^{-1}}=1$ and that $\rho|_{t=1}$ is
  supported in $B_{1}$ and
  \begin{align*}
    \mathcal{G}_{\epsilon}[\rho|_{t=1}]\leq \epsilon.
  \end{align*}
  Then it follows that
  \begin{align*}
     \int_{0}^{1} \|\p_{i}v_{j}+\p_{j}v_{i}\|_{L^{\infty}} dt \geq C |\log(\epsilon)|.
  \end{align*}
\end{cor}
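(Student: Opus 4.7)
The plan is to combine the reverse comparison estimate from Theorem \ref{thm:main} (equation \eqref{eq:5}) with the logarithmic lower bound of Lemma \ref{lem:Linfty}. The key observation is that the geometric mixing hypothesis $\mathcal{G}_{\epsilon}[\rho|_{t=1}] \leq \epsilon$ can be translated into an $H^{-1}$ bound at $t=1$ with only a multiplicative constant loss, and such a constant becomes an additive loss after taking a logarithm, which is harmless.

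\smallskip

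First, I would unpack the definition: by monotonicity of $r \mapsto \mathfrak{g}_r[\rho|_{t=1}]$, the assumption $\mathcal{G}_{\epsilon}[\rho|_{t=1}] \leq \epsilon$ means
\[
\mathfrak{g}_{\epsilon}[\rho|_{t=1}] \leq \epsilon \, \|\rho|_{t=1}\|_{L^\infty}.
\]
Since the passive scalar equation \eqref{eq:10} is transported by a divergence-free field, all $L^p$ norms are preserved. Writing $M := \|\rho_0\|_{L^\infty} = \|\rho|_{t=1}\|_{L^\infty}$ and $N := \|\rho_0\|_{L^2} = \|\rho|_{t=1}\|_{L^2}$, both are finite constants depending only on the initial data, independent of $\epsilon$.

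\smallskip

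Second, I would apply the converse estimate \eqref{eq:5} of Theorem \ref{thm:main} to $\rho|_{t=1}$, which is compactly supported in $B_1$. This gives
\[
\|\rho|_{t=1}\|_{H^{-1}} \leq C\, \mathfrak{g}_{\epsilon}[\rho|_{t=1}] + C\,\epsilon\,\|\rho|_{t=1}\|_{L^2} \leq C(M+N)\,\epsilon =: \epsilon',
\]
where $C$ depends only on a neighborhood of $B_1$. So we obtain an analytic mixing estimate $\|\rho|_{t=1}\|_{H^{-1}} \leq \epsilon'$ with $\epsilon' = C_0 \epsilon$ for some $C_0 = C_0(M,N)$ independent of $\epsilon$.

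\smallskip

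Third, I would invoke Lemma \ref{lem:Linfty}, whose hypotheses are precisely $\|\rho|_{t=0}\|_{H^{-1}} = 1$ and $\|\rho|_{t=1}\|_{H^{-1}} = \epsilon'$ for some $\epsilon'>0$. It yields
\[
\int_0^1 \|\partial_i v_j + \partial_j v_i\|_{L^\infty}\, dt \;\geq\; C\,|\log \epsilon'| \;=\; C\,|\log(C_0\epsilon)| \;\geq\; C\,|\log \epsilon| - C\,\log C_0.
\]
For $\epsilon$ smaller than some threshold depending on $C_0$ (equivalently, on $M$ and $N$), the right-hand side is bounded below by $\tfrac{C}{2}|\log\epsilon|$; for the remaining $\epsilon$ in a bounded range away from zero, the estimate $\int_0^1 \|\cdot\|_{L^\infty}\,dt \geq C'|\log\epsilon|$ is trivial after adjusting the constant. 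Absorbing everything into a single constant $C$ concludes the proof.

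\smallskip

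The only point requiring care — and what I would consider the main (mild) obstacle — is verifying that the constant $C_0 = C(M+N)$ produced by \eqref{eq:5} really depends only on the fixed initial data and on the support, and not on $\epsilon$; this is immediate once one uses conservation of the $L^p$ norms under the transport flow, so the loss between $\epsilon$ and $\epsilon'$ is purely multiplicative and therefore logarithmically benign.
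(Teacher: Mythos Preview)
Your proposal is correct and follows essentially the same route as the paper: convert the geometric mixing hypothesis on $\rho|_{t=1}$ into an $H^{-1}$ bound via the comparison estimate, then apply Lemma~\ref{lem:Linfty}. The paper's proof is a one-line reference to Theorem~\ref{thm:continuous}, whereas you unwind that reference to the underlying estimate~\eqref{eq:5} of Theorem~\ref{thm:main} and are more explicit about the role of the conserved $L^2$ and $L^\infty$ norms; this is the same argument spelled out in greater detail.
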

\begin{proof}
  By Theorem \ref{thm:continuous}, $\rho$ also satisfies the assumptions of
  Lemma \ref{lem:Linfty}, which implies the result.
\end{proof}

For the case $p>1$, in \cite{crippa2008estimates} Crippa and De Lellis obtain the following mixing
cost result. Unlike the setting $p=\infty$ this seminal result requires
considerable effort to prove. In subsequent works we intend to study whether the
comparability can be used to simplify steps of this proof. For this article, we
only state a simple corollary of the established results.
\begin{thm}[Theorem 6.2 in \cite{crippa2008estimates}]
  Let $p>1$ and $\rho|_{t=0}= 1_{[0,1/2]}(x_{2}) \in L^{1}(\T^{2})$ and suppose
  that for $\epsilon>0$ and some $0< \kappa < \frac{1}{2}$ the solution of \eqref{eq:10} satisfies
  \begin{align*}
    \kappa \leq \frac{1}{B_{\epsilon}}\int_{B_{\epsilon}} \rho|_{t=1} \leq 1-\kappa. 
  \end{align*}
  Then there exits a constant $C$ such that
  \begin{align}
    \label{eq:12}
    \int_{0}^{1}\|\nabla v \|_{L^{p}} dt \geq C |\log(\epsilon)|
  \end{align}
  for every $0<\epsilon<\frac{1}{4}$.
\end{thm}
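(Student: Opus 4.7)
The plan is to follow the Eulerian-to-Lagrangian reduction and the quantitative flow estimate of Crippa--De Lellis. First, I would pass from the passive scalar formulation to the Lagrangian picture: since $v$ is divergence-free and lies in $W^{1,p}$, the DiPerna--Lions/Ambrosio theory provides a regular Lagrangian flow $X(t,x)$ such that $\rho(t,X(t,x))=\rho_0(x)$ almost everywhere. Under this pushforward the initial interface $\{x_2 = 1/2\}$ is transported to $X(1,\{x_2 = 1/2\})$, and the local average hypothesis $\kappa \leq |B_\epsilon|^{-1}\int_{B_\epsilon(x_0)} \rho(1) \leq 1-\kappa$ translates into the statement that the preimage $X(1,\cdot)^{-1}(B_\epsilon(x_0))$ contains substantial mass from \emph{both} half-strips $\{x_2<1/2\}$ and $\{x_2\geq 1/2\}$, with proportions bounded away from $0$ and $1$ uniformly in $\epsilon$.

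Next I would invoke the central quantitative estimate of Crippa--De Lellis: for $p>1$, divergence-free $v\in L^1_tW^{1,p}_x$, and $0<\delta<1$, the regular Lagrangian flow satisfies an integral log-Lipschitz bound of the form
\begin{equation*}
\int_{B_R}\log\!\left(1+\frac{|X(1,x)-X(1,y)|}{\delta}\right)dx \leq C_{p,R}\left(1+\int_0^1 \|\nabla v\|_{L^p}\,dt\right)
\end{equation*}
for almost every $y\in B_R$ (with $R$ adapted to $\T^2$). The proof of this estimate is where the real work sits, and would constitute the main obstacle: one must control the quantity $\Phi_\delta(t)=\int\log(1+|X(t,x)-X(t,y)|/\delta)\,dx$ by differentiating in $t$, recognizing the maximal function $M(\nabla v)$ in the resulting difference quotient, and closing the Gronwall loop using the $L^p\to L^p$ boundedness of $M$ together with the volume-preservation of $X$.

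With the log-estimate in hand, I would conclude by contradiction with the mixing hypothesis. Since the flow must carry points from opposite sides of the interface into the same ball $B_\epsilon(x_0)$, and this happens on a set of positive measure of $x_0$, one extracts a pair $(x,y)$ with $|x-y|\gtrsim 1$ (distance across the interface in the original configuration) yet $|X(1,x)-X(1,y)|\lesssim \epsilon$. Feeding this into the log-Lipschitz bound at scale $\delta=\epsilon$ yields
\begin{equation*}
|\log \epsilon| \;\lesssim\; \log\!\left(1+\frac{1}{\epsilon}\right) \;\lesssim\; 1+\int_0^1 \|\nabla v\|_{L^p}\,dt,
\end{equation*}
which rearranges to the claim. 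The measure-theoretic step extracting the separated pair from the Eulerian mixing condition requires care, since the hypothesis is stated only at a \emph{single} point $x_0$ in the statement but in fact must propagate to a positive-measure family in order for the contradiction to engage; this is handled by Chebyshev-type arguments on the integral form of the log estimate, together with the fact that $X(1,\cdot)$ is measure-preserving. The rest is routine bookkeeping to track the dependence of constants on $p$ and $\kappa$.
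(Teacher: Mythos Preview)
The paper does not prove this theorem at all. It is stated explicitly as Theorem~6.2 of Crippa--De~Lellis \cite{crippa2008estimates}, and the surrounding text says: ``Unlike the setting $p=\infty$ this seminal result requires considerable effort to prove. In subsequent works we intend to study whether the comparability can be used to simplify steps of this proof. For this article, we only state a simple corollary of the established results.'' There is therefore no proof in the paper to compare your proposal against; the result is simply imported from the cited reference and then used as a black box in Corollary~\ref{cor:mixing_cost}.

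Your sketch is, in broad strokes, the original Crippa--De~Lellis argument: reduce to the Lagrangian flow, prove the quantitative log-Lipschitz estimate via the maximal function and Gronwall, and then exploit the mixing hypothesis to force $|\log\epsilon|$ to appear on the left-hand side. Two remarks on the details. First, the form of the quantitative estimate you wrote is not quite the one in \cite{crippa2008estimates}; there the controlled functional involves a supremum over radii of averaged difference quotients, roughly
\[
\int \sup_{0<r<R}\,\log\!\left(\frac{1}{r}\,\frac{1}{|B_r|}\int_{B_r(x)}|X(t,x)-X(t,y)|\,dy + 1\right)dx,
\]
rather than a bound pointwise in a fixed $y$. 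This distinction matters for the extraction step. Second, you are right that the hypothesis must hold for \emph{all} balls of radius $\epsilon$, not a single one; the statement in the paper is slightly informally written in this regard, but that is the intended reading and is what the Crippa--De~Lellis proof uses.
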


\begin{cor}
  \label{cor:mixing_cost}
  Let $p>1$ and $\rho|_{t=0}= 1_{[0,1/2]}(x_{2}) \in L^{1}(\T^{2})$  and suppose
  that for $\epsilon>0$ and some $0< \kappa < \frac{1}{2}$ the solution of \eqref{eq:10} satisfies
  \begin{align*}
    \|\rho|_{t=1}-\frac{1}{2}\|_{H^{-1}} \leq \epsilon.
  \end{align*}
  Then inequality \eqref{eq:12} holds.
\end{cor}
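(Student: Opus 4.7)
The plan is to reduce the claim to the Crippa--De Lellis mixing cost theorem stated just above, by using Theorem \ref{thm:continuous} to convert the analytic mixing hypothesis into a geometric one. Set $\tilde\rho:=\rho|_{t=1}-\tfrac{1}{2}$. Since the transport equation preserves the pointwise range of its initial data, $\rho|_{t=1}$ still takes values in $\{0,1\}$, hence $\tilde\rho\in\{-\tfrac{1}{2},\tfrac{1}{2}\}$ and in particular $\|\tilde\rho\|_{L^{2}},\|\tilde\rho\|_{L^{\infty}}\leq \tfrac{1}{2}$, while by assumption $\|\tilde\rho\|_{H^{-1}}\leq \epsilon$. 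After a harmless rescaling by a factor of $2$, the hypotheses of Theorem \ref{thm:continuous} are in force.

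The next step is to invoke Theorem \ref{thm:continuous}(1) in dimension $n=2$, for which the exponent is $\beta=\tfrac{2}{n+4}=\tfrac{1}{3}$. This yields
\begin{align*}
  \mathfrak{g}_{\epsilon'}[\tilde\rho]\leq C\epsilon' \quad\text{for every }\epsilon'\geq \epsilon^{1/3}.
\end{align*}
Choosing $\epsilon'=\epsilon^{1/3}$ and translating back, for every ball $B_{\epsilon^{1/3}}(\xi)\subset\T^{2}$ the average of $\rho|_{t=1}$ differs from $\tfrac{1}{2}$ by at most $C\epsilon^{1/3}$. Once $\epsilon$ is smaller than some absolute threshold $\epsilon_{0}$, this forces
\begin{align*}
  \tfrac{1}{4}\leq \frac{1}{|B_{\epsilon^{1/3}}|}\int_{B_{\epsilon^{1/3}}(\xi)}\rho|_{t=1}\,dx\leq \tfrac{3}{4},
\end{align*}
which is precisely the hypothesis of the Crippa--De Lellis theorem at scale $\epsilon^{1/3}$ with $\kappa=\tfrac{1}{4}$. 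Applying that theorem then gives
\begin{align*}
  \int_{0}^{1}\|\nabla v\|_{L^{p}}\,dt\geq C\bigl|\log(\epsilon^{1/3})\bigr|=\tfrac{C}{3}|\log\epsilon|,
\end{align*}
and for $\epsilon\geq \epsilon_{0}$ the conclusion holds trivially by adjusting the constant.

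The main technical point to verify is that Theorem \ref{thm:continuous}, as stated on $\R^{n}$ for a compactly supported function, also applies to our $\tilde\rho\in L^{2}(\T^{2})$. This is a minor issue: the proof in Section \ref{sec:proofs} rests on Lemma \ref{lem:H1bygm} (a Fourier-multiplier bound) and duality, both of which carry over verbatim to the torus with dimensional constants depending only on the fixed torus size and not on $\epsilon$. Alternatively one may argue by a standard cutoff/extension, noting that a fundamental domain of $\T^{2}$ can be contained in a ball of fixed radius. The rest of the argument is purely an application of the comparison theorem combined with the known mixing cost bound, and only incurs the harmless factor of $\tfrac{1}{3}$ in the logarithm.
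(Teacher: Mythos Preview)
Your overall strategy is the same as the paper's: convert the analytic mixing hypothesis into a geometric one via the comparison results, then feed that into the Crippa--De Lellis theorem and absorb the polynomial exponent into the logarithm. That part is fine.

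There is, however, a technical gap that the paper's own proof takes care to avoid. You invoke Theorem~\ref{thm:continuous} with the exponent $\beta=\tfrac{2}{n+4}=\tfrac{1}{3}$, but the version of that theorem proved in Section~\ref{sec:proofs} (and the estimate \eqref{eq:4} on which it rests) requires $\phi\in H^{1}$. The Crippa--De Lellis hypothesis is phrased in terms of averages over balls, i.e.\ $\phi=c\,1_{B_{1}}$, which lies in $H^{\lambda}$ only for $\lambda<\tfrac{1}{2}$; see Remark~\ref{rem:regularity_assumptions} and Lemma~\ref{lem:withFouriersupport}. So the exponent $\tfrac{1}{3}$ is not available for the functional you actually need. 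The paper's proof handles this by applying Theorem~\ref{thm:main} directly with some $\lambda<\tfrac{1}{2}$, obtaining a bound of the form $\mathfrak{g}_{\epsilon^{\alpha}}[\tilde\rho]\leq C\epsilon^{\text{positive}}$ for an admissible range of $\alpha$, and then proceeding exactly as you do. Your discussion of the $\R^{n}$ versus $\T^{2}$ issue is reasonable, but it misses the point that the obstruction is the regularity of $\phi$, not the domain.

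The fix is immediate: replace your appeal to Theorem~\ref{thm:continuous} by Theorem~\ref{thm:main} with $\lambda<\tfrac{1}{2}$. You will get a smaller exponent than $\tfrac{1}{3}$, but since it only enters the final bound as a multiplicative constant in front of $|\log\epsilon|$, the conclusion is unaffected.
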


  \begin{proof}
  Theorem \ref{thm:main} implies that for $0<\alpha<\frac{1}{2}$
  \begin{align*}
    \left| \frac{1}{B_{\epsilon^{\alpha}}}\int_{B_{\epsilon^{\alpha}}} \rho|_{t=1} -\frac{1}{2} \right| \leq C \epsilon^{1/2-\alpha},
  \end{align*}
  where the upper bound on $\alpha$ is due to the regularity of $1_{B_{1}}$ as
  discussed in Remark \ref{rem:regularity_assumptions}.
  Denoting $\delta:=C \epsilon^{1/2-\alpha}$ and adding $\frac{1}{2}$, we thus
  obtain
  \begin{align*}
    \frac{1}{2}-\delta \leq \frac{1}{B_{\epsilon^{\alpha}}}\int_{B_{\epsilon^{\alpha}}} \rho|_{t=1} \leq \frac{1}{2} +\delta.
  \end{align*}
  Thus, we may apply the Theorem of Crippa and De Lellis with
  $\kappa \leq \frac{1}{2}-\delta$ and $\epsilon^{\alpha}$ to obtain that
  \begin{align*}
    \int_{0}^{1}\|\nabla v \|_{L^{p}} dt \geq C |\log(\epsilon^{\alpha})| = C \alpha  |\log(\epsilon^{\alpha})| .
  \end{align*}
    
  \end{proof}

\bibliographystyle{alpha}
\bibliography{citations,citations1}

\end{document}